\numberwithin{equation}{section}
\newtheorem{thm}{Theorem}[section]
\newtheorem{lemma}[thm]{Lemma}
\newtheorem{prop}[thm]{Proposition}
\newtheorem{cor}[thm]{Corollary}
\theoremstyle{definition}
\newtheorem{defn}[thm]{Definition}
\theoremstyle{remark}
\newtheorem{eg}[thm]{Example}
\newtheorem{rmk}[thm]{Remark}
\newcommand{\supp}{\operatorname{supp}}
\newcommand{\lsp}{\operatorname{span}}
\newcommand{\id}{\operatorname{id}}
\newcommand{\Ind}{\mathrm{Ind}}
\title{Twisted Topological Graph Algebras}
\date{30 April 2014}
\author{Hui Li}
\email{hl338@uowmail.edu.au}
\address{School of Mathematics and Applied Statistics \\
Building 39C\\
University of Wollongong\\
Wollongong NSW 2522\\
AUSTRALIA}
\subjclass[2010]{46L05}
\keywords{$C^*$-algebra; $C^*$-correspondence; topological graph; sheaf cohomology; Cuntz-Pimsner algebra}
\thanks{This research was supported by the Research Career Launch Scholarship at University of Wollongong}
\begin{document}

\begin{abstract}
We define the notion of a twisted topological graph algebra associated to a topological graph and a $1$-cocycle on its edge set. We prove a stronger version of a Vasselli's result. We expand Katsura's results to study twisted topological graph algebras. We prove a version of the Cuntz-Krieger uniqueness theorem, describe the gauge-invariant ideal structure. We find that a twisted topological graph algebra is simple if and only if the corresponding untwisted one is simple.
\end{abstract}

\maketitle

\section{Introduction}

Since the foundational work in \cite{CuntzKrieger:IM80, EnomotoWatatani:MJ80, KumjianPaskEtAl:PJM98, KumjianPaskEtAl:JFA97}, directed graph algebras have been studied very extensively over the last twenty years. Many properties like the ideal structure and the $K$-theory of graph algebras can be read off from the underlying graph (see \cite{Raeburn:Graphalgebras05} for a detailed introduction of graph algebras). Graph algebras have provided very illustrative examples for the development of the classification theory of $C^*$-algebras. For example, it was shown in \cite{DrinenTomforde:RMJM05, KumjianPaskEtAl:PJM98} that simple graph algebras are either AF or purely infinite so they are classifiable up to isomorphism by the K-theory. For two directed graphs whose graph algebras are simple unital, S{\o}rensen recently in \cite{MR3082546} showed how to decide exactly when these two graphs determine stable isomorphic graph algebras.

Various versions of continuous graph algebras have been studied by many authors. For example, Deaconu in \cite{MR1233967} investigated the groupoid $C^*$-algebra associated with a local homeomorphism. In \cite{Katsura:TAMS04}, Katsura defined the concept of a topological graph and associated a topological graph algebra to each topological graph by modifying Pimsner's construction in \cite{Pimsner:FIC97} of a $C^*$-algebra from a $C^*$-correspondence. Topological graph algebras include all graph algebras, all homeomorphism algebras, all AF-algebras, and many other examples. One of the famous results about topological graph algebras \cite[Theorem~C]{Katsura:JFA08} states that every Kirchberg algebra is isomorphic to a topological graph algebra. Muhly and Tomforde in \cite{MuhlyTomforde:IJM05} have subsequently considered the $C^*$-algebra associated to a topological quiver which is a further generalization of a topological graph algebra.

There are many interesting examples of twisted $C^*$-algebras, which incorporate suitable cohomological data into existing constructions of $C^*$-algebras. People are interested in twisted $C^*$-algebras because properties of twisted $C^*$-algebras frequently exhibit strong connections with the twisting cohomology data. Examples of twisted $C^*$-algebras include: twisted crossed products \cite{PackerRaeburn:MPCPS89}, twisted groupoid $C^*$-algebras obtained from a local homeomorphism in \cite{DeaconuKumjianEtAl:JOT01}, and twisted $k$-graph algebras \cite{MR2948223,twisted k graph algebra}. The survey paper \cite{RaeburnSimsEtAl:00} provides lots of interesting examples and gives a detailed motivation for studying twisted $C^*$-algebras.

In this paper we invoke sheaf cohomology theory from \cite{RaeburnWilliams:Moritaequivalenceand98} to generalize Katsura's graph correspondences in \cite{Katsura:TAMS04} to twisted ones, which will be done in Section~3. We mainly study the Cuntz-Pimsner algebra of a twisted graph correspondence, which we regard as the twisted topological graph algebra. In Section~4 we prove a stronger version of a result of Vasselli in \cite{MR1966825}, which provides a large class of twisted topological graph algebras that are not isomorphic to the ordinary topological graph algebras of the same graphs. In Section~5, we prove a series of technical results generalizing parts of Katsura's work in \cite{Katsura:TAMS04, Katsura:IJM06, Katsura:ETDS06}. These results allow us to prove versions of the fundamental structure theorems for twisted topological graph algebras. In Section~6, we prove a version of the Cuntz-Krieger uniqueness theorem for twisted topological graph algebras. In Section~7, we establish a complete characterization of the gauge-invariant closed two-sided ideals of a twisted topological graph algebra. In Section~8, we give some conditions which are equivalent to the simplicity of the twisted topological graph algebra. In particular, we prove that the twisted topological graph algebra of a topological graph is simple if and only if the ordinary topological graph algebra is simple.

\section{Preliminaries}

Throughout this paper, we adopt the following conventions. Let $T$ be a locally compact Hausdorff space, and let $U$ be an open subset of $T$. For $f \in C_0(U)$, and $g \in C_b(U)$, define a function $f \times g \in C_0(U)$ by $f \times g(t) := f(t)g(t)$ if $t \in U$. For a closed two-sided ideal $J$ of a $C^*$-algebra $A$, define a closed two-sided ideal of $A$ by $J^{\perp}:= \{a \in A:ab=0\text{ for all }b \in J\}$. For a right Hilbert $A$-module $X$, define a closed right $A$-submodule $X_J:=\overline{\lsp}\{x \cdot j:x \in X,j \in J\}$.

In this section, we recall some background about $C^*$-correspondences and topological graphs which will be used throughout this paper. 

First of all, let us recap the material about $C^*$-correspondences from \cite{FowlerRaeburn:IUMJ99, Katsura:JFA04, Pimsner:FIC97}.

Let $A$ be a $C^*$-algebra, let $X$ be a right Hilbert $A$-module, and let $\phi:A \to \mathcal{L}(X)$ be a homomorphism. Then the pair $(X,\phi)$ is called a \emph{$C^*$-correspondence} over $A$. A right Hilbert $A$-module $X$ is a $C^*$-correspondence if and only if there is a left action $\cdot: A \times X \to X$ such that $\langle a^* \cdot y,x \rangle_A=\langle y,a\cdot x\rangle_A$. In the rest of this paper we will refer to as $X$ is a $C^*$-correspondence over $A$.

Fix a $C^*$-correspondence $X$ over a $C^*$-algebra $A$. A pair $(\psi,\pi)$ consisting of a linear map $\psi:X \to B$ and a homomorphism $\pi:A \to B$ is called a \emph{Toeplitz
representation} of $X$ in a $C^*$-algebra $B$ if $\psi(a \cdot x)=\pi(a)\psi(x)$ and $\psi(x)^*\psi(y)=\pi(\langle x,y \rangle_A)$. Define $C^*(\psi,\pi)$ to be the $C^*$-subalgebra of $B$ generated by the images of $\psi$ and $\pi$. Proposition~1.3 of \cite{FowlerRaeburn:IUMJ99} shows that there is a $C^*$-algebra $\mathcal{T}_X$ generated by an injective universal Toeplitz representation $(i_X,i_A)$ of $X$. We call $\mathcal{T}_X$ the \emph{Toeplitz algebra} of $X$. There is a homomorphism $\psi^{(1)} : \mathcal{K}(X) \to B$ such that $\psi^{(1)}(\Theta_{x,y}) =\psi(x)\psi(y)^*$. Define a closed two-sided ideal of $A$ by $J_X:=\phi^{-1}(\mathcal{K}(X)) \cap (\ker\phi)^{\perp}$. The pair $(\psi,\pi)$ is \emph{covariant} if $\psi^{(1)}(\phi(a))=\pi(a)$ for all $a \in J_X$. \cite[Proposition~4.11]{Katsura:JFA04} shows that there is a $C^*$-algebra $\mathcal{O}_X$ generated by an injective universal covariant Toeplitz representation $(j_X,j_A)$ of $X$. We call $\mathcal{O}_X$ the \emph{Cuntz-Pimsner algebra} of $X$. Define $X^{\otimes 0}:=A$, define $X^{\otimes 1}:=X$, and inductively define $X^{\otimes n}:=X \otimes_A X^{\otimes n-1}$ for $n \geq 2$. The \emph{Fock space} $\mathcal{F}(X)$ of $X$ is the direct sum $\bigoplus_{n=0}^{\infty}X^{\otimes n}$. Define $\psi^{\otimes 0}:=\pi$ and $\psi^{\otimes 1}:=\psi$. For each $n \geq 2$, there is a linear map $\psi^{\otimes n}:X^{\otimes n} \to B$ such that $\psi^{\otimes n}(x \otimes \xi)=\psi(x)\psi^{\otimes n-1}(\xi)$ for all $x \in X$ and $\xi \in X^{\otimes n-1}$. Define $\psi^{(0)}:=\pi$. For each $n \geq 1$, since the pair $(\psi^{\otimes n},\pi)$ is a Toeplitz representation of $X^{\otimes n}$, there is a homomorphism $\psi^{(n)}:\mathcal{K}(X^{\otimes n}) \to B$ such that $\psi^{(n)}(\Theta_{\xi,\eta})=\psi^{\otimes n}(\xi)\psi^{\otimes n}(\eta)^*$. Finally, \cite[Proposition~2.7]{Katsura:JFA04} says that 
\begin{equation}\label{simplification of C^*(psi,pi)}
C^*(\psi,\pi)=\overline{\lsp}\{\psi^{\otimes n}(\xi)\psi^{\otimes m}(\eta)^*:\xi \in X^{\otimes n}, \eta \in X^{\otimes m} \}.
\end{equation}

In the rest of the section we recall the notion of a topological graph as studied by Katsura in \cite{Katsura:TAMS04}. A \emph{topological graph} is a quadruple $E=(E^0,E^1,r,s)$ such that $E^0, E^1$ are locally compact Hausdorff spaces, $r:E^1 \to E^0$ is a continuous map, and $s:E^1\to E^0$ is a local
homeomorphism. An \emph{$s$-section} is a subset $U \subset E^1$ such that $s \vert_U:U \to s(U)$ is a homeomorphism.

Given a topological graph $E$, Katsura in \cite{Katsura:TAMS04} defined a $C^*$-correspondence $X(E)$ over $C_0(E^0)$ called the \emph{graph correspondence associated to $E$}. For $x, y \in C_c(E^1),f \in C_0(E^0)$, and for $v \in E^0$, define $x\cdot f:=x (f \circ s), f \cdot x:=(f \circ r) x$, and $\langle x,y \rangle_{C_0(E^0)}(v):=\sum_{s(e)=v}\overline{x(e)}y(e)$. Then $C_c(E^1)$ is a right inner product $C_0(E^0)$-module, and the completion of $C_c(E^1)$ under the $\Vert\cdot\Vert_{C_0(E^0)}$-norm is the graph correspondence $X(E)$. The Toeplitz algebra of the graph correspondence $X(E)$ is denoted by $\mathcal{T}(E)$, and the Cuntz-Pimsner algebra of the graph correspondence $X(E)$ is denoted by $\mathcal{O}(E)$.

For a topological graph $E$, Katsura in \cite{Katsura:TAMS04} defined the following subsets of the vertex set $E^0$: the set $E_{\mathrm{sce}}^0:=E^0 \setminus \overline{r(E^1)}$ of sources; the set of finite receivers $E_{\mathrm{fin}}^0$ consisting of vertices $v$ with an open neighborhood $N$ of $v$ such that $r^{-1}(\overline{N})$ is compact; the set $E_{\mathrm{rg}}^0:=E_{\mathrm{fin}}^0 \setminus \overline{E_{\mathrm{sce}}^0}$ of regular vertices; and the set $E_{\mathrm{sg}}^0:=E^0 \setminus E_{\mathrm{rg}}^0$ of singular vertices.

\section{Twisted Topological Graph Algebras}

In this section, we firstly define the $1$-cocycles on a locally compact Hausdorff space from \cite{RaeburnWilliams:Moritaequivalenceand98}, and then in Theorem~\ref{C_0(E^0)-valued inner product on C_c(E,N,S)} we incorporate a $1$-cocycle into Katsura's construction of the graph correspondence associated to a topological graph to define the twisted graph correspondence.

Let $T$ be a locally compact Hausdorff space, let $\mathbf{N}=\{N_\alpha\}_{\alpha \in \Lambda}$ be an open cover of $T$. For $\alpha_1, \dots, \alpha_n \in \Lambda$, define
\[
N_{\alpha_1\dots\alpha_n} := \bigcap^n_{i=1} N_{\alpha_i}.
\]

\begin{defn}[{\cite[Definition~4.22]{RaeburnWilliams:Moritaequivalenceand98}}]\label{define a 1-cocycle relative to an open cover}
Let $T$ be a locally compact Hausdorff space, and let $\mathbf{N}=\{N_\alpha\}_{\alpha \in \Lambda}$ be an open cover of $T$. A collection of circle-valued continuous functions $\mathbf{S}=\{s_{\alpha\beta} \in C(\overline{N_{\alpha\beta}},\mathbb{T})\}_{\alpha, \beta \in \Lambda}$ is called a \emph{$1$-cocycle} relative to $\mathbf{N}$ if for $\alpha, \beta, \gamma \in \Lambda, s_{\alpha\beta}s_{\beta\gamma}=s_{\alpha\gamma}$ on $\overline{N_{\alpha\beta\gamma}}$. Suppose that $x, y \in \prod_{\alpha \in \Lambda} C(\overline{N_\alpha})$ satisfy $x_\alpha=s_{\alpha\beta}x_\beta$ and $y_\alpha=s_{\alpha\beta}y_\beta \ \mathrm{on}\ \overline{N_{\alpha\beta}}$ for all $\alpha, \beta \in \Lambda$. Define $[x \vert y] \in C(T)$ by
\begin{align*}
[x \vert y](t)=\overline{x_\alpha(t)}y_\alpha(t),\ \mathrm{if} \ t \in N_\alpha.
\end{align*} 
\end{defn}

\begin{defn}\label{define C_c(E,N,S)}
Let $E$ be a topological graph, let $\mathbf{N}=\{N_\alpha\}_{\alpha \in \Lambda}$ be an open cover of $E^1$, and let $\mathbf{S}=\{s_{\alpha\beta}\}_{\alpha,\beta \in \Lambda}$ be a $1$-cocycle relative to $\mathbf{N}$. Define
\begin{align*}
C_c({E,\mathbf{N},\mathbf{S}}):=\Big\{x \in \prod_{\alpha \in \Lambda} C(\overline{N_\alpha}): x_\alpha=s_{\alpha\beta}x_\beta \ \mathrm{on}\ \overline{N_{\alpha\beta}}, [x \vert x] \in C_c(E^1) \Big\}.
\end{align*}
For $x, y \in C_c({E,\mathbf{N},\mathbf{S}}), \alpha \in \Lambda, f \in C_0(E^0)$, and $v \in E^0$, define
\begin{enumerate}
\item\label{define x cdot f} $(x\cdot f)_\alpha:=x_\alpha (f \circ s \vert_{\overline{N_\alpha}})$;
\item\label{define langle x,y rangle_C_0(E^0)}$\langle x,y \rangle_{C_0(E^0)}(v):=\sum_{s(e)=v}[x\vert y](e)$; and
\item\label{define f cdot x}$(f \cdot x)_\alpha:=(f \circ r \vert_{\overline{N_\alpha}})x_\alpha$. 
\end{enumerate}
\end{defn}

We see that $C_c({E,\mathbf{N},\mathbf{S}})$ is a vector space under pointwise operations, and Properties~(\ref{define x cdot f}) and (\ref{define f cdot x}) of Definition~\ref{define C_c(E,N,S)} give right and left $C_0(E^0)$-actions on $C_c({E,\mathbf{N},\mathbf{S}})$, respectively. 

For $x, y \in C_c({E,\mathbf{N},\mathbf{S}})$, the polarization identity and \cite[Lemma~1.5]{Katsura:TAMS04} imply that
\begin{align*}
\langle x,y \rangle_{C_0(E^0)}&=\frac{1}{4}\sum_{n=0}^{3}(-i)^n\langle x+i^n y ,x+i^n y\rangle_{C_0(E^0)} 
\\&=\frac{1}{4}\sum_{n=0}^{3}(-i)^n \Big\langle \sqrt{[x+i^n y \vert x+i^n y]}, \sqrt{[x+i^n y \vert x+i^n y]}\Big\rangle_{C_0(E^0)}  \in C_c(E^0).
\end{align*}

It is easy to verify that $\langle\cdot,\cdot\rangle_{C_0(E^0)}$ in Definition~\ref{define C_c(E,N,S)} is a right $C_0(E^0)$-valued inner product on $C_c(E,\mathbf{N},\mathbf{S})$. We denote the completion of $C_c(E,\mathbf{N},\mathbf{S})$ under the $\Vert\cdot\Vert_{C_0(E^0)}$-norm by $X(E,\mathbf{N},\mathbf{S})$.

\begin{thm}\label{C_0(E^0)-valued inner product on C_c(E,N,S)}
Let $E$ be a topological graph, let $\mathbf{N}=\{N_\alpha\}_{\alpha \in \Lambda}$ be an open cover of $E^1$, and let $\mathbf{S}=\{s_{\alpha\beta}\}_{\alpha,\beta \in \Lambda}$ be a $1$-cocycle relative to $\mathbf{N}$. Then $X(E,\mathbf{N},\mathbf{S})$ is a $C^*$-correspondence over $C_0(E^0)$.
\end{thm}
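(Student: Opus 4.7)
The plan is to verify the three ingredients required for $X(E,\mathbf{N},\mathbf{S})$ to be a $C^*$-correspondence over $C_0(E^0)$: it is a right Hilbert $C_0(E^0)$-module, left multiplication by $f \in C_0(E^0)$ extends to a bounded operator, and these operators are adjointable in the sense compatible with the inner product. Much of the setup is already in the text, so the work is essentially book-keeping; the key observation throughout is that the twist $s_{\alpha\beta}$ cancels in any expression of the form $\overline{x_\alpha}y_\alpha$, so pointwise formulas that factor through such products depend only on genuinely continuous functions on $E^1$.

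First I would check that $C_c(E,\mathbf{N},\mathbf{S})$ is closed under the left and right actions. For $x \in C_c(E,\mathbf{N},\mathbf{S})$ and $f \in C_0(E^0)$, the cocycle condition for $x \cdot f$ and $f \cdot x$ is immediate from $s_{\alpha\beta}x_\beta = x_\alpha$ and the fact that $(f \circ s)$ and $(f \circ r)$ are scalar-valued. A direct pointwise computation gives $[x \cdot f \vert x \cdot f](e) = \vert f(s(e))\vert^2 [x \vert x](e)$ and $[f \cdot x \vert f \cdot x](e) = \vert f(r(e))\vert^2 [x \vert x](e)$, both of which lie in $C_c(E^1)$ since $[x \vert x] \in C_c(E^1)$ and $f \circ s, f \circ r \in C_b(E^1)$ by continuity of $s$ and $r$.

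Next I would verify the compatibility of the right action with the inner product. A pointwise calculation gives
\[
\langle x, y \cdot f \rangle_{C_0(E^0)}(v) = \sum_{s(e)=v} \overline{x_\alpha(e)} y_\alpha(e) f(s(e)) = \langle x,y\rangle_{C_0(E^0)}(v)\,f(v),
\]
because $s(e)=v$ on every summand. Combined with the already-noted fact that $\langle\cdot,\cdot\rangle_{C_0(E^0)}$ is a $C_0(E^0)$-valued inner product, this makes $C_c(E,\mathbf{N},\mathbf{S})$ a right pre-Hilbert $C_0(E^0)$-module, whose completion $X(E,\mathbf{N},\mathbf{S})$ is by definition a right Hilbert $C_0(E^0)$-module.

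For the left action, set $\phi(f)x := f \cdot x$. The pointwise identity above gives $\langle f \cdot x, f \cdot x\rangle_{C_0(E^0)} \leq \Vert f\Vert_\infty^2 \langle x,x\rangle_{C_0(E^0)}$ as positive elements of $C_0(E^0)$, so $\Vert \phi(f)x\Vert \leq \Vert f\Vert_\infty \Vert x\Vert$ and $\phi(f)$ extends to a bounded operator on $X(E,\mathbf{N},\mathbf{S})$. Adjointability with adjoint $\phi(\bar f)$ follows from
\[
\langle f \cdot x, y\rangle_{C_0(E^0)}(v) = \sum_{s(e)=v}\overline{f(r(e))}\,\overline{x_\alpha(e)}y_\alpha(e) = \langle x, \bar f \cdot y\rangle_{C_0(E^0)}(v).
\]
Linearity, multiplicativity, and $\ast$-preservation of $\phi\colon C_0(E^0) \to \Ll(X(E,\mathbf{N},\mathbf{S}))$ are clear from the definition $(f \cdot x)_\alpha = (f \circ r\vert_{\overline{N_\alpha}})x_\alpha$.

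There is no real obstacle beyond the routine verifications above; the only point that requires a moment's care is ensuring $x \cdot f$ and $f \cdot x$ actually belong to $C_c(E,\mathbf{N},\mathbf{S})$, that is, that they satisfy the cocycle condition and have compactly supported $[\cdot\vert\cdot]$. Once those two observations are in place, every other step reduces to the corresponding Katsura argument applied to the $s_{\alpha\beta}$-invariant quantity $[\cdot\vert\cdot]$.
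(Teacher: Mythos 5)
Your proposal is correct and follows essentially the same route as the paper: the two substantive steps are the pointwise estimate $\langle f\cdot x, f\cdot x\rangle_{C_0(E^0)} \leq \Vert f\Vert^2\langle x,x\rangle_{C_0(E^0)}$ giving boundedness of the left action, and the pointwise adjoint identity $\langle f\cdot x,y\rangle_{C_0(E^0)}=\langle x,\bar f\cdot y\rangle_{C_0(E^0)}$, both of which appear verbatim (up to notation) in the paper's proof. The additional bookkeeping you carry out (closure of $C_c(E,\mathbf{N},\mathbf{S})$ under the actions and compatibility of the right action with the inner product) is exactly the content the paper disposes of as ``easy to verify'' in the paragraphs preceding the theorem, so nothing is missing.
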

\begin{proof}
For $x \in C_c(E,\mathbf{N},\mathbf{S})$, and $f \in C_0(E^0)$, we have
\begin{align*}
\Vert f \cdot x \Vert_{C_0(E^0)}^2&=\sup_{v \in E^0}\Big(\sum_{s(e)=v}\vert f(r(e)) \vert^2 [x \vert x](e)\Big)\leq\Vert f \Vert^2\sup_{v \in E^0}\Big(\sum_{s(e)=v} [x \vert x](e)\Big)
\\&= \Vert f \Vert^2 \Vert x \Vert_{C_0(E^0)}^2.
\end{align*}
So the left $C_0(E^0)$-action on $C_c(E,\mathbf{N},\mathbf{S})$ can be extended to $X(E,\mathbf{N},\mathbf{S})$. For $x,y \in C_c(E,\mathbf{N},\mathbf{S}), f \in C_0(E^0)$, and $v \in E^0$, we have
\begin{align*}
\langle f^* \cdot y,x \rangle_{C_0(E^0)}(v)&=\sum_{s(e)=v}[((f^* \circ r) \vert_{\overline{N_\alpha}}y_\alpha) \vert x](e)
=\sum_{s(e)=v}[y \vert ((f \circ r) \vert_{\overline{N_\alpha}} x_\alpha)](e)
\\&=\langle y,f \cdot x \rangle_{C_0(E^0)}(v).
\end{align*}
By continuity of the left action, $X(E,\mathbf{N},\mathbf{S})$ is a $C^*$-correspondence over $C_0(E^0)$. 
\end{proof}

\begin{defn}\label{def of T(E,N,S) and O(E,N,S)}
We call $X(E,\mathbf{N},\mathbf{S})$ the \emph{twisted graph correspondence} associated to the graph $E$ and the $1$-cocycle $\mathbf{S}$. We denote by $\mathcal{T}(E,\mathbf{N},\mathbf{S})$ the Toeplitz algebra of the twisted graph correspondence $X(E,\mathbf{N},\mathbf{S})$, and denote by $\mathcal{O}(E,\mathbf{N},\mathbf{S})$ the Cuntz-Pimsner algebra of $X(E,\mathbf{N},\mathbf{S})$.
\end{defn}

\begin{rmk}
Let $E$ be a topological graph, let $\mathbf{N}=\{N_\alpha\}_{\alpha \in \Lambda}$ be an open cover of $E^1$, and let $\mathbf{S}=\{s_{\alpha\beta}\}_{\alpha,\beta \in \Lambda}$ be a $1$-cocycle relative to $\mathbf{N}$. When $\mathbf{N}=\{E^1\}$ and $\mathbf{S}=\{1\}$, then $X(E,\mathbf{N},\mathbf{S})$ coincides with the ordinary graph correspondence $X(E)$. When $E^0=E^1$ and $r=s=\id$, Raeburn in \cite[Proposition~A3]{Raeburn:TAMS81} showed that $X(E,\mathbf{N},\mathbf{S})$ characterizes the $C_0(E^0)$--$C_0(E^0)$ imprimitivity bimodules with trivial Rieffel homeomorphism. When $E^0=E^1, r=\id$, and $s$ is a homeomorphism, in \cite[Theorem~3.1.16]{HLiPhDThesis} we proved a stronger result that $X(E,\mathbf{N},\mathbf{S})$ characterizes any $C_0(E^0)$--$C_0(E^0)$ imprimitivity bimodule.
\end{rmk}

\begin{eg}\label{Vasselli's eg}
Let $T$ be a compact Hausdorff space, let $\mathbf{N}=\{N_\alpha\}_{\alpha \in \Lambda}$ be an open cover of $T$, and let $\mathbf{S}=\{s_{\alpha\beta}\}_{\alpha,\beta \in \Lambda}$ be a $1$-cocycle relative to $\mathbf{N}$. Define a topological graph $E:=(T,T,\id,\id)$. The principal circle bundle $\mathbf{B}$ induced from the $1$-cocycle $\mathbf{S}$ is the quotient of $\amalg_{\alpha \in \Lambda}(N_\alpha \times \mathbb{T})$ by the equivalence relation $((t,z),\alpha) \sim ((t,z s_{\alpha\beta}(t)),\beta)$, which is a compact Hausdorff space (see \cite[Proposition~4.53, Example~4.58]{RaeburnWilliams:Moritaequivalenceand98}). Vasselli showed in \cite[Proposition~4.3]{MR1966825} that $\mathcal{O}(E,\mathbf{N},\mathbf{S}) \cong C(\mathbf{B})$.
\end{eg}

\begin{eg}
Example~\ref{Vasselli's eg} actually provides a class of examples that the twisted topological graph algebra of a topological graph may not be isomorphic to the untwisted topological graph algebra. For example, define a topological graph $E:=(S^2,S^2,\id,\id)$, the topological graph algebra $\mathcal{O}(E)$ is isomorphic to $C(S^2 \times \mathbb{T})$. Since $S^3$ is the Hopf circle bundle over $S^2$ with the projection $p:S^3 \to S^2$ by $p(z_1,z_2):=(2z_1z_2^*,\vert z_1 \vert^2-\vert z_2 \vert^2)$, there is a twisted topological graph algebra of $E$ which is isomorphic to $C(S^3)$. We can see that $S^3$ is not homeomorphic to $S^2 \times \mathbb{T}$ by calculating their fundamental groups. We have $\pi_1(S^3)=0$, and $\pi_1(S^2 \times \mathbb{T}) \cong \mathbb{Z}$.
\end{eg}

\begin{rmk}
The notion of $1$-cocycles in Definition~\ref{define a 1-cocycle relative to an open cover} comes from sheaf cohomology theory. Let $E$ be a topological graph. Each $1$-cocycle on $E^1$ canonically represents an element of the first cohomology group $H^1(E^1,\mathcal{S})$ (see \cite[Definition~4.22]{RaeburnWilliams:Moritaequivalenceand98}) and all $1$-cocycles on $E^1$ determine the group $H^1(E^1,\mathcal{S})$. In \cite[Theorem~3.3.3]{HLiPhDThesis} it was shown that two $1$-cocycles on $E^1$ representing the same element $H^1(E^1,\mathcal{S})$ give rise to isomorphic twisted graph correspondences. Therefore, from now on, we restrict our attention to covers of $E^1$ by precompact open $s$-sections.
\end{rmk}

Let $E$ be a topological graph, let $\mathbf{N}=\{N_\alpha\}_{\alpha \in \Lambda}$ be a cover of $E^1$ by precompact open $s$-sections, and let $\mathbf{S}=\{s_{\alpha\beta}\}_{\alpha,\beta \in \Lambda}$ be a $1$-cocycle relative to $\mathbf{N}$. The first look at the twisted graph correspondence $X(E,\mathbf{N},\mathbf{S})$ gives us the impression that its structure is maybe very complicated. As a matter of fact, every element in $C_c(E,\mathbf{N},\mathbf{S})$ is spanned by very simple elements. To establish this feature, we need to set up some notation. For $\alpha_0, \alpha \in \Lambda$, and for $f \in C_0(N_{\alpha_0})$, define $f^{\Ind_{\alpha_0}^{\alpha}} \in C(\overline{N_\alpha})$ by 
\begin{align*}
f^{\Ind_{\alpha_0}^{\alpha}}(t) := \begin{cases}
    s_{\alpha\alpha_0}(t)f(t) &\text{ if $t \in \overline{N_{\alpha\alpha_0}}$} \\
    0 &\text{ if $t \in \overline{N_\alpha} \setminus \overline{N_{\alpha\alpha_0}}$ }.
\end{cases}
\end{align*}
Then $(f^{\Ind_{\alpha_0}^{\alpha}})_{\alpha \in \Lambda} \in C_c(E,\mathbf{N},\mathbf{S})$. For $\alpha_1 \in \Lambda, g \in C_0(N_{\alpha_1})$, we have $[(f^{\Ind_{\alpha_0}^{\alpha}})\vert(g^{\Ind_{\alpha_1}^{\alpha}})]=(f^*g) \times s_{\alpha_0\alpha_1}\vert_{N_{\alpha_0\alpha_1}}$. Hence 
\[
\langle (f^{\Ind_{\alpha_0}^{\alpha}})_{\alpha \in \Lambda},(g^{\Ind_{\alpha_1}^{\alpha}})_{\alpha \in \Lambda}  \rangle_{C_0(E^0)}=\langle f,g \rangle_{C_0(E^0)} \times (s_{\alpha_0 \alpha_1}\vert_{N_{\alpha_0 \alpha_1}} \circ s\vert_{N_{\alpha_0 \alpha_1}}^{-1}).
\]

\begin{prop}\label{induced tuple generates C_c tuple}
Let $E$ be a topological graph, let $\mathbf{N}=\{N_\alpha\}_{\alpha \in \Lambda}$ be a cover of $E^1$ by precompact open $s$-sections, and let $\mathbf{S}=\{s_{\alpha\beta}\}_{\alpha,\beta \in \Lambda}$ be a $1$-cocycle relative to $\mathbf{N}$.  Then
\[
C_c(E,\mathbf{N},\mathbf{S})=\lsp\{(f^{\Ind_{\alpha_0}^{\alpha}})_{\alpha \in \Lambda}: \alpha_0 \in \Lambda, f \in C_c(N_{\alpha_0})\}.
\] 
\end{prop}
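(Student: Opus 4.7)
The plan is to exploit the compactness of $\supp[x\vert x]$ to reduce any $x \in C_c(E,\mathbf{N},\mathbf{S})$ to a finite sum via a partition of unity. Given $x$, set $K := \supp[x\vert x] \subset E^1$, which is compact because $[x\vert x] \in C_c(E^1)$. Choose $\alpha_1,\dots,\alpha_n \in \Lambda$ with $K \subset \bigcup_{i=1}^n N_{\alpha_i}$, and pick a partition of unity $\{\rho_i\}_{i=1}^n$ subordinate to this finite subcover, with $\rho_i \in C_c(N_{\alpha_i})$, $\rho_i \geq 0$, and $\sum_{i=1}^n \rho_i \equiv 1$ on $K$. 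For each $i$, set $f_i := \rho_i \cdot (x_{\alpha_i}\vert_{N_{\alpha_i}}) \in C_c(N_{\alpha_i})$. I would then show that
\[
x = \sum_{i=1}^n \bigl(f_i^{\Ind_{\alpha_i}^{\alpha}}\bigr)_{\alpha \in \Lambda},
\]
which gives the nontrivial inclusion; the reverse inclusion holds by the observation, made immediately before the proposition, that each $(f^{\Ind_{\alpha_0}^{\alpha}})_{\alpha\in\Lambda}$ lies in $C_c(E,\mathbf{N},\mathbf{S})$.

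To verify the identity, I would check coordinatewise that $x_\alpha = \sum_{i=1}^n f_i^{\Ind_{\alpha_i}^{\alpha}}$ on $\overline{N_\alpha}$ for every $\alpha \in \Lambda$. Fix $t \in \overline{N_\alpha}$. For each index $i$ with $t \in \overline{N_{\alpha\alpha_i}}$, the cocycle relation $x_\alpha = s_{\alpha\alpha_i} x_{\alpha_i}$ on $\overline{N_{\alpha\alpha_i}}$ yields
\[
f_i^{\Ind_{\alpha_i}^{\alpha}}(t) = s_{\alpha\alpha_i}(t)\rho_i(t)x_{\alpha_i}(t) = \rho_i(t) x_\alpha(t),
\]
while for $i$ with $t \notin \overline{N_{\alpha\alpha_i}}$ the summand is zero by definition. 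If $t \in K$, the sum collapses to $\bigl(\sum_i \rho_i(t)\bigr) x_\alpha(t) = x_\alpha(t)$, provided every $i$ with $\rho_i(t) \neq 0$ automatically satisfies $t \in \overline{N_{\alpha\alpha_i}}$. If $t \notin K$, then $[x\vert x](t) = 0$; since $[x\vert x]$ and $\vert x_\alpha\vert^2$ are continuous and agree on the dense subset $N_\alpha$ of $\overline{N_\alpha}$, also $x_\alpha(t) = 0$, and every summand vanishes.

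The only subtle point — and the step I expect to be the main obstacle — is the topological fact that $\rho_i(t) \neq 0$ together with $t \in \overline{N_\alpha}$ forces $t \in \overline{N_{\alpha\alpha_i}}$. To see this, note $\rho_i(t) \neq 0$ implies $t \in N_{\alpha_i}$. For any open neighborhood $U$ of $t$, the set $U \cap N_{\alpha_i}$ is again an open neighborhood of $t$ and must meet $N_\alpha$ (since $t \in \overline{N_\alpha}$); thus $U$ meets $N_\alpha \cap N_{\alpha_i} = N_{\alpha\alpha_i}$, giving $t \in \overline{N_{\alpha\alpha_i}}$. With this observation in hand, the pointwise identity closes and the proposition follows.
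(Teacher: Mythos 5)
Your proposal is correct and follows essentially the same route as the paper's proof: cover the compact set $\supp[x\vert x]$ by finitely many $N_{\alpha_i}$, take a subordinate partition of unity, and write $x$ as the sum of the tuples induced from $\rho_i\times(x_{\alpha_i}\vert_{N_{\alpha_i}})\in C_c(N_{\alpha_i})$. The only difference is that you spell out the coordinatewise verification (including the point that $t\in\overline{N_\alpha}\cap N_{\alpha_i}$ forces $t\in\overline{N_{\alpha\alpha_i}}$), which the paper leaves implicit; the argument is sound.
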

\begin{proof}
The inclusion $\supset$ is obvious since $\mathbf{N}$ consists of precompact open sets. For the reverse inclusion, fix $x \in C_c(E,\mathbf{N},\mathbf{S})$. For any $e \in \supp([x \vert x])$, there exists $\alpha_e \in \Lambda$, such that $e \in N_{\alpha_e}$. Since $\supp([x \vert x])$ is compact, there exists a finite subset $F \subset \supp([x \vert x])$ such that $\{N_{\alpha_e}\}_{e \in F}$ covers $\supp([x \vert x])$. We use a partition of unity (see \cite[Lemma~1.43]{Williams:Crossedproductsof07}) to get a finite collection of functions $\{h_e:e \in F \} \subset C_0(E^1)$ such that $\supp(h_e) \subset N_{\alpha_e}$ for all $e \in F$, and $\sum_{e \in F}h_e=1$ on $\supp([x \vert x])$. Then $x=\sum_{e \in F}(h_e \vert_{\overline{N_\alpha}} x_\alpha)_{\alpha \in \Lambda}$. Fix $e \in F$. Since $x_{\alpha_e} \in C(\overline{N_{\alpha_e}})$ and $\overline{N_{\alpha_e}}$ is compact, $x_{\alpha_e} \vert_{N_{\alpha_e}} \in C_b(N_{\alpha_e})$. So $h_e \times (x_{\alpha_e} \vert_{N_{\alpha_e}}) \in C_c(N_{\alpha_e})$, and $(h_e \vert_{\overline{N_\alpha}} x_\alpha)_{\alpha \in \Lambda}=((h_e \times (x_{\alpha_e} \vert_{N_{\alpha_e}}))^{\Ind_{\alpha_e}^{\alpha}})_{\alpha \in \Lambda}$.
\end{proof}

The following proposition generalizes \cite[Proposition~1.24]{Katsura:TAMS04}.

\begin{prop}\label{phi^{-1}(K(X)) intersects ker(phi^{perp}) of X_{E,N,S}}
Let $E$ be a topological graph, let $\mathbf{N}=\{N_\alpha\}_{\alpha \in \Lambda}$ be a cover of $E^1$ by precompact open $s$-sections, and let $\mathbf{S}=\{s_{\alpha}\}_{\alpha,\beta \in \Lambda}$ be a $1$-cocycle relative to $\mathbf{N}$. Then 
\[
J_{X(E,\mathbf{N},\mathbf{S})}=\phi^{-1}(\mathcal{K}(X(E,\mathbf{N},\mathbf{S}))) \cap (\ker\phi)^{\perp}=C_0(E_{\mathrm{rg}}^0).
\]
\end{prop}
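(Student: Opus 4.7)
The overall strategy is to adapt Katsura's proof of \cite[Proposition~1.24]{Katsura:TAMS04} to the twisted setting by working throughout with the induced elements $(f^{\Ind_{\alpha_0}^{\alpha}})_{\alpha}$ supplied by Proposition~\ref{induced tuple generates C_c tuple}. Since the first equality in the displayed formula is the definition of $J_X$, only the second equality needs proof. I begin by identifying $\ker\phi$: for $\xi = (g^{\Ind_{\alpha_0}^\alpha})_\alpha$ with $g \in C_c(N_{\alpha_0})$, the $\alpha_0$-component of $f \cdot \xi$ equals $(f \circ r) g$ on $N_{\alpha_0}$, and combined with Proposition~\ref{induced tuple generates C_c tuple} this shows $\phi(f)=0$ iff $f$ vanishes on $\overline{r(E^1)}$. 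Hence $\ker\phi = C_0(E^0_{\mathrm{sce}})$, and by standard commutative $C^*$-algebra theory $(\ker\phi)^{\perp} = C_0(E^0 \setminus \overline{E^0_{\mathrm{sce}}})$.

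To establish $C_0(E^0_{\mathrm{rg}}) \subseteq \phi^{-1}(\mathcal{K}(X(E,\mathbf{N},\mathbf{S})))$, by norm density it suffices to take $f \in C_c(E^0_{\mathrm{rg}})$. The finite-receiver hypothesis provides a precompact open $V \supset \supp f$ with $r^{-1}(\overline{V})$ compact; I cover $r^{-1}(\overline{V})$ by finitely many $N_{\alpha_1}, \dots, N_{\alpha_n} \in \mathbf{N}$ and choose a subordinate partition of unity $\{h_i\}$ with $\sum_i h_i \equiv 1$ on $r^{-1}(\overline{V})$. Setting $\xi_i := ((\sqrt{h_i})^{\Ind_{\alpha_i}^{\alpha}})_\alpha$ and $\eta_i := (((f \circ r)\sqrt{h_i})^{\Ind_{\alpha_i}^{\alpha}})_\alpha$, I claim $\phi(f) = \sum_{i=1}^n \Theta_{\eta_i,\xi_i} \in \mathcal{K}(X(E,\mathbf{N},\mathbf{S}))$. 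The verification, for each edge $e \in \overline{N_\alpha}$, reduces to the cocycle identity $s_{\alpha\alpha_i}(e)\, s_{\alpha_i\alpha}(e) = s_{\alpha\alpha}(e) = 1$ together with the $s$-section property (so that the sum $\sum_{s(e')=s(e),\, e'\in N_{\alpha_i}}$ appearing in $\langle\xi_i,\cdot\rangle$ collapses to a single term).

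For the reverse inclusion, assume $\phi(f) \in \mathcal{K}(X(E,\mathbf{N},\mathbf{S})) \cap (\ker\phi)^{\perp}$; since $E^0_{\mathrm{rg}} = E^0_{\mathrm{fin}} \setminus \overline{E^0_{\mathrm{sce}}}$, the remaining task is to show $\supp f \subseteq E^0_{\mathrm{fin}}$. Arguing by contradiction, pick $v_0 \in \supp f \setminus E^0_{\mathrm{fin}}$. A standard local-finiteness argument (using that $s$ is a local homeomorphism and that every neighborhood of $v_0$ has non-precompact $r$-preimage) yields distinct edges $e_n$ with $r(e_n) \to v_0$ and no convergent subsequence, sitting in pairwise disjoint open $s$-sections $U_n \subseteq N_{\alpha(n)}$. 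Choosing bumps $h_n \in C_c(U_n)$ with $h_n(e_n)=1$ and $0 \leq h_n \leq 1$, and setting $\xi_n := (h_n^{\Ind_{\alpha(n)}^\alpha})_\alpha$, disjointness of the $U_n$ makes $\{\xi_n\}$ an orthogonal family with $\|\xi_n\| \leq 1$, while continuity gives $\|\phi(f)\xi_n\| \to |f(v_0)| > 0$; orthogonality of the images then forces $\|\phi(f)\xi_n - \phi(f)\xi_m\|^2 \to 2|f(v_0)|^2$, contradicting compactness of $\phi(f)$. The main technical obstacle is the first inclusion: one must keep track of how the cocycle factors $s_{\alpha\alpha_i}$ transform across different charts when evaluating $\sum_i \Theta_{\eta_i,\xi_i}$ on an arbitrary element of $X(E,\mathbf{N},\mathbf{S})$, and the diagonal condition $s_{\alpha\alpha}\equiv 1$ together with the cocycle relation is precisely what causes the twist factors to cancel.
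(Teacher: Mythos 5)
Your identification of $\ker\phi$ and your proof of the inclusion $C_0(E^0_{\mathrm{fin}})\subseteq\phi^{-1}(\mathcal{K}(X(E,\mathbf{N},\mathbf{S})))$ via a partition of unity subordinate to finitely many charts covering $r^{-1}(\supp f)$ are essentially the paper's argument; the paper uses the symmetric operators $\Theta_{\xi_e,\xi_e}$ with $\xi_e=\big(\sqrt{h_e f\circ r}^{\Ind_{\alpha_e}^{\alpha}}\big)$ for nonnegative $f$, and your asymmetric variant $\Theta_{\eta_i,\xi_i}$ is an equivalent bookkeeping of the same cancellation of cocycle factors. The genuine problem is the last step of the reverse inclusion. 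Your contradiction rests on the principle that a compact operator cannot send an orthogonal family of norm-at-most-one vectors to a family whose mutual distances stay bounded below. That is a Hilbert-space fact and it is \emph{false} for Hilbert $C^*$-modules, where ``compact'' means norm-limit of finite-rank operators and does not imply that bounded sets have precompact images. For instance, with $A=C([0,1])$ and $X=A$, the identity operator equals $\Theta_{1,1}$ and is therefore compact, yet it fixes any sequence of disjointly supported norm-one bump functions --- an orthogonal family whose images remain at mutual distance $1$. So ``orthogonality of the images \ldots contradicting compactness of $\phi(f)$'' does not follow, and this is precisely the step carrying the weight of the whole direction.

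The step can be repaired, and the repair is what the paper does: argue against each finite-rank approximant $\sum_{i=1}^n\Theta_{x_i,y_i}$ with $x_i,y_i\in C_c(E,\mathbf{N},\mathbf{S})$ directly. Let $K$ be the compact union of the supports of the $[x_i\vert x_i]$ and $[y_i\vert y_i]$; since $v\notin E^0_{\mathrm{fin}}$ forces $r^{-1}(U)\not\subseteq K$ for a neighborhood $U$ on which $\vert f\vert\geq\epsilon$, a single edge $e\in N_{\alpha_0}\cap r^{-1}(U)\setminus K$ and a single Urysohn element $z$ with $z_{\alpha_0}(e)=1$ and $\Vert z\Vert=1$ give $\Vert\phi(f)-\sum_i\Theta_{x_i,y_i}\Vert\geq\vert f(r(e))\vert\geq\epsilon$, so $\phi(f)\notin\mathcal{K}(X(E,\mathbf{N},\mathbf{S}))$. (Equivalently, your sequence argument becomes valid if you replace orthogonality of the $\xi_n$ by the stronger property that each $\xi_n$ is eventually orthogonal to every fixed compactly supported element, which your construction does provide; but then the sequence and the disjoint $s$-sections are unnecessary, and in a non-second-countable $E^1$ the sequential extraction would need nets anyway.) You should also select $v_0$ with $f(v_0)\neq 0$ and $v_0\notin E^0_{\mathrm{fin}}$ rather than $v_0\in\supp f\setminus E^0_{\mathrm{fin}}$, since $f$ may vanish at boundary points of its support. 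With these changes the rest of your outline goes through.
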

\begin{proof}
For $f \in C_0(E^0)$, by the Urysohn's Lemma (see \cite[Lemma~1.41]{Williams:Crossedproductsof07}), $f \in \ker(\phi)$ if and only if $f \in C_0(E_{\mathrm{sce}}^0)$. So $(\ker\phi)^{\perp}=C_0(E^0 \setminus \overline{E_{\mathrm{sce}}^0})$.

We compute $\phi^{-1}(\mathcal{K}(X(E,\mathbf{N},\mathbf{S})))$. Fix a nonnegative $f \in C_c(E_{\mathrm{fin}}^0)$. By definition of $E_{\mathrm{fin}}^0$, the set $r^{-1}(\supp(f))$ is compact. For each $e \in r^{-1}(\supp(f))$, there exists $\alpha_e \in \Lambda$, such that $e \in N_{\alpha_e}$. Choose a finite subset $F \subset r^{-1}(\supp(f))$ such that $\{N_{\alpha_e}\}_{e \in F}$ covers $r^{-1}(\supp(f))$. We use a partition of unity to get a finite collection of functions $\{h_e:e \in F \}\subset C_0(E^1,[0,1])$ such that $\supp(h_e) \subset N_{\alpha_e}$ for all $e \in F$, and $\sum_{e \in F}h_e=1$ on $r^{-1}(\supp(f))$. A straightforward calculation gives 
\begin{equation}\label{computation of phi(f) in the twisted graph correspondence}
\phi(f)=\sum_{e \in F}\Theta_{ \big(\sqrt{h_ef \circ r}^{\mathrm{Ind}_{\alpha_e}^{\alpha}}\big ),\big( \sqrt{h_ef\circ r}^{\mathrm{Ind}_{\alpha_e}^{\alpha}} \big) }.
\end{equation}
So $C_0(E_{\mathrm{fin}}^0) \subset \phi^{-1}(\mathcal{K}(X(E,\mathbf{N},\mathbf{S})))$. Conversely, fix $f \in \phi^{-1}(\mathcal{K}(X(E,\mathbf{N},\mathbf{S})))$. Suppose for a contradiction that $f \notin C_0(E_{\mathrm{fin}}^0)$. Then there exists $v \notin E_{\mathrm{fin}}^0$ such that $f(v) \neq 0$. By continuity of $f$ there exists an open neighborhood $U$ of $v$ such that $\vert f(U)\vert \geq \epsilon>0$. For $x_1, \dots, x_n, y_1, \dots, y_n \in C_c(E,\mathbf{N},\mathbf{S})$, let $K=\bigcup^n_{i=1} \supp([x_i\vert x_i])\cup\supp([y_i\vert y_i])$. By definition of $E_{\mathrm{fin}}^0$, the set $r^{-1}(U)$ is not contained in $K$. So there exists $e \in N_{\alpha_0} \cap r^{-1}(U) \setminus K$. The Urysohn's Lemma yields $z \in C_c(E,\mathbf{N},\mathbf{S})$ satisfying $\Vert z \Vert_{C_0(E^0)}=1$ and $z_{\alpha_0}(e)=1$. So
\begin{align*}
\Big\Vert \phi(f) -\sum_{i=1}^{n}\Theta_{x_i,y_i} \Big\Vert^2 &\geq \Big[\phi(f)(z)-\sum_{i = 1}^{n}\Theta_{x_i,y_i}(z)  \Big\vert \phi(f)(z)-\sum_{i = 1}^{n}\Theta_{x_i,y_i}(z)  \Big](e)
\\&=\vert f(r(e))z_{\alpha_0}(e)\vert^2\geq\epsilon^2.
\end{align*}
It follows that $\phi(f) \notin \mathcal{K}(X(E,\mathbf{N},\mathbf{S}))$, which is a contradiction. So $\phi^{-1}(\mathcal{K}(X(E,\mathbf{N},\mathbf{S})))=C_0(E_{\mathrm{fin}}^0)$. By definition of $E_{\mathrm{rg}}^0$, we have $J_{X(E,\mathbf{N},\mathbf{S})}=C_0(E_{\mathrm{rg}}^0)$.
\end{proof}

\section{A Generalized Result of Vasselli}

Patani in \cite{MR2873418} conjectured that Vasselli's result in \cite[Proposition~4.3]{MR1966825} described in Example~\ref{Vasselli's eg} is still true when the compactness condition is lifted. The proof for the locally compact case does not appear to have been sorted out. So in this section we give a proof of this conjecture. Before we do that, we need a technical proposition.

\begin{prop}\label{existence of an inj uni cov twisted Toep rep}
Let $E$ be a topological graph, let $\mathbf{N}=\{N_\alpha\}_{\alpha \in \Lambda}$ be a cover of $E^1$ by precompact open $s$-sections, and let $\mathbf{S}=\{s_{\alpha\beta}\}_{\alpha,\beta \in \Lambda}$ be a $1$-cocycle relative to $\mathbf{N}$. Then there exist a collection of linear maps $\{\psi_\alpha: C_0(N_\alpha) \to \mathcal{O}(E,\mathbf{N},\mathbf{S}) \}_{\alpha \in \Lambda}$ and a homomorphism $\pi:C_0(E^0) \to \mathcal{O}(E,\mathbf{N},\mathbf{S})$ such that
\begin{enumerate}
\item\label{preserve the left right module action} $\psi_\alpha(x \cdot f)=\psi_\alpha(x)\pi(f), \psi_\alpha(f \cdot x)=\pi(f)\psi_\alpha(x)$ for $x \in C_0(N_\alpha), f \in C_0(E^0)$;
\item\label{preserve the inner product} $\psi_\alpha(x)^*\psi_\beta(y)=\pi(\langle x,y \rangle_{C_0(E^0)} \times (s_{\alpha\beta}\vert_{N_{\alpha\beta}} \circ s\vert_{N_{\alpha\beta}}^{-1}))$ for $x \in C_0(N_\alpha), y \in C_0(N_\beta)$;
\item\label{continuity of psi_alpha} each $\psi_\alpha$ is norm-decreasing under the supremum norm of $C_0(N_\alpha)$;
\item\label{transition property} $\psi_{\alpha}(x)=\psi_{\beta}(x \times (s_{\beta\alpha} \vert_{N_{\beta\alpha}}) )$ for $x \in C_0(N_{\alpha\beta})$;
\item\label{cov condition} for any nonnegative $f \in C_c(E^0_{\textrm{rg}})$, any finite subset $F \subset \Lambda$, and any collection $\{h_\alpha \in C_c(N_\alpha,[0,1])\}_{\alpha \in F}$ such that $\sum_{\alpha \in F}h_\alpha=1$ on $r^{-1}(\supp(f))$, we have 
\[
\pi(f) = \sum_{\alpha \in F} \psi_\alpha\big(\sqrt{h_\alpha f\circ r}\big)\psi_\alpha \big(\sqrt{h_\alpha f\circ r}\big)^*;
\]
\item\label{C^*(psi_alpha,pi)=O(E,N,S)} $C^*(\psi_\alpha,\pi)=\mathcal{O}(E,\mathbf{N},\mathbf{S})$; and
\item\label{universal property} if a collection of linear maps $\{\psi_\alpha': C_0(N_\alpha) \to B \}_{\alpha \in \Lambda}$ and a homomorphism $\pi':C_0(E^0) \to B$ satisfying Properties~(\ref{preserve the left right module action})--(\ref{cov condition}), then there exists a homomorphism $h:\mathcal{O}(E,\mathbf{N},\mathbf{S}) \to B$, such that $h \circ \psi_\alpha=\psi_\alpha'$ for all $\alpha \in \Lambda$, and $h \circ \pi=\pi'$.
\end{enumerate}
\end{prop}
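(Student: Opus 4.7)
The plan is to build $(\psi_\alpha, \pi)$ directly from the universal injective covariant Toeplitz representation $(j_X, j_A)$ of the twisted graph correspondence: I set $\pi := j_A$ and, for each $\alpha \in \Lambda$ and $f \in C_c(N_\alpha)$, define
\[
\psi_\alpha(f) := j_X\bigl((f^{\Ind_\alpha^\gamma})_{\gamma \in \Lambda}\bigr).
\]
The inner product identity displayed just before Proposition~\ref{induced tuple generates C_c tuple} gives
\[
\bigl\langle (f^{\Ind_\alpha^\gamma})_\gamma,\, (g^{\Ind_\beta^\gamma})_\gamma \bigr\rangle_{C_0(E^0)} = \langle f, g\rangle_{C_0(E^0)} \times (s_{\alpha\beta}\vert_{N_{\alpha\beta}} \circ s\vert_{N_{\alpha\beta}}^{-1});
\]
taking $\alpha = \beta$ and using that $N_\alpha$ is an $s$-section yields $\Vert (f^{\Ind_\alpha^\gamma})_\gamma \Vert_{C_0(E^0)} = \Vert f \Vert_\infty$, and the Toeplitz property of $(j_X,j_A)$ then gives $\Vert \psi_\alpha(f) \Vert \leq \Vert f \Vert_\infty$. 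This establishes (\ref{continuity of psi_alpha}) and allows me to extend each $\psi_\alpha$ continuously to $C_0(N_\alpha)$.

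Properties (\ref{preserve the left right module action})--(\ref{transition property}) then reduce to routine calculations on induced sections. For (\ref{preserve the left right module action}) one checks that $(f^{\Ind_\alpha^\gamma} \cdot g)_\gamma$ equals the induced section of $f(g\circ s\vert_{N_\alpha})$, and dually for the left action; (\ref{preserve the inner product}) is the displayed inner product identity combined with the Toeplitz axiom $j_X(x)^*j_X(y) = j_A(\langle x, y\rangle_{C_0(E^0)})$; (\ref{transition property}) follows from the cocycle identity $s_{\gamma\alpha} = s_{\gamma\beta}s_{\beta\alpha}$ on $\overline{N_{\gamma\alpha\beta}}$, which shows pointwise that $(x^{\Ind_\alpha^\gamma})_\gamma = ((x \times s_{\beta\alpha}\vert_{N_{\beta\alpha}})^{\Ind_\beta^\gamma})_\gamma$ for $x \in C_0(N_{\alpha\beta})$. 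Property (\ref{cov condition}) is obtained by applying $j_X^{(1)}$ to formula (\ref{computation of phi(f) in the twisted graph correspondence}) and invoking covariance of $(j_X, j_A)$. Finally, (\ref{C^*(psi_alpha,pi)=O(E,N,S)}) follows because Proposition~\ref{induced tuple generates C_c tuple} and the norm estimate in (\ref{continuity of psi_alpha}) show that $\bigcup_\alpha \psi_\alpha(C_0(N_\alpha))$ has dense linear span in $j_X(X(E,\mathbf{N},\mathbf{S}))$, so (\ref{simplification of C^*(psi,pi)}) gives $C^*(\psi_\alpha, \pi) = \mathcal{O}(E,\mathbf{N},\mathbf{S})$.

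For the universal property (\ref{universal property}), given $(\psi_\alpha', \pi')$ satisfying (\ref{preserve the left right module action})--(\ref{cov condition}), I will construct a covariant Toeplitz representation $(\Psi, \pi')$ of $X(E,\mathbf{N},\mathbf{S})$ in $B$ and then apply the universal property of $\mathcal{O}_{X(E,\mathbf{N},\mathbf{S})}$ to obtain $h$. Using Proposition~\ref{induced tuple generates C_c tuple}, I set $\Psi\bigl(\sum_i (f_i^{\Ind_{\alpha_i}^\gamma})_\gamma\bigr) := \sum_i \psi_{\alpha_i}'(f_i)$. The main obstacle is that an element of $C_c(E,\mathbf{N},\mathbf{S})$ admits many decompositions as a sum of induced sections, so one must check the recipe is decomposition-independent; both well-definedness and the isometric bound needed to extend to $X(E,\mathbf{N},\mathbf{S})$ follow simultaneously from (\ref{preserve the inner product}) and the $C^*$-identity via
\[
\Bigl(\sum_i \psi_{\alpha_i}'(f_i)\Bigr)^*\Bigl(\sum_j \psi_{\alpha_j}'(f_j)\Bigr) = \pi'\Bigl(\Bigl\langle \sum_i (f_i^{\Ind_{\alpha_i}^\gamma})_\gamma,\, \sum_j (f_j^{\Ind_{\alpha_j}^\gamma})_\gamma \Bigr\rangle_{C_0(E^0)}\Bigr).
\]
The pair $(\Psi, \pi')$ is then Toeplitz by (\ref{preserve the left right module action})--(\ref{preserve the inner product}) and covariant by (\ref{cov condition}) combined with (\ref{computation of phi(f) in the twisted graph correspondence}), so universality of $\mathcal{O}_{X(E,\mathbf{N},\mathbf{S})}$ delivers the desired homomorphism $h$.
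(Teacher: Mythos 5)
Your proposal is correct and follows essentially the same route as the paper: both define $\psi_\alpha$ by composing the universal covariant representation with the induced sections $(f^{\Ind_\alpha^\gamma})_\gamma$, verify (1)--(6) from the inner-product identity and Equation~(\ref{computation of phi(f) in the twisted graph correspondence}), and prove (7) by using Proposition~\ref{induced tuple generates C_c tuple} together with the $C^*$-identity to assemble a well-defined covariant Toeplitz representation from $(\psi_\alpha',\pi')$. The only cosmetic difference is that you first define $\psi_\alpha$ on $C_c(N_\alpha)$ and extend by continuity, while the paper works directly on $C_0(N_\alpha)$.
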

\begin{proof}
Let $(\psi,\pi)$ be the injective universal covariant Toeplitz representation of $X(E,\mathbf{N},\mathbf{S})$ that generates $\mathcal{O}(E,\mathbf{N},\mathbf{S})$. For each $\alpha_0 \in \Lambda$, define a linear map $\psi_{\alpha_0}:C_0(N_{\alpha_0}) \to \mathcal{O}(E,\mathbf{N},\mathbf{S})$ by $\psi_{\alpha_0}(x):=\psi((x^{\Ind_{\alpha_0}^{\alpha}})_{\alpha \in \Lambda})$. It is straightforward to verify that $\{\psi_\alpha,\pi\}_{\alpha \in \Lambda}$ satisfies Properties~(\ref{preserve the left right module action})--(\ref{transition property}), (\ref{C^*(psi_alpha,pi)=O(E,N,S)}).

We check Property~(\ref{cov condition}). Fix a nonnegative function $f \in C_c(E^0_{\textrm{rg}})$, a finite subset $F \subset \Lambda$, and a collection $\{h_\alpha \in C_c(N_\alpha,[0,1])\}_{\alpha \in F}$ such that $\sum_{\alpha \in F}h_\alpha=1$ on $r^{-1}(\supp(f))$. Equation~(\ref{computation of phi(f) in the twisted graph correspondence}) gives
\[
\phi(f)=\sum_{\alpha \in F}\Theta_{ \big(\sqrt{h_\alpha f \circ r}^{\mathrm{Ind}_{\alpha}^{\beta}}\big )_{\beta \in \Lambda},\big( \sqrt{h_\alpha f\circ r}^{\mathrm{Ind}_{\alpha}^{\beta}} \big)_{\beta \in \Lambda} }.
\]
So by the covariance of $(\psi,\pi)$ and by definition of $\{\psi_\alpha\}_{\alpha \in \Lambda}$, we have
\begin{align*}
\pi(f)=\psi^{(1)}(\phi(f))&=\sum_{\alpha \in F}\psi\Big(\big(\sqrt{h_\alpha f \circ r}^{\mathrm{Ind}_{\alpha}^{\beta}}\big )_{\beta \in \Lambda}\Big)\psi\Big(\big(\sqrt{h_\alpha f \circ r}^{\mathrm{Ind}_{\alpha}^{\beta}}\big )_{\beta \in \Lambda}\Big)^*
\\&=\sum_{\alpha \in F} \psi_\alpha\big(\sqrt{h_\alpha f\circ r}\big)\psi_\alpha \big(\sqrt{h_\alpha f\circ r}\big)^*.
\end{align*}

Finally, we verify Property~(\ref{universal property}). Fix a collection of linear maps $\{\psi_\alpha': C_0(N_\alpha) \to B \}_{\alpha \in \Lambda}$ and a homomorphism $\pi':C_0(E^0) \to B$ satisfying Properties~(\ref{preserve the left right module action})--(\ref{cov condition}). For arbitrary $\alpha_1, \dots, \alpha_n \in \Lambda, x_i \in C_0(N_{\alpha_i})$, if $\sum_{i=1}^{n}(x_i^{\Ind_{\alpha_i}^{\alpha}})=0$, then 
\[
\Big\langle \sum_{i=1}^{n}(x_i^{\Ind_{\alpha_i}^{\alpha}}),\sum_{i=1}^{n}(x_i^{\Ind_{\alpha_i}^{\alpha}}) \Big\rangle_{C_0(E^0)}=\sum_{i,j=1}^{n}\langle x_i,x_j \rangle_{C_0(E^0)} \times (s_{\alpha_i \alpha_j}\vert_{N_{\alpha_i \alpha_j}} \circ s\vert_{N_{\alpha_i \alpha_j}}^{-1})=0.
\]
So
\[
\Big(\sum_{i=1}^{n}\psi_{\alpha_i}'(x_i)\Big)^*\Big(\sum_{i=1}^{n}\psi_{\alpha_i}'(x_i)\Big)=\sum_{i,j=1}^{n}\pi'(\langle x_i,x_j \rangle_{C_0(E^0)} \times (s_{\alpha_i \alpha_j}\vert_{N_{\alpha_i \alpha_j}} \circ s\vert_{N_{\alpha_i \alpha_j}}^{-1}))=0.
\]
Proposition~\ref{induced tuple generates C_c tuple} gives a Toeplitz representation $(\psi',\pi')$ of $X(E,\mathbf{N},\mathbf{S})$ in $B$ such that $\psi'((x^{\Ind_{\alpha_0}^{\alpha}})_{\alpha \in \Lambda})=\psi_{\alpha_0}'(x)$ for $\alpha_0 \in \Lambda$, and $x \in C_0(N_{\alpha_0})$. We show that $(\psi',\pi')$ is covariant. Fix a nonnegative function $f \in C_c(E^0_{\textrm{rg}})$. By definition of $E^0_{\textrm{rg}}$, we have $r^{-1}(\supp(f))$ is compact. For any $e \in r^{-1}(\supp(f))$, there exists $\alpha_e \in \Lambda$, such that $e \in N_{\alpha_e}$. There exists a finite subset $F \subset r^{-1}(\supp(f))$ such that $\{N_{\alpha_e}\}_{e \in F}$ covers $r^{-1}(\supp(f))$. We use a partition of unity to get a finite collection $\{h_e \in C_c(N_{\alpha_e},[0,1])\}_{e \in F}$ such that $\sum_{e \in F}h_e=1$ on $r^{-1}(\supp(f))$. Equation~(\ref{computation of phi(f) in the twisted graph correspondence}) gives
\[
\phi(f)=\sum_{e \in F}\Theta_{ \big(\sqrt{h_e f \circ r}^{\mathrm{Ind}_{\alpha_e}^{\alpha}}\big )_{\alpha \in \Lambda},\big(\sqrt{h_e f \circ r}^{\mathrm{Ind}_{\alpha_e}^{\alpha}}\big )_{\alpha \in \Lambda} }.
\]
By Property~(\ref{cov condition}) and by definition of $\psi'$, we have 
\begin{align*}
\pi'(f)&=\sum_{e \in F} \psi_{\alpha_e}'\big(\sqrt{h_e f\circ r}\big)\psi_{\alpha_e}' \big(\sqrt{h_e f\circ r}\big)^*=\sum_{e \in F}\psi'(\sqrt{h_e f \circ r}^{\mathrm{Ind}_{\alpha_e}^{\alpha}})\psi'(\sqrt{h_e f \circ r}^{\mathrm{Ind}_{\alpha_e}^{\alpha}})^*
\\&=\psi'^{(1)}(\phi(f)).
\end{align*}
So $(\psi',\pi')$ is covariant by Proposition~\ref{phi^{-1}(K(X)) intersects ker(phi^{perp}) of X_{E,N,S}}. By the universal property of $(\psi,\pi)$, there is a homomorphism $h:\mathcal{O}(E,\mathbf{N},\mathbf{S}) \to B$, such that $h \circ \psi=\psi'$ and $h \circ \pi=\pi'$. By definitions of $\{\psi_\alpha\}_{\alpha \in \Lambda}$ and $\psi'$, for $x \in C_0(N_{\alpha_0})$, we have
\[
h \circ \psi_{\alpha_0}(x)=h \circ \psi(x^{\mathrm{Ind}_{\alpha_0}^{\alpha}})=\psi'(x^{\mathrm{Ind}_{\alpha_0}^{\alpha}})=\psi_{\alpha_0}'(x). \qedhere
\]
\end{proof}

The following theorem is a generalization of a Vasselli's result in \cite[Proposition~4.3]{MR1966825}.

\begin{thm}\label{proof of Vasselli's result}
Let $T$ be a locally compact Hausdorff space, let $\mathbf{N}=\{N_\alpha\}_{\alpha \in \Lambda}$ be a cover of $T$ by precompact open sets, and let $\mathbf{S}=\{s_{\alpha\beta}\}_{\alpha,\beta \in \Lambda}$ be a $1$-cocycle relative to $\mathbf{N}$. Define a topological graph $E:=(T,T,\id,\id)$, and define a locally compact Hausdorff space $\mathbf{B}:=\amalg_{\alpha \in \Lambda}(N_\alpha \times \mathbb{T}) / ((t,z),\alpha) \sim ((t,z s_{\alpha\beta}(t)),\beta)$.  Then the twisted topological graph algebra $\mathcal{O}(E,\mathbf{N},\mathbf{S})$ is isomorphic to $C_0(\mathbf{B})$.
\end{thm}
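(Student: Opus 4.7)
The plan is to build a family $\{\psi'_\alpha: C_0(N_\alpha) \to C_0(\mathbf{B})\}_{\alpha\in\Lambda}$ and a $*$-homomorphism $\pi': C_0(T) \to C_0(\mathbf{B})$ satisfying Properties~(\ref{preserve the left right module action})--(\ref{cov condition}) of Proposition~\ref{existence of an inj uni cov twisted Toep rep}, then invoke the universal property~(\ref{universal property}) to obtain a homomorphism $h:\mathcal{O}(E,\mathbf{N},\mathbf{S})\to C_0(\mathbf{B})$, and finally show that $h$ is bijective.

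For the construction, let $p:\mathbf{B}\to T$ be the projection $[(t,z),\alpha]\mapsto t$. Define $\pi'(f) := f\circ p$; this lands in $C_0(\mathbf{B})$ since $p$ is continuous with compact fibre $\mathbb{T}$, hence proper. For $x \in C_0(N_\alpha)$ define $\psi'_\alpha(x)\in C_0(\mathbf{B})$ by $\psi'_\alpha(x)([(t,z),\alpha]):=x(t)z$ and extend by zero off $p^{-1}(N_\alpha)$; well-definedness on the quotient is automatic, and continuity uses that $x$ extends by zero to a continuous function on $T$. Properties~(\ref{preserve the left right module action}), (\ref{continuity of psi_alpha}), and (\ref{transition property}) are then immediate from $r=s=\id$, $|z|=1$, and the identity $s_{\beta\alpha}z_\beta = z_\alpha$ on $p^{-1}(N_{\alpha\beta})$ built into the quotient. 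Property~(\ref{preserve the inner product}) is where the cocycle enters substantively: on $p^{-1}(N_{\alpha\beta})$ the product $\psi'_\alpha(x)^*\psi'_\beta(y)$ picks up the phase $\overline{z_\alpha}z_\beta = s_{\alpha\beta}\circ p$, and this matches the right-hand side of~(\ref{preserve the inner product}) because $\langle x,y\rangle_{C_0(E^0)}(v)=\overline{x(v)}y(v)$ when $s=\id$. For Property~(\ref{cov condition}) note that $E=(T,T,\id,\id)$ has $E^0_{\mathrm{sce}}=\emptyset$ and $E^0_{\mathrm{rg}}=T$, and the covariance identity reduces to the pointwise statement $\sum_\alpha h_\alpha(t)f(t) = f(t)$ on $\supp(f)$.

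For surjectivity I would apply Stone-Weierstrass. The image of $h$ is a closed $*$-subalgebra of $C_0(\mathbf{B})$ containing $\pi'(C_0(T))$, which already vanishes nowhere, and all the $\psi'_\alpha(C_0(N_\alpha))$. Two points $b_1\ne b_2$ of $\mathbf{B}$ with distinct projections are separated by some $\pi'(f)$; two points over the same $t\in N_\alpha$ differ in their $\alpha$-coordinate $z_\alpha$ and so are separated by any $\psi'_\alpha(x)$ with $x(t)\ne 0$. Hence $h$ is surjective.

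For injectivity I would invoke the gauge-invariant uniqueness theorem for Cuntz-Pimsner algebras from \cite{Katsura:JFA04} applied to the correspondence $X(E,\mathbf{N},\mathbf{S})$. The free continuous $\mathbb{T}$-action $w\cdot[(t,z),\alpha]:=[(t,wz),\alpha]$ on $\mathbf{B}$ induces an action $\beta$ of $\mathbb{T}$ on $C_0(\mathbf{B})$ that fixes $\pi'$ and scales each $\psi'_\alpha(x)$ by $w$, so $h$ intertwines $\beta$ with the canonical gauge action on $\mathcal{O}(E,\mathbf{N},\mathbf{S})$; since $p$ is surjective, $h|_{C_0(T)}=\pi'$ is injective; the gauge-invariant uniqueness theorem then forces $h$ to be injective. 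I expect the main obstacle to be the cocycle bookkeeping in checking well-definedness of $\psi'_\alpha$ on the quotient and in Property~(\ref{preserve the inner product}); once the local trivializations of $\mathbf{B}$ are organized correctly, everything else reduces to routine pointwise calculations on $\mathbf{B}$.
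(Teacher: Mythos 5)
Your proposal is correct, but it takes a genuinely different route from the paper. The paper also starts from Proposition~\ref{existence of an inj uni cov twisted Toep rep}, but then proves that $\mathcal{O}(E,\mathbf{N},\mathbf{S})$ is \emph{commutative} (the key computation being $\psi_\alpha(x)\psi_\alpha(y)^*=\pi(xy^*)$, extracted from the covariance relation~(\ref{cov condition})) and identifies its Gelfand spectrum with $\mathbf{B}$ by hand: it builds an explicit map $\Gamma$ from characters to points $((t,z),\alpha)$ of $\mathbf{B}$ and verifies injectivity, surjectivity (using the universal property~(\ref{universal property}) to manufacture a character from each point of $\mathbf{B}$ — essentially your fibrewise evaluation maps $\psi'_\alpha(x)=zx(t)s_{\alpha_0\alpha}(t)$), continuity, and openness via net arguments. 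You instead assemble those same evaluation maps into a single global covariant representation in $C_0(\mathbf{B})$, get $h$ from~(\ref{universal property}), and replace the spectrum computation by Stone--Weierstrass (surjectivity) plus the gauge-invariant uniqueness theorem of \cite{Katsura:JFA04} (injectivity, using the fibre rotation action on $\mathbf{B}$, which is well defined on the quotient since $z\mapsto wz$ commutes with $z\mapsto zs_{\alpha\beta}(t)$). Your checks of Properties~(\ref{preserve the left right module action})--(\ref{cov condition}) are right, including the sign of the cocycle ($z_\beta=z_\alpha s_{\alpha\beta}(t)$ gives $\overline{z_\alpha}z_\beta=s_{\alpha\beta}(t)$, matching~(\ref{preserve the inner product})), and $\pi'$ lands in $C_0(\mathbf{B})$ and is injective because $p$ is proper and surjective. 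What your route buys is that you never need to prove commutativity or wrestle with the point-set topology of $\Gamma$; the cost is reliance on the gauge-invariant uniqueness theorem, which the paper deliberately avoids at this stage (it only deploys gauge-action techniques in Section~7), and a small extra check that the induced $\mathbb{T}$-action on $C_0(\mathbf{B})$ is strongly continuous and restricts to $C^*(\psi',\pi')$ — both immediate here. The paper's proof is more elementary and self-contained; yours is shorter and more structural.
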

\begin{proof}
By Proposition~\ref{existence of an inj uni cov twisted Toep rep}, there exist a collection of linear maps $\{\psi_\alpha: C_0(N_\alpha) \to \mathcal{O}(E,\mathbf{N},\mathbf{S}) \}_{\alpha \in \Lambda}$ and a homomorphism $\pi:C_0(E^0) \to \mathcal{O}(E,\mathbf{N},\mathbf{S})$ satisfying Properties~(\ref{preserve the left right module action})--(\ref{universal property}) of Proposition~\ref{existence of an inj uni cov twisted Toep rep}.

We prove that $\mathcal{O}(E,\mathbf{N},\mathbf{S})$ is commutative. Since the set $\{\psi_\alpha(x),\psi_\alpha(x)^*,\pi(f):\alpha \in \Lambda,x \in C_0(N_\alpha), f \in C_0(T)\}$ generates $\mathcal{O}(E,\mathbf{N},\mathbf{S})$, it is sufficient to show that these generators commute with each other. We check that each $\psi_\alpha(x)$ commutes with each $\psi_\beta(y)^*$; the othe commutation relations are straightforward. For $\alpha \in \Lambda$, and $x, y \in C_0(N_\alpha)$, we claim that $\psi_\alpha(x)\psi_\alpha(y)^*=\pi(xy^*)$. Suppose that $x ,y$ are both nonnegative and their supports are contained in $N_\alpha$. Simple calculation shows that $\psi_\alpha(x)\psi_\alpha(y)^*=\psi_\alpha(\sqrt{xy})\psi_\alpha(\sqrt{xy})^*$. Since $\{\psi_\alpha,\pi\}_{\alpha \in \Lambda}$ satisfies Property~(\ref{cov condition}) of Proposition~\ref{existence of an inj uni cov twisted Toep rep}, we have $\psi_\alpha(x)\psi_\alpha(y)^*=\pi(xy)$. The linearity and the continuity of $\psi_\alpha$ validate the claim. Take $\alpha, \beta \in \Lambda, x \in C_0(N_\alpha)$, and $y \in C_0(N_\beta)$. Let $(E_i)_{i \in I}$ be an approximate identity of $C_0(N_\alpha)$. Then $\psi_\alpha(x)\pi(E_i)\psi_\beta(y)^* \to \psi_\alpha(x)\psi_\beta(y)^*$ by continuity of $\psi_\alpha$. On the other hand, 
\begin{align*}
\psi_\alpha(x)\pi(E_i)\psi_\beta(y)^*&=\psi_\alpha(x)\psi_\beta(yE_i)^*
\\&=\psi_\alpha(x)\psi_\alpha((yE_i)\times(s_{\alpha\beta} \vert_{N_{\alpha\beta}}))^*
\\&=\pi(x ((y^*E_i)\times(s_{\beta\alpha} \vert_{N_{\alpha\beta}})) ) \text{ (by the claim) }
\\&=\psi_\beta(yE_i)^*\psi_\alpha(x)
\\&\to\psi_\beta(y)^*\psi_\alpha(x) \text{ (by continuity of $\psi_\beta$) }.
\end{align*}
So $\psi_\alpha(x)\psi_\beta(y)^*=\psi_\beta(y)^*\psi_\alpha(x)$. Hence $\mathcal{O}(E,\mathbf{N},\mathbf{S})$ is commutative.

We show the character space of $\mathcal{O}(E,\mathbf{N},\mathbf{S})$ is homeomorphic to $\mathbf{B}$. Fix a nonzero homomorphism $\varphi:\mathcal{O}(E,\mathbf{N},\mathbf{S}) \to \mathbb{C}$. We claim that $\varphi \circ \pi$ is not a zero homomorphism. Suppose it is a zero map, for a contradiction. Then for $x \in C_0(N_\alpha)$, we have 
\[
\varphi \circ \psi_\alpha(x)^*\varphi \circ \psi_\alpha(x)=\varphi(\pi(x^*x))=0.
\]
Since $C^*(\psi_\alpha,\pi)=\mathcal{O}(E,\mathbf{N},\mathbf{S})$, we have $\varphi=0$, which is a contradiction. So $\varphi \circ \pi$ is not a zero homomorphism. Then there exists $t \in T$ such that $\varphi \circ \pi(f)=f(t)$ for all $f \in C_0(T)$. Take $\alpha \in \Lambda$ such that $t \in N_\alpha$ and take $x \in C_0(N_\alpha)$ with $x(t)=1$, we have $\varphi \circ \psi_\alpha(x) \in \mathbb{T}$ since $\varphi$ is a homomorphism. If $t \in N_\beta$, and $y \in C_0(N_\beta)$ such that $y(t)=1$, then it is not hard to see that $\varphi(\psi_\alpha(x))s_{\alpha\beta}(t)=\varphi(\psi_\beta(y))$. So there is a well-defined map $\Gamma:\widehat{\mathcal{O}}(E,\mathbf{N},\mathbf{S}) \to \mathbf{B}$ such that $\Gamma(\varphi)=((t,\varphi\circ\psi_\alpha(x)),\alpha)$ for each $\varphi \in \widehat{\mathcal{O}}(E,\mathbf{N},\mathbf{S})$.

For $\varphi, \rho \in \widehat{\mathcal{O}}(E,\mathbf{N},\mathbf{S})$, if $\Gamma(\varphi)=\Gamma(\rho)$, then there exists $t \in T$, such that $\varphi \circ \pi(f)=\rho \circ \pi(f)=f(t)$, for all $f \in C_0(T)$. For any $\alpha \in \Lambda$, and for any $x \in C_0(N_\alpha)$, if $x(t) \neq 0$, then $t \in N_\alpha$. Since $\Gamma(\varphi)=\Gamma(\rho)$, by definition of $\Gamma$, we have $(t,\varphi \circ \psi_\alpha(x/x(t)),\alpha)=(t,\rho \circ \psi_\alpha(x/x(t)),\alpha)$. So $\varphi \circ \psi_\alpha(x)=\rho \circ \psi_\alpha(x)$. If $x(t)=0$, then $\varphi \circ \psi_\alpha(x)=\rho \circ \psi_\alpha(x)=0$. Hence $\varphi=\rho$ and $\Gamma$ is injective.

Take $t \in N_{\alpha_0}$, and $z \in \mathbb{T}$. For $\alpha \in \Lambda$, if $t \in N_\alpha$, then define a linear map $\psi_\alpha':C_0(N_\alpha) \to \mathbb{C}$ by $\psi_\alpha'(x):=zx(t)s_{\alpha_0\alpha}(t)$. If $t \notin N_\alpha$, then define $\psi_\alpha': C_0(N_\alpha) \to \mathbb{C}$ to be the zero map. Define a homomorphism $\pi':C_0(T) \to \mathbb{C}$ by $\pi'(f):=f(t)$. It is straightforward to see that $\{\psi_\alpha',\pi'\}_{\alpha \in \Lambda}$ satisfies Properties~(\ref{preserve the left right module action})--(\ref{transition property}) of Proposition~\ref{existence of an inj uni cov twisted Toep rep}. We prove Property~(\ref{cov condition}) of Proposition~\ref{existence of an inj uni cov twisted Toep rep} of $\{\psi_\alpha',\pi'\}_{\alpha \in \Lambda}$. For any nonnegative function $f \in C_c(T)$, any finite subset $F \subset \Lambda$, and any collection $\{h_\alpha \in C_c(N_\alpha,[0,1])\}_{\alpha \in F}$ such that $\sum_{\alpha \in F}h_\alpha=1$ on $\supp(f)$, we have 
\begin{align*}
\sum_{\alpha \in F}\psi_\alpha'(\sqrt{h_\alpha f})\psi_\alpha'(\sqrt{h_\alpha f})^*=\sum_{\alpha \in F}h_\alpha(t)f(t)=f(t)=\pi'(t).
\end{align*}
Since $\{\psi_\alpha,\pi\}_{\alpha \in \Lambda}$ satisfies Property~(\ref{universal property}) of Proposition~\ref{existence of an inj uni cov twisted Toep rep}, there exists a homomorphism $\varphi:\mathcal{O}(E,\mathbf{N},\mathbf{S}) \to \mathbb{C}$, such that $\varphi \circ \psi_\alpha=\psi_\alpha'$ for all $\alpha \in \Lambda$, and $\varphi \circ \pi=\pi'$. So $\varphi$ is a nonzero homomorphism. The Urysohn's Lemma gives $x \in C_0(N_{\alpha_0})$ such that $x(t)=1$. Then $\varphi \circ \psi_{\alpha_0}(x)=\psi_{\alpha_0}'(x)=zx(t)s_{\alpha_0\alpha_0}(t)=z$. So $\Gamma(\varphi)=((t,z),\alpha_0)$, and $\Gamma$ is a bijection with the inverse $\Gamma^{-1}((t,z),\alpha_0)=\varphi$.

Next we prove that $\Gamma$ is continuous. Fix a convergent net $(\varphi_a)_{a \in A} \subset \widehat{\mathcal{O}}(E,\mathbf{N},\mathbf{S})$ with the limit $\varphi$. Then there exist $t_a, t \in T$ such that $\varphi_a \circ \pi(f)=f(t_a)$ and $\varphi \circ \pi(f)=f(t)$ for all $f \in C(T)$. Since $\varphi_a \to \varphi$, we have $\varphi_a \circ \pi \to \varphi \circ \pi$. So $t_a \to t$. It is fine to suppose that there exist $\alpha_0 \in \Lambda$ and a compact set $K \subset N_{\alpha_0}$ such that $t_a,t \in K$. By the Urysohn's Lemma, there is $x \in C_0(N_{\alpha_0})$ such that $x(K)=1$. By Definition of $\Gamma$, we have $\Gamma(\varphi_a)=((t_a,\varphi_a\circ\psi_{\alpha_0}(x)),\alpha_0)$ and $\Gamma(\varphi)=((t,\varphi \circ \psi_{\alpha_0}(x)),\alpha_0)$. Hence $\Gamma(\varphi_a) \to \Gamma(\varphi)$ because $\varphi_a \to \varphi$.

Finally we show that $\Gamma$ is open. Fix a convergent net $((t_a,z_a),\alpha_a)_{a \in A} \to ((t,z),\alpha_0)$ in $\mathbf{B}$. Let $\Gamma^{-1}((t_a,z_a),\alpha_a)=\varphi_a$, and let $\Gamma^{-1}((t,z),\alpha_0)=\varphi$. Fix $\alpha \in \Lambda$, and fix $x \in C_0(N_\alpha)$. Suppose that $t \in N_\alpha$. Then there exists $a_0 \in A$, such that $t_a \in N_{\alpha_0\alpha}$ whenever $a \geq a_0, (t_a)_{a \geq a_0} \to t$, and $(s_{\alpha_a \alpha_0}(t_a)z_a)_{a \geq a_0} \to z$. By definition of $\Gamma^{-1}$, we have when $a \geq a_0$,
\begin{align*}
\varphi_a \circ \psi_\alpha(x)=z_ax(t_a)s_{\alpha_a \alpha}(t_a)=z_ax(t_a)s_{\alpha_a\alpha_0}(t_a)s_{\alpha_0\alpha}(t_a) \to zx(t)s_{\alpha_0\alpha}(t)=\varphi \circ \psi_\alpha(x).
\end{align*}
Suppose that $t \notin N_\alpha$. By definition of $\Gamma^{-1}$, we have $\varphi \circ \psi_\alpha(x)=0$, and $\vert \varphi_a \circ \psi_\alpha(x)\vert=\vert x(t_a) \vert \to \vert x(t)\vert=0$. Fix $f \in C_0(T)$. Then $\varphi_a \circ \pi(f)=f(t_a) \to f(t)= \varphi \circ \pi(f)$. Hence $\varphi_a \to \varphi$ because $C^*(\psi_\alpha,\pi)$ generates $\mathcal{O}(E,\mathbf{N},\mathbf{S})$. 
\end{proof}

\section{Technical Results}

In this section, we develop technical tools that we will need in later sections. These are analogous to technical results of Katsura for ordinary topological graph algebras \cite{Katsura:TAMS04, Katsura:IJM06, Katsura:ETDS06}.

Throughout this section, we fix a topological graph $E$, a cover $\mathbf{N}=\{N_\alpha\}_{\alpha \in \Lambda}$ of $E^1$ by precompact open $s$-sections, and a $1$-cocycle $\mathbf{S}=\{s_{\alpha\beta}\}_{\alpha,\beta \in \Lambda}$ relative to $\mathbf{N}$. 

First of all, we connect the Fock space of the twisted graph correspondence $X(E,\mathbf{N},\mathbf{S})$ with the finite-path space of $E$. 

Let $n \geq 1$. Define the finite-path space 
\[
E^n:=\{(e_1,\dots,e_n) \in \prod_{i=1}^{n}E^1: s(e_i)=r(e_{i+1}), i=1,\dots,n-1 \}.
\]
Define $r^n:E^n \to E^0$ by $r^n(e_1,\dots,e_n):=r(e_1)$, define $s^n:E^n \to E^0$ by $s^n(e_1,\dots,e_n):=s(e_n)$. Then $E_n:=(E^0,E^n,r^n,s^n)$ is a topological graph. Define a cover $\mathbf{N}^n:=\{(N_{\alpha_1} \times \cdots \times N_{\alpha_n}) \cap E^n\}_{\alpha_1,\dots,\alpha_n \in \Lambda}$ of $E^n$ by precompact open $s^n$-sections. Define a $1$-cocycle $\mathbf{S}^n:=\{s_{\alpha_1 \beta_1} \diamond \cdots \diamond s_{\alpha_n \beta_n}\}$ relative to $\mathbf{N}^n$ by $s_{\alpha_1 \beta_1} \diamond \cdots \diamond s_{\alpha_n \beta_n}((e_i)_{i=1}^{n}):=s_{\alpha_1 \beta_1}(e_1) \cdots s_{\alpha_n \beta_n}(e_n)$ for all $(e_i)_{i=1}^{n} \in \overline{N_{\alpha_1 \beta_1}\times \cdots \times N_{\alpha_n \beta_n} \cap E^n}$. For $x_1, \dots, x_n \in C_c(E_n,\mathbf{N}^n,\mathbf{S}^n), \alpha_1, \dots, \alpha_n \in \Lambda$, and $(e_1,\dots,e_n) \in \overline{N_{\alpha_1}\times\dots\times N_{\alpha_n} \cap E^n}$, define $(x_1 \diamond\dots\diamond x_n)_{\alpha_1,\dots,\alpha_n}(e_1,\dots,e_n):=x_{1,\alpha_1}(e_1)\dots x_{n,\alpha_n}(e_n)$. Then $x_1 \diamond\dots\diamond x_n \in C_c(E_n,\mathbf{N}^n,\mathbf{S}^n)$. Let $C_c(E_0,\mathbf{N}^0,\mathbf{S}^0):=C_c(E^0)$, and $X(E_0,\mathbf{N}^0,\mathbf{S}^0):=C_0(E^0)$.

The following proposition is a generalization of \cite[Proposition~1.27]{Katsura:TAMS04}.

\begin{prop}\label{X(E,N,S)^{otimes n} isomorphic with X(E_n,N^n,S^n)}
For each $n \geq 1$, there exists an isomorphism $\Phi_n: X(E,\mathbf{N},\mathbf{S})^{\otimes n} \to X(E_n,\mathbf{N}^n,\mathbf{S}^n)$ of $C^*$-correspondences over $C_0(E^0)$ such that $\Phi_n(x_1 \otimes\dots\otimes x_n)=x_1 \diamond\dots\diamond x_n$, for all $x_1, \dots, x_n \in C_c(E,\mathbf{N},\mathbf{S})$. Moreover, 
\begin{equation}\label{compute C_c(E_n,N^n,S^n)}
C_c(E_n,\mathbf{N}^n,\mathbf{S}^n)=\lsp\{x_1 \diamond\dots\diamond x_n: x_1,\dots,x_n \in C_c(E,\mathbf{N},\mathbf{S}) \}.
\end{equation}
Hence
\begin{equation}\label{compute X(E_n,N^n,S^n)}
X(E_n,\mathbf{N}^n,\mathbf{S}^n)=\overline{\lsp}\{x_1 \diamond\dots\diamond x_n: x_1,\dots,x_n \in C_c(E,\mathbf{N},\mathbf{S}) \}.
\end{equation}
\end{prop}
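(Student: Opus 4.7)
The plan is to construct $\Phi_n$ explicitly on elementary tensors, verify it is isometric by a direct inner product calculation, and then establish surjectivity via an application of Proposition~\ref{induced tuple generates C_c tuple} to the higher-order graph $E_n$.

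First I would set $\Phi_n(x_1 \otimes \dots \otimes x_n) := x_1 \diamond \dots \diamond x_n$ on elementary tensors, after verifying that the right side lies in $C_c(E_n,\mathbf{N}^n,\mathbf{S}^n)$: the cocycle identity for $\mathbf{S}^n$ reduces coordinatewise to the cocycle identity for each $x_i$ under $\mathbf{S}$, and a direct computation gives
\[
[x_1 \diamond \dots \diamond x_n \mid x_1 \diamond \dots \diamond x_n](e_1,\dots,e_n) = \prod_{i=1}^{n} [x_i \mid x_i](e_i),
\]
whose support is contained in the compact set $(\supp([x_1\mid x_1]) \times \dots \times \supp([x_n\mid x_n])) \cap E^n$. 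Next I would show by induction on $n$ that $\Phi_n$ preserves inner products. On the target side, the analogous product formula for $[\,\cdot\mid\cdot\,]$ combined with the inner product in Definition~\ref{define C_c(E,N,S)} yields
\[
\langle x_1 \diamond \dots \diamond x_n, y_1 \diamond \dots \diamond y_n\rangle_{C_0(E^0)}(v) = \sum_{s^n(e_1,\dots,e_n)=v} \prod_{i=1}^{n}[x_i \mid y_i](e_i);
\]
on the source side, iterating the identity $\langle \xi\otimes x,\eta\otimes y\rangle = \langle x, \langle\xi,\eta\rangle\cdot y\rangle$ and stratifying the path sum according to the last edge $e_n$ produces the identical nested expression by the inductive hypothesis, with the cocycle factors cancelling inside $[\,\cdot\mid\cdot\,]$ because they enter in conjugate pairs. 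Bilinearity and the balanced relation $(x\cdot f)\otimes y = x\otimes(f\cdot y)$ are immediate from the left and right $C_0(E^0)$-actions in Definition~\ref{define C_c(E,N,S)}, so $\Phi_n$ descends to an isometric $C_0(E^0)$-bimodule map on the balanced algebraic tensor product and extends by continuity to all of $X(E,\mathbf{N},\mathbf{S})^{\otimes n}$.

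For surjectivity and the linear span identity~\eqref{compute C_c(E_n,N^n,S^n)}, I would apply Proposition~\ref{induced tuple generates C_c tuple} to $(E_n,\mathbf{N}^n,\mathbf{S}^n)$ to reduce to writing a typical induced element $f^{\Ind_{(\alpha_1,\dots,\alpha_n)}^{(\beta_1,\dots,\beta_n)}}$, with $f \in C_c((N_{\alpha_1}\times\cdots\times N_{\alpha_n})\cap E^n)$, as a $\diamond$-product of induced generators of $C_c(E,\mathbf{N},\mathbf{S})$. The key geometric observation is that since each $N_{\alpha_i}$ is a precompact open $s$-section, the path constraints $s(e_i)=r(e_{i+1})$ iteratively determine $e_{n-1},\dots,e_1$ as continuous functions of $e_n$, so $(N_{\alpha_1}\times\cdots\times N_{\alpha_n})\cap E^n$ is homeomorphic to the open subset of $N_{\alpha_n}$ on which these inverse maps are defined; under this homeomorphism $f$ becomes some $\tilde f \in C_c(N_{\alpha_n})$. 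Picking bump functions $h_i \in C_c(N_{\alpha_i})$ for $i<n$ that are identically $1$ on the compact preimages built iteratively from $\supp(\tilde f)$ via $(s|_{N_{\alpha_i}})^{-1}\circ r|_{N_{\alpha_{i+1}}}$, I would verify directly that the induced element of $f$ coincides coordinatewise with $h_1^{\Ind} \diamond \dots \diamond h_{n-1}^{\Ind} \diamond \tilde f^{\Ind}$, yielding~\eqref{compute C_c(E_n,N^n,S^n)}. Equation~\eqref{compute X(E_n,N^n,S^n)} and the surjectivity of $\Phi_n$ then follow by density.

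The main obstacle is~\eqref{compute C_c(E_n,N^n,S^n)}: at first glance a generic element of $C_c(E_n,\mathbf{N}^n,\mathbf{S}^n)$ need not be a finite linear combination of product functions, since no such decomposition is available for continuous compactly supported functions on a generic open subset of a product space. The whole construction therefore hinges on the $s$-section rigidity of the cover $\mathbf{N}$, which collapses each fibered product $(N_{\alpha_1}\times\cdots\times N_{\alpha_n})\cap E^n$ onto a single-coordinate slice and makes any such $f$ a single $\diamond$-product of induced generators by extending along the other coordinates via bump functions.
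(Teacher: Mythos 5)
Your proposal is correct and follows essentially the same route as the paper: both arguments reduce, via Proposition~\ref{induced tuple generates C_c tuple} applied to $(E_n,\mathbf{N}^n,\mathbf{S}^n)$, to factoring an induced generator $f^{\Ind}$ on $(N_{\alpha_1}\times\cdots\times N_{\alpha_n})\cap E^n$ as a $\diamond$-product of a content function on one coordinate and bump functions on the others, with the $s$-section property of the cover supplying exactly the rigidity you identify. The only organizational difference is that the paper runs an induction on $n$, peeling off the first coordinate at each step (the content function migrating to the tail), whereas you collapse the whole fibered product onto the last coordinate in one stroke; the resulting decomposition is the same.
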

\begin{proof}
We prove this theorem by the induction argument on $n \geq 1$. When $n=1$, the result is obvious. Suppose that the theorem holds for $n \geq 1$, we prove that the theorem is true for $n+1$. We aim to define a map $\varphi:C_c(E,\mathbf{N},\mathbf{S}) \odot_{C_0(E^0)} C_c(E_n,\mathbf{N}^n,\mathbf{S}^n) \to X(E_{n+1},\mathbf{N}^{n+1},\mathbf{S}^{n+1})$ by $\varphi(x \odot y):=x \diamond y$ for all $x \in C_c(E,\mathbf{N},\mathbf{S}), y \in C_c(E_n,\mathbf{N}^n,\mathbf{S}^n)$. Straightforward calculation shows that $\varphi$ is well-defined and preserves the right inner products. So we can extend $\varphi$ uniquely to $X(E,\mathbf{N},\mathbf{S}) \otimes_{C_0(E^0)} X(E,\mathbf{N},\mathbf{S})^{\otimes n}$ and the unique extension $\varphi$ preserves the right inner products. Fix $f \in C_c((N_{\alpha_0} \times N_{\alpha_1} \times\dots\times N_{\alpha_n}) \cap E^{n+1})$. Let $U:=(N_{\alpha_0} \times N_{\alpha_1} \times\dots\times N_{\alpha_n}) \cap E^{n+1}$, and let $V:=(N_{\alpha_1} \times\dots\times N_{\alpha_n}) \cap E^n$. Denote the two projections by $P:U \to N_{\alpha_0}$ and $Q:U \to V$. The Urysohn's Lemma gives $g \in C_0(N_{\alpha_0})$ such that $g(P(\supp(f)))=1$. Since $\mathbf{N}$ consists of $s$-sections, there exists $h \in C_0(V)$ such that $h(e_1,\dots,e_n)=f(s \vert_{N_{\alpha_0}}^{-1}(r^n(e_1,\dots,e_n)),e_1,\dots,e_n)$ for all $e \in (r^{n})^{-1}(s(N_{\alpha_0})) \cap V$. So
\[
\varphi((g^{\mathrm{Ind}_{\alpha_0}^{\beta_0}})\otimes (h^{\mathrm{Ind}_{\alpha_1,\dots,\alpha_n}^{\beta_1,\dots,\beta_n}}))=(g^{\mathrm{Ind}_{\alpha_0}^{\beta_0}}) \diamond (h^{\mathrm{Ind}_{\alpha_1,\dots,\alpha_n}^{\beta_1,\dots,\beta_n}})=(f^{\mathrm{Ind}_{\alpha_0,\alpha_1,\dots,\alpha_n}^{\beta_0,\beta_1,\dots,\beta_n}}).
\]
Proposition~\ref{induced tuple generates C_c tuple} and the induction assumption imply the case for $n+1$.
\end{proof}

Let $(\psi,\pi)$ be a Toeplitz representation of $X(E,\mathbf{N},\mathbf{S})$ in a $C^*$-algebra $B$. The following results are immediate consequences of Proposition~\ref{X(E,N,S)^{otimes n} isomorphic with X(E_n,N^n,S^n)}. Define $\psi_0:=\pi$, and define $\psi_0^{(1)}:=\pi$. For each $n \geq 1$, the pair $(\psi_n,\pi)$ is a Toeplitz representation of $X(E_n,\mathbf{N}^n,\mathbf{S}^n)$ such that $\psi_n(x_1 \diamond\dots\diamond x_n):=\psi(x_1)\dots\psi(x_n)$ for all $x_1, \dots, x_n \in C_c(E,\mathbf{N},\mathbf{S})$. Let $\psi_n^{(1)}:\mathcal{K}(X(E_n,\mathbf{N}^n,\mathbf{S}^n)) \to B$ be the homomorphism such that $\psi_n^{(1)}(\Theta_{\xi,\eta}):=\psi_n(\xi)\psi_n(\eta)^*$ for all $\xi, \eta \in X(E_n,\mathbf{N}^n,\mathbf{S}^n)$. Then $\psi_n^{(1)}$ is injective whenever $\pi$ is injective. For each $n \geq 0$, define $B_n$ to be the image of $\psi_n^{(1)}$, define $B_{[0,n]}:=B_0+\dots+B_n$. Define $B_{[0,\infty]}:=\overline{\bigcup_{n=0}^{\infty}B_{[0,n]}}$, which is called the \emph{core} of $C^*(\psi,\pi)$ (The $C^*$-subalgebras $B_n, B_{[0,n]}, B_{[0,\infty]}$ coincide with Katsura's definitions in \cite{Katsura:JFA04}). We have
\begin{align*}
C^*(\psi,\pi)&=\overline{\lsp}\{\psi_n(\xi)\psi_m(\eta)^*: \xi \in C_c(E_n,\mathbf{N}^n,\mathbf{S}^n), \eta \in C_c(E_m,\mathbf{N}^m,\mathbf{S}^m)\}
\\&=\overline{\lsp}\{\psi_n(\xi)\psi_m(\eta)^*:
\\& \text{ if } n \geq 1, \text{ then } \xi=x_1 \diamond\dots\diamond x_n, \text{ where } x_1,\dots,x_n \in C_c(E,\mathbf{N},\mathbf{S});
\\& \text{ if } m \geq 1, \text{ then } \eta=y_1 \diamond\dots\diamond y_m, \text{ where } y_1,\dots,y_m \in C_c(E,\mathbf{N},\mathbf{S});
\\& \text{ if } n=0, \text{ then } \xi \in C_c(E^0); \text{ and if } m=0, \text{ then } \eta \in C_c(E^0)
\}.
\end{align*}

Secondly, we prove a version of the Tietze extension theorem for the twisted graph correspondence $X(E,\mathbf{N},\mathbf{S})$. Then we use this result to construct a very useful homomorphism in Proposition~\ref{omega is a homomorphism}.

Let $F^0$ be a closed set of $E^0$, and let $F^1:=s^{-1}(F^0)$. The restriction $s\vert_{F^1}:F^1 \to F^0$ is a local homeomorphism. Define a precompact open cover $\mathbf{N}^{F^1}:=\{N_\alpha \cap F^1 \}_{\alpha \in \Lambda}$ of $F^1$, and define a $1$-cocycle $\mathbf{S}^{F^1}:=\{s_{\alpha\beta}^{F^1}:=s_{\alpha\beta} \vert_{\overline{N_{\alpha\beta}\cap F^1}}\}_{\alpha,\beta \in \Lambda}$ relative to $\mathbf{N}^{F^1}$. Let
\begin{align*}
C_c(F^1,\mathbf{N}^{F^1},\mathbf{S}^{F^1}):=\Big\{x \in \prod_{\alpha \in \Lambda} C(\overline{N_\alpha \cap F^1}): x_\alpha=s_{\alpha\beta}^{F^1}x_\beta \ \mathrm{on} \ \overline{N_{\alpha\beta} \cap F^1},
[x \vert x] \in C_c(F^1) \Big\}.
\end{align*}
Conditions~(\ref{define x cdot f}), (\ref{define langle x,y rangle_C_0(E^0)}) of Definition~\ref{define C_c(E,N,S)} are the right action and the right $C_0(F^0)$-valued inner product on $C_c(F^1,\mathbf{N}^{F^1},\mathbf{S}^{F^1})$ by Theorem~\ref{C_0(E^0)-valued inner product on C_c(E,N,S)}. Denote its completion under the $\Vert\cdot\Vert_{C_0(F^0)}$-norm by $X(F^1,\mathbf{N}^{F^1},\mathbf{S}^{F^1})$. 

\begin{prop}\label{Katsura's Ext Thm}
Fix a closed subset $F^0 \subset E^0$ and let $F^1=s^{-1}(F^0)$. For any $x \in C_c(F^1,\mathbf{N}^{F^1},\mathbf{S}^{F^1})$, there exists $y \in C_c(E,\mathbf{N},\mathbf{S})$, such that $y_\alpha \vert_{\overline{N_\alpha \cap F^1}}=x_\alpha$ for all $\alpha \in \Lambda$, and $\Vert y \Vert_{C_0(E^0)}=\Vert x \Vert_{C_0(F^0)}$.
\end{prop}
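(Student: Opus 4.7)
The plan is to first produce any extension $\tilde y\in C_c(E,\mathbf{N},\mathbf{S})$ of $x$, ignoring the norm constraint, and then rescale $\tilde y$ via the right module action of a suitable element of $C_c(E^0)$ to shave its norm down to exactly $\Vert x\Vert_{C_0(F^0)}$ without disturbing its restriction to $F^1$.

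\emph{Producing $\tilde y$.} The family $\{N_\alpha\cap F^1\}_{\alpha\in\Lambda}$ is a cover of $F^1$ by precompact open $s|_{F^1}$-sections, so Proposition~\ref{induced tuple generates C_c tuple} applied inside $F^1$ decomposes $x$ as a finite sum $x=\sum_{i=1}^{n}(f_i^{\Ind_{\alpha_i}^{\alpha}})_{\alpha\in\Lambda}$ with $f_i\in C_c(N_{\alpha_i}\cap F^1)$. Since $\overline{N_{\alpha_i}}$ is compact Hausdorff and $\overline{N_{\alpha_i}\cap F^1}$ is a closed subset on which $f_i$ extends by zero to a continuous function, Tietze's theorem together with a Urysohn cutoff supported in $N_{\alpha_i}$ yields $\tilde f_i\in C_c(N_{\alpha_i})$ with $\tilde f_i|_{\overline{N_{\alpha_i}\cap F^1}}=f_i$. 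Setting $\tilde y:=\sum_{i=1}^{n}(\tilde f_i^{\Ind_{\alpha_i}^{\alpha}})_{\alpha\in\Lambda}\in C_c(E,\mathbf{N},\mathbf{S})$ and using the cocycle identity, one gets $\tilde y_\alpha|_{\overline{N_\alpha\cap F^1}}=x_\alpha$ for every $\alpha\in\Lambda$.

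\emph{Trimming the norm.} Put $M:=\Vert x\Vert_{C_0(F^0)}$ and $G:=\langle\tilde y,\tilde y\rangle_{C_0(E^0)}\in C_c(E^0)^{+}$. Because $s^{-1}(F^0)=F^1$, at any $v\in F^0$ the inner-product sum only ranges over edges in $F^1$, so $G|_{F^0}=\langle x,x\rangle_{C_0(F^0)}$ and $\sup_{v\in F^0}G(v)=M^2$. Define $f:E^0\to[0,1]$ by $f(v):=\min(1,M/\sqrt{G(v)})$ when $G(v)>0$ and $f(v):=1$ when $G(v)=0$; a short check (the two formulas agree at the level set $\{G=M^2\}$, and $f\equiv 1$ on the open set $\{G<M^2\}\supset\{G=0\}$) shows $f\in C(E^0)$. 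Pick $\chi\in C_c(E^0)$ with $\chi\equiv 1$ on the compact set $\supp(G)$ via Urysohn's lemma, and set $y:=\tilde y\cdot(f\chi)\in C_c(E,\mathbf{N},\mathbf{S})$.

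\emph{Verifying the properties.} If $e\in F^1\cap\overline{N_\alpha}$ with $x_\alpha(e)\ne 0$, then $G(s(e))\ge|x_\alpha(e)|^2>0$ while $G(s(e))=\langle x,x\rangle_{C_0(F^0)}(s(e))\le M^2$, forcing $f(s(e))\chi(s(e))=1$ and hence $y_\alpha(e)=\tilde y_\alpha(e)=x_\alpha(e)$; the case $x_\alpha(e)=0$ is automatic, so $y_\alpha|_{\overline{N_\alpha\cap F^1}}=x_\alpha$. On the other hand $\langle y,y\rangle_{C_0(E^0)}=|f\chi|^2 G$ equals $\min(G,M^2)\le M^2$ on $\supp(G)$ and vanishes off $\supp(G)$, so $\Vert y\Vert_{C_0(E^0)}^2=M^2$ (the value is attained at any $v\in F^0$ where $\langle x,x\rangle_{C_0(F^0)}$ reaches its maximum, and such $v$ exists since $\langle x,x\rangle_{C_0(F^0)}\in C_c(F^0)$). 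The main obstacle is this norm equality rather than a mere upper bound: achieving it demands the cap function $\min(1,M/\sqrt{G})$, whose continuity at $\{G=0\}$ and at the level set $\{G=M^2\}$ is the one delicate point.
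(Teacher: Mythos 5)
Your proposal is correct and takes essentially the same route as the paper: the extension step is the same combination of Proposition~\ref{induced tuple generates C_c tuple} with Tietze and Urysohn applied on each $N_{\alpha_i}$, and your cap function $\min\bigl(1,M/\sqrt{G}\bigr)$ is literally the paper's scaling factor $M/\bigl(\max\{M^{2},G\}\bigr)^{1/2}$ written in a form whose continuity needs the extra check you supply. The only (trivial) omission is the case $x=0$, where your formula for $f$ degenerates and one should simply take $y=0$, as the paper does.
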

\begin{proof}
Fix $\alpha_0 \in \Lambda$, and fix $f \in C(F^1)$ with $\supp(f) \subset N_{\alpha_0} \cap F^1$. By the Tietze extension theorem (see \cite[Lemma~1.42]{Williams:Crossedproductsof07}), there exists $g \in C(E^1)$ such that $f=g$ on $\overline{N_{\alpha_0} \cap F^1}$. By the Urysohn's lemma, there exists $h \in C_0(N_{\alpha_0})$ such that $h(\supp(f))=1$. Then $gh=f$ on $\overline{N_{\alpha_0} \cap F^1}$ and $gh \in C_0(N_{\alpha_0})$. So $(gh)^{\Ind_{\alpha_0}^{\alpha}} \vert_{\overline{N_{\alpha_0}\cap F^1}}=f^{\Ind_{\alpha_0}^{\alpha}}$ for all $\alpha \in \Lambda$. Hence Proposition~\ref{induced tuple generates C_c tuple} implies that for any $x \in C_c(F^1,\mathbf{N}^{F^1},\mathbf{S}^{F^1})$, there exists $y \in C_c(E,\mathbf{N},\mathbf{S})$, such that $y_\alpha \vert_{\overline{N_\alpha \cap F^1}}=x_\alpha$, for all $\alpha \in \Lambda$.

Thus we can extend each element in $C_c(F^1,\mathbf{N}^{F^1},\mathbf{S}^{F^1})$ to $C_c(E,\mathbf{N},\mathbf{S})$. Next we need to show the existence of the extension preserving the norms. Katsura's proof of \cite[Lemma~1.11]{Katsura:TAMS04} fits in our case.

Fix $x \in C_c(F^1,\mathbf{N}^{F^1},\mathbf{S}^{F^1})$. If $x=0$ then $y=0$ does the job, so we suppose that $x \neq 0$. Take $y \in C_c(E,\mathbf{N},\mathbf{S})$, such that $y_\alpha \vert_{\overline{N_\alpha \cap F^1}}=x_\alpha$, for all $\alpha \in \Lambda$. For each $\alpha \in \Lambda$, define $z_\alpha:\overline{N_\alpha} \to \mathbb{C}$ by 
\[
z_\alpha(e):=y_\alpha(e)\Vert x \Vert_{C_0(F^0)}/(\max\{\Vert x \Vert_{C_0(F^0)}^2, \langle y, y \rangle_{C_0(E^0)} (s(e))\})^{1/2}. 
\]
Then $z:=(z_\alpha)_{\alpha \in \Lambda}$ does the job.
\end{proof}

Let $F^0$ be a closed subset of $E^0$ and let $F^1=s^{-1}(F^0)$. Let $T \in \mathcal{L}(X(E,\mathbf{N},\mathbf{S}))$, and let $x \in C_c(F^1,\mathbf{N}^{F^1},\mathbf{S}^{F^1})$. By Proposition~\ref{Katsura's Ext Thm}, take $y, z \in C_c(E,\mathbf{N},\mathbf{S})$ such that $y_\alpha \vert_{\overline{N_\alpha \cap F^1}}=z_\alpha \vert_{\overline{N_\alpha \cap F^1}}=x_\alpha$, and take $(y_n),(z_n) \subset C_c(E,\mathbf{N},\mathbf{S})$ such that $y_n \to Ty, z_n \to Tz$. Then $y-z \in X(E,\mathbf{N},\mathbf{S})_{C_0(E^0 \setminus F^0)}$, which implies that $T(y-z) \in X(E,\mathbf{N},\mathbf{S})_{C_0(E^0 \setminus F^0)}$. So 
\[
\langle y_n-z_n,y_n-z_n \rangle_{C_0(E^0)} \to \langle T(y-z),T(y-z) \rangle_{C_0(E^0)} \in C_0(E^0 \setminus F^0).
\]
The sequences $(y_{n,\alpha}\vert_{\overline{N_\alpha \cap F^1}})_{\alpha \in \Lambda}, (z_{n,\alpha}\vert_{\overline{N_\alpha \cap F^1}})_{\alpha \in \Lambda}$ converge in $X(F^1,\mathbf{N}^{F^1},\mathbf{S}^{F^1})$. Since 
\[
\langle ((y_{n,\alpha}-z_{n,\alpha}) \vert_{\overline{N_\alpha \cap F^1}})_{\alpha \in \Lambda},((y_{n,\alpha}-z_{n,\alpha}) \vert_{\overline{N_\alpha \cap F^1}})_{\alpha \in \Lambda} \rangle_{C_0(F^0)}(v) \to 0 \text{ for all } v \in F^0,
\]
we have $((y_{n,\alpha}-z_{n,\alpha}) \vert_{\overline{N_\alpha \cap F^1}})_{\alpha \in \Lambda} \to 0$. Define $\omega(T):C_c(F^1,\mathbf{N}^{F^1},\mathbf{S}^{F^1}) \to X(F^1,\mathbf{N}^{F^1},\mathbf{S}^{F^1})$ by $\omega(T)(x):=\lim_{n \to \infty}(y_{n,\alpha}\vert_{\overline{N_\alpha \cap F^1}})_{\alpha \in \Lambda}$. It is straightforward to show that $\omega(T)$ is bounded and linear, and the unique extension $\omega(T)$ is adjointable with $\omega(T)^*=\omega(T^*)$.

The following proposition provides a generalization, and a detailed proof of an assertion made on \cite[Page~4294]{Katsura:TAMS04}.

\begin{prop}\label{omega is a homomorphism}
Fix a closed subset $F^0 \subset E^0$ and let $F^1=s^{-1}(F^0)$. The map $\omega: \mathcal{L}(X(E,\mathbf{N},\mathbf{S})) \to \mathcal{L}(X(F^1,\mathbf{N}^{F^1},\mathbf{S}^{F^1}))$ is a homomorphism, and
\begin{equation}\label{compute of the kernel of omega}
\ker(\omega)=\{T : Tx \in X(E,\mathbf{N},\mathbf{S})_{C_0(E^0 \setminus F^0)} \text{ for all } x \in X(E,\mathbf{N},\mathbf{S})\}.
\end{equation}
\end{prop}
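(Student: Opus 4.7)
My approach is to reformulate $\omega$ as the operator induced by $T$ on a quotient module, via a natural restriction map; once this is set up, every assertion in the proposition becomes a short formal consequence. Define $\rho : X(E,\mathbf{N},\mathbf{S}) \to X(F^1,\mathbf{N}^{F^1},\mathbf{S}^{F^1})$ on $C_c(E,\mathbf{N},\mathbf{S})$ by $\rho(y) := (y_\alpha|_{\overline{N_\alpha \cap F^1}})_{\alpha \in \Lambda}$. Because $F^1 = s^{-1}(F^0)$, the inner-product sum defining $\langle \rho(y),\rho(y)\rangle_{C_0(F^0)}(v)$ at $v \in F^0$ ranges over exactly the same edges as the one for $\langle y,y\rangle_{C_0(E^0)}(v)$, so $\langle \rho(y),\rho(y)\rangle_{C_0(F^0)} = \langle y,y\rangle_{C_0(E^0)}|_{F^0}$. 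Hence $\rho$ is norm-decreasing and extends uniquely to a bounded, right $C_0(F^0)$-linear map on all of $X(E,\mathbf{N},\mathbf{S})$. Proposition~\ref{Katsura's Ext Thm} shows that $\rho$ surjects onto $C_c(F^1,\mathbf{N}^{F^1},\mathbf{S}^{F^1})$, hence onto $X(F^1,\mathbf{N}^{F^1},\mathbf{S}^{F^1})$ by density.

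The crucial identification I would establish next is $\ker(\rho) = X(E,\mathbf{N},\mathbf{S})_{C_0(E^0\setminus F^0)}$. The inclusion $\supseteq$ follows from $(y \cdot f)_\alpha = y_\alpha(f \circ s|_{\overline{N_\alpha}})$, which vanishes on $F^1$ whenever $f \in C_0(E^0\setminus F^0)$. For the reverse, I would invoke the standard fact that for a right Hilbert $A$-module $X$ and a closed ideal $J \subset A$, the submodule $X_J$ coincides with $\{x \in X : \langle x,x\rangle \in J\}$; an element of $\ker(\rho)$ has $\langle y,y\rangle \in C_0(E^0\setminus F^0)$ and so lies in $X(E,\mathbf{N},\mathbf{S})_{C_0(E^0\setminus F^0)}$.

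With $\rho$ in hand, I would observe that every $T \in \mathcal{L}(X(E,\mathbf{N},\mathbf{S}))$ is right $C_0(E^0)$-linear and therefore preserves $\ker(\rho)$, since $T(y \cdot f) = (Ty) \cdot f$ for $f \in C_0(E^0\setminus F^0)$ and $\ker(\rho)$ is the closure of the span of such elements. Comparing with the definition in the paragraph preceding the proposition, continuity of $\rho$ gives $\omega(T)(\rho(y)) = \rho(Ty)$ for every $y \in C_c(E,\mathbf{N},\mathbf{S})$; by surjectivity of $\rho$ and boundedness, $\omega(T)$ is then the unique bounded operator on $X(F^1,\mathbf{N}^{F^1},\mathbf{S}^{F^1})$ satisfying $\omega(T) \circ \rho = \rho \circ T$.

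From this characterization the remaining conclusions are formal. Linearity and multiplicativity of $\omega$ follow by short diagram chases: $\omega(S+T)\rho = \rho(S+T) = (\omega(S)+\omega(T))\rho$ and $\omega(ST)\rho = \rho S T = \omega(S)\rho T = \omega(S)\omega(T)\rho$, with surjectivity of $\rho$ cancelling on the right (the $*$-compatibility $\omega(T^*) = \omega(T)^*$ was already recorded in the paragraph preceding the proposition). For the kernel, $T \in \ker(\omega)$ iff $\omega(T)\rho(y) = 0$ for every $y$, iff $\rho(Ty) = 0$ for every $y$, iff $Ty \in \ker(\rho) = X(E,\mathbf{N},\mathbf{S})_{C_0(E^0\setminus F^0)}$ for every $y$, which is exactly the claimed description. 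The one place requiring genuine work is the identification of $\ker(\rho)$ on the \emph{completion} rather than merely on $C_c$; everything else is either the norm computation at the start or a formal consequence of $\omega(T)\rho = \rho T$.
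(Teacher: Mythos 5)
Your proof is correct, and it reaches the same conclusions as the paper's by a genuinely cleaner organization of the same ingredients. The paper works throughout with explicit approximating sequences: $\omega(T)(x)$ is defined as $\lim_n(y_{n,\alpha}\vert_{\overline{N_\alpha\cap F^1}})_{\alpha\in\Lambda}$ for $y_n\to Ty$, multiplicativity is proved by interpolating a sequence $(z_n)$ with $\Vert z_n-Ty_n\Vert<1/n$, and the kernel is computed by evaluating inner products pointwise on $F^0$. You instead isolate the restriction map $\rho$ as a bounded right-module surjection with $\ker(\rho)=X(E,\mathbf{N},\mathbf{S})_{C_0(E^0\setminus F^0)}$, observe that every adjointable $T$ preserves this submodule, and characterize $\omega(T)$ by the intertwining relation $\omega(T)\circ\rho=\rho\circ T$; after that, linearity, multiplicativity and the kernel formula really are one-line cancellations. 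The two facts you lean on --- Proposition~\ref{Katsura's Ext Thm} for surjectivity of $\rho$ onto $C_c(F^1,\mathbf{N}^{F^1},\mathbf{S}^{F^1})$, and the identity $X_J=\{x\in X:\langle x,x\rangle_A\in J\}$ --- are exactly the facts the paper also uses (the second only implicitly, in the step ``hence $\langle Tx,Tx\rangle_{C_0(E^0)}\in C_0(E^0\setminus F^0)$ and $Tx\in X(E,\mathbf{N},\mathbf{S})_{C_0(E^0\setminus F^0)}$''), so nothing new is assumed; what your packaging buys is that the $\epsilon$--$n$ bookkeeping disappears. One small imprecision: a bounded operator with dense range need not be surjective, so ``onto $C_c(F^1,\mathbf{N}^{F^1},\mathbf{S}^{F^1})$, hence onto the completion by density'' is not literally a valid inference. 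It is harmless here for two reasons: the norm-preserving extension in Proposition~\ref{Katsura's Ext Thm} makes $\rho$ a metric surjection (isometric on the quotient by its kernel), so it genuinely is onto; and every place you actually use surjectivity --- cancelling $\rho$ on the right of an identity between bounded operators, or testing whether $\omega(T)=0$ --- only needs dense range plus continuity.
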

\begin{proof}
We show that $\omega$ is a homomorphism. The linearity of $\omega$ is straightforward, and $\omega$ preserves adjoints since we just showed that $\omega(T)^*=\omega(T^*)$. Let us prove that $\omega$ preserves multiplication.

Fix $T, S \in \mathcal{L}(X(E,\mathbf{N},\mathbf{S}))$ and $x \in C_c(F^1,\mathbf{N}^{F^1},\mathbf{S}^{F^1})$. Take $y \in C_c(E,\mathbf{N},\mathbf{S})$ with $y_\alpha \vert_{\overline{N_\alpha \cap F^1}}=x_\alpha$ for all $\alpha \in \Lambda$, and choose $(y_n) \subset C_c(E,\mathbf{N},\mathbf{S})$ with $y_n \to Sy$. Then $Ty_n \to TSy, (y_{n,\alpha} \vert_{\overline{N_\alpha \cap F^1}})_{\alpha \in \Lambda} \to \omega(S)(x)$, and $\omega(T)(y_{n,\alpha} \vert_{\overline{N_\alpha \cap F^1}})_{\alpha \in \Lambda} \to \omega(T)\omega(S)(x)$. For each $n \geq 1$, there exists $z_n \in C_c(E,\mathbf{N},\mathbf{S})$, such that $\Vert z_n-Ty_n \Vert_{C_0(E^0)} < 1/n$. So $z_n \to TSy$, and $(z_{n,\alpha} \vert_{\overline{N_\alpha \cap F^1}}) \to \omega(T)\omega(S)(x)$. Hence $(z_{n,\alpha} \vert_{\overline{N_\alpha \cap F^1}}) \to \omega(TS)(x)$, and $\omega(TS)(x)=\omega(T)\omega(S)(x)$.

Now we compute the kernel of $\omega$. Fix $T \in \ker(\omega)$ and $x \in C_c(E,\mathbf{N},\mathbf{S})$. Take $(x_n) \subset C_c(E,\mathbf{N},\mathbf{S})$ such that $x_n \to Tx$. So
\[
\omega(T)((x_\alpha \vert_{\overline{N_\alpha \cap F^1}})_{\alpha \in \Lambda})=\lim_{n \to \infty}(x_{n,\alpha} \vert_{\overline{N_\alpha \cap F^1}})_{\alpha \in \Lambda}=0.
\]
For any $v \in F^0$, we have 
\[
\langle x_n,x_n \rangle_{C_0(E^0)}(v) =\langle (x_{n,\alpha} \vert_{\overline{N_\alpha \cap F^1}}),(x_{n,\alpha} \vert_{\overline{N_\alpha \cap F^1}}) \rangle_{C_0(F^0)}(v) \to 0.
\]
Hence $\langle Tx,Tx \rangle_{C_0(E^0)} \in C_0(E^0 \setminus F^0)$ and $Tx \in X(E,\mathbf{N},\mathbf{S})_{C_0(E^0 \setminus F^0)}$. By continuity of $T, Tx \in X(E,\mathbf{N},\mathbf{S})_{C_0(E^0 \setminus F^0)}$ for all $x \in X(E,\mathbf{N},\mathbf{S})$. Conversely, fix $T$ such that $Tx \in X(E,\mathbf{N},\mathbf{S})_{C_0(E^0 \setminus F^0)}$ for all $x \in X(E,\mathbf{N},\mathbf{S})$, and fix $x \in C_c(F^1,\mathbf{N}^{F^1},\mathbf{S}^{F^1})$. Take $y \in C_c(E,\mathbf{N},\mathbf{S})$ with $y_\alpha \vert_{\overline{N_\alpha \cap F^1}}=x_\alpha$ for all $\alpha \in \Lambda$, and choose $(y_n) \subset C_c(E,\mathbf{N},\mathbf{S})$ with $y_n \to Ty$. Then 
\begin{align*}
\langle \omega(T)(x),\omega(T)(x) \rangle_{C_0(F^0)}=\lim_{n \to \infty}\langle y_n,y_n \rangle_{C_0(E^0)} \vert_{F^0}=\langle Ty,Ty \rangle_{C_0(E^0)} \vert_{F^0}=0.
\end{align*}
So $T \in \ker(\omega)$.
\end{proof}

The following proposition is a generalization of \cite[Lemma~1.14]{Katsura:TAMS04}.

\begin{prop}\label{omega maps K(X(E,N,S)) onto K(X(X^1,N^X^1,S^X^1))}
Fix a closed subset $F^0 \subset E^0$ and let $F^1=s^{-1}(F^0)$. Then
\begin{enumerate}
\item\label{Theta_{x,y}=Theta_{x_{X^1},y_{X^1}}} $\omega(\Theta_{x,y})=\Theta_{(x_\alpha \vert_{\overline{N_\alpha \cap F^1}}),(y_\alpha \vert_{\overline{N_\alpha \cap F^1}})}$, for all $x, y \in C_c(E,\mathbf{N},\mathbf{S})$;
\item\label{image of omega on K(X(E,N,S))} $\omega(\mathcal{K}(X(E,\mathbf{N},\mathbf{S})))=\mathcal{K}(X(F^1,\mathbf{N}^{F^1},\mathbf{S}^{F^1}))$; and
\item\label{ker of omega on K(X(E,N,S))} $\ker(\omega) \cap \mathcal{K}(X(E,\mathbf{N},\mathbf{S}))=\overline{\lsp}\{\Theta_{x,y}: x, y \in X(E,\mathbf{N},\mathbf{S})_{C_0(E^0 \setminus F^0)} \}$.
\end{enumerate}
\end{prop}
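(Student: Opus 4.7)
For part (1), I would fix $x,y\in C_c(E,\mathbf{N},\mathbf{S})$ and $z\in C_c(F^1,\mathbf{N}^{F^1},\mathbf{S}^{F^1})$, extend $z$ to some $\tilde z\in C_c(E,\mathbf{N},\mathbf{S})$ via Proposition~\ref{Katsura's Ext Thm}, and note that $\Theta_{x,y}(\tilde z)=x\cdot\langle y,\tilde z\rangle_{C_0(E^0)}$ already lies in $C_c(E,\mathbf{N},\mathbf{S})$. Applying the definition of $\omega$ with the constant approximating sequence, the key point is that for $v\in F^0$ the sum $\langle y,\tilde z\rangle_{C_0(E^0)}(v)=\sum_{s(e)=v}[y\vert\tilde z](e)$ runs only over $e\in s^{-1}(F^0)=F^1$, where $\tilde z$ agrees with $z$; hence $\langle y,\tilde z\rangle_{C_0(E^0)}$ restricts to $\langle y\vert_{F^1},z\rangle_{C_0(F^0)}$ on $F^0$, giving directly $\omega(\Theta_{x,y})(z)=\Theta_{(x_\alpha\vert_{\overline{N_\alpha\cap F^1}}),(y_\alpha\vert_{\overline{N_\alpha\cap F^1}})}(z)$. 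Part~(2) then follows immediately: by (1) combined with Proposition~\ref{Katsura's Ext Thm}, $\omega$ sends the dense span of rank-one operators in $\mathcal{K}(X(E,\mathbf{N},\mathbf{S}))$ onto the dense span of rank-one operators in $\mathcal{K}(X(F^1,\mathbf{N}^{F^1},\mathbf{S}^{F^1}))$; since $\omega$ restricted to $\mathcal{K}(X(E,\mathbf{N},\mathbf{S}))$ is a $*$-homomorphism between $C^*$-algebras its image is closed, hence equal to the target.

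For part (3), let $X:=X(E,\mathbf{N},\mathbf{S})$, $J:=C_0(E^0\setminus F^0)$, and $I:=\overline{\lsp}\{\Theta_{x,y}:x,y\in X_J\}$. First I verify that $I$ is a closed two-sided ideal of $\mathcal{K}(X)$: for $\Theta_{p,q}\in\mathcal{K}(X)$ and $x,y\in X_J$, $\Theta_{p,q}\Theta_{x,y}=\Theta_{p\cdot\langle q,x\rangle,y}$, and $\langle q,x\rangle\in\langle X,X_J\rangle\subseteq J$ forces $p\cdot\langle q,x\rangle\in X_J$. For the inclusion $\supseteq$ in (3), if $x,y\in X_J$ then $\Theta_{x,y}(z)=x\cdot\langle y,z\rangle\in X_J$ since $X_J$ is a closed right $C_0(E^0)$-submodule; Proposition~\ref{omega is a homomorphism} then places $\Theta_{x,y}$ in $\ker(\omega)$.

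For the reverse inclusion, take $T\in\ker(\omega)\cap\mathcal{K}(X)$, so $T(X)\subseteq X_J$ by Proposition~\ref{omega is a homomorphism}. The crucial observation is that $T^*(X)\subseteq X_J$ as well: for any $x\in X$, $\langle T^*x,T^*x\rangle=\langle x,TT^*x\rangle\in\langle X,X_J\rangle\subseteq J$, and with $(u_\lambda)$ an approximate identity of $J$, the estimate $\|T^*x-T^*x\cdot u_\lambda\|^2=\|(1-u_\lambda)\langle T^*x,T^*x\rangle(1-u_\lambda)\|\to 0$ gives $T^*x\in X_J$. Approximating $T^*$ in norm by finite-rank sums $\sum_i\Theta_{b_i^{(n)},a_i^{(n)}}$ and composing on the left with $T$ yields $TT^*=\lim_n\sum_i\Theta_{Tb_i^{(n)},a_i^{(n)}}$, and each $Tb_i^{(n)}\in X_J$. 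Now for any $\Theta_{x,y}$ with $x\in X_J$, the identity $\Theta_{x\cdot u_\lambda,y}=\Theta_{x,y\cdot u_\lambda}$ together with $x\cdot u_\lambda\to x$ shows $\Theta_{x,y}=\lim_\lambda\Theta_{x,y\cdot u_\lambda}\in I$ (both slots now in $X_J$); hence $TT^*\in I$. Finally, in the quotient $\mathcal{K}(X)/I$, the $C^*$-identity gives $\|[T]\|^2=\|[T][T]^*\|=\|[TT^*]\|=0$, so $T\in I$. The main obstacle is precisely this last step: one cannot directly rewrite $T$ as a limit of $\Theta_{x,y}$ with both slots in $X_J$, so the route through $TT^*$ (crucially using $T^*(X)\subseteq X_J$) followed by the $C^*$-identity in the quotient is essential.
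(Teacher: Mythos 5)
Your proofs of parts (1) and (2) are essentially the paper's: the same restriction computation for $\omega(\Theta_{x,y})$ applied to an extension of $z$ supplied by Proposition~\ref{Katsura's Ext Thm}, and the same two containments for (2) (the paper leaves the closed-range point implicit where you make it explicit). For part (3) the containment $\supseteq$ also matches, but for the harder containment $\ker(\omega)\cap\mathcal{K}(X(E,\mathbf{N},\mathbf{S}))\subseteq I$ (writing, as you do, $X:=X(E,\mathbf{N},\mathbf{S})$, $J:=C_0(E^0\setminus F^0)$ and $I:=\overline{\lsp}\{\Theta_{x,y}:x,y\in X_J\}$) you take a genuinely different route. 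The paper writes $K\in\ker(\omega)\cap\mathcal{K}(X)$ via the Hahn decomposition as $K_1K_1^*-K_2K_2^*+iK_3K_3^*-iK_4K_4^*$ with each $K_i\in\ker(\omega)\cap\mathcal{K}(X)$, inserts a finite-rank approximate unit $E_n=\sum_j\Theta_{x_j,y_j}$ in the middle, and uses $K_i\Theta_{x_j,y_j}K_i^*=\Theta_{K_ix_j,K_iy_j}$ together with $K_i(X)\subseteq X_J$ (Equation~(\ref{compute of the kernel of omega})) to exhibit $K$ directly as a limit of sums of rank-one operators with \emph{both} slots in $X_J$. You instead verify that $I$ is a closed two-sided ideal, show $TT^*\in I$ (via $T\Theta_{b,a}=\Theta_{Tb,a}$ and the observation that $\Theta_{x,y}\in I$ already when only $x\in X_J$, by sliding an approximate identity of $J$ from the first slot to the second), and finish with the $C^*$-identity in the quotient $\mathcal{K}(X)/I$. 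Both arguments are correct; yours costs the extra verification that $I$ is an ideal but follows a standard pattern, while the paper's sandwich trick stays inside $\mathcal{K}(X)$ and shows that, contrary to your closing remark, one \emph{can} rewrite $T$ directly as a limit of $\Theta_{u,v}$ with both slots in $X_J$ -- the detour through $TT^*$ is convenient but not essential. (Your intermediate claim $T^*(X)\subseteq X_J$ is correct, and also follows at once from $T^*\in\ker(\omega)$ since $\omega$ is a $*$-homomorphism, but it is not actually used in your derivation of $TT^*\in I$.)
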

\begin{proof}
We prove Equality~(\ref{Theta_{x,y}=Theta_{x_{X^1},y_{X^1}}}). Fix $x, y \in C_c(E,\mathbf{N},\mathbf{S})$, and fix $z \in C_c(F^1,\mathbf{N}^{F^1},\mathbf{S}^{F^1})$. Take $w \in C_c(E,\mathbf{N},\mathbf{S})$ with $w_\alpha \vert_{\overline{N_\alpha \cap F^1}}=z_\alpha$, for all $\alpha \in \Lambda$. Proposition~\ref{omega is a homomorphism} implies 
\begin{align*}
\omega(\Theta_{x,y})(z)=\big(x_\alpha \vert_{\overline{N_\alpha \cap F^1}} \langle y,w \rangle_{C_0(E^0)} \circ s \vert_{\overline{N_\alpha \cap F^1}} \big)=\Theta_{(x_\alpha \vert_{\overline{N_\alpha \cap F^1}}),(y_\alpha \vert_{\overline{N_\alpha \cap F^1}})}z.
\end{align*}

For Equality~(\ref{image of omega on K(X(E,N,S))}), observe that the containment $\subset$ follows immediately from~(1). For the reverse containment, Fix $x, y \in C_c(F^1,\mathbf{N}^{F^1},\mathbf{S}^{F^1})$. Choose $z, w \in C_c(E,\mathbf{N},\mathbf{S})$ with $z_\alpha \vert_{\overline{N_\alpha \cap F^1}}=x_\alpha$, and $w_\alpha \vert_{\overline{N_\alpha \cap F^1}}=y_\alpha$ for all $\alpha \in \Lambda$. By Equality~(\ref{Theta_{x,y}=Theta_{x_{X^1},y_{X^1}}}), $\omega(\Theta_{z,w})=\Theta_{x,y}$.

Finally we prove Equality~(\ref{ker of omega on K(X(E,N,S))}). Fix $K \in \ker(\omega) \cap \mathcal{K}(X(E,\mathbf{N},\mathbf{S}))$. Since $\ker(\omega) \cap \mathcal{K}(X(E,\mathbf{N},\mathbf{S}))$ is a $C^*$-subalgebra of $\mathcal{K}(X(E,\mathbf{N},\mathbf{S}))$, the Hahn-decomposition gives $K=K_1K_1^*-K_2 K_2^*+iK_3 K_3^*-iK_4 K_4^*$, where each $K_i \in \ker(\omega) \cap \mathcal{K}(X(E,\mathbf{N},\mathbf{S}))$. Take a sequence $(E_n) \subset \lsp\{\Theta_{x,y}:x,y \in X(E,\mathbf{N},\mathbf{S})\}$ such that $E_n K_i, K_i E_n \to K_i$ for each $i$. Then 
\[
K_1 E_n K_1^*-K_2 E_n K_2^*+iK_3 E_n K_3^*-i K_4 E_n K_4^* \to K.
\]
Equation~(\ref{compute of the kernel of omega}) gives $K \in \overline{\lsp}\{\Theta_{x,y}: x, y \in X(E,\mathbf{N},\mathbf{S})_{C_0(E^0 \setminus F^0)} \}$. Conversely, fix $x, y \in X(E,\mathbf{N},\mathbf{S})_{C_0(E^0 \setminus F^0)}$. Then $\langle \Theta_{x,y}z,\Theta_{x,y}z \rangle_{C_0(E^0)} \in C_0(E^0 \setminus F^0)$ for all $z \in X(E,\mathbf{N},\mathbf{S})$. By Equation~(\ref{compute of the kernel of omega}), $\Theta_{x,y} \in \ker(\omega)$. Hence Equality~(\ref{ker of omega on K(X(E,N,S))}) holds.
\end{proof}

Next, for an injective covariant Toeplitz representation $(\psi,\pi)$ of $X(E,\mathbf{N},\mathbf{S})$, define $\pi_{0}^{0}:=\pi^{-1}$. We aim to define homomorphisms of the $C^*$-subalgebras $B_{[0,n]} (n \geq 1)$ of the core of $C^*(\psi,\pi)$. The following proposition provides a generalization, and a detailed proof of an assertion made on \cite[Page~4312]{Katsura:TAMS04}.

\begin{prop}\label{define pi_n^n on X(E,N,S)}
Fix an injective covariant Toeplitz representation $(\psi,\pi)$ of $X(E,\mathbf{N},\mathbf{S})$, and fix $n \geq 1$. Then there is a homomorphism $\pi_{n}^{n}:B_{[0,n]} \to \mathcal{L}(X(E_n,\mathbf{N}^n,\mathbf{S}^n))$ such that $\psi_n(\pi_{n}^{n}(b)\xi)=b \psi_n(\xi)$ for all $b \in B_{[0,n]}, \xi \in X(E_n,\mathbf{N}^n,\mathbf{S}^n)$, and such that $\pi_n^n(\psi_n^{(1)}(K))=K$ for all $K \in \mathcal{K}(X(E_n,\mathbf{N}^n,\mathbf{S}^n))$.
\end{prop}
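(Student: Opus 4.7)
My plan is to define $\pi_n^n$ summand-by-summand on $B_{[0,n]} = B_0 + B_1 + \cdots + B_n$ using the natural map $K \mapsto K \otimes \mathrm{id}$ available on tensor products of Hilbert modules, glue the pieces, and then lean on the isometry of $\psi_n$ (a consequence of injectivity of $\pi$) to handle the well-definedness issue.

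First, I would use Proposition~\ref{X(E,N,S)^{otimes n} isomorphic with X(E_n,N^n,S^n)} and its iteration to factor $X(E_n,\mathbf{N}^n,\mathbf{S}^n) \cong X(E_k,\mathbf{N}^k,\mathbf{S}^k) \otimes_{C_0(E^0)} X(E_{n-k},\mathbf{N}^{n-k},\mathbf{S}^{n-k})$ for each $0 \le k \le n$, and then define
\[
\iota_k^n : \mathcal{K}(X(E_k,\mathbf{N}^k,\mathbf{S}^k)) \to \mathcal{L}(X(E_n,\mathbf{N}^n,\mathbf{S}^n)),\qquad \iota_k^n(K) := K \otimes \mathrm{id},
\]
where for $k=0$ this is interpreted as the left action of $C_0(E^0)$ on $X(E_n,\mathbf{N}^n,\mathbf{S}^n)$. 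A direct calculation on rank-one operators $\Theta_{\xi,\eta}$, using the multiplicativity $\psi_n(x \diamond y) = \psi_k(x)\psi_{n-k}(y)$ and the Toeplitz relation $\psi_k(\eta)^*\psi_k(x) = \pi(\langle\eta,x\rangle_{C_0(E^0)})$, yields the intertwining identity
\[
\psi_n\big(\iota_k^n(K)\zeta\big) = \psi_k^{(1)}(K)\,\psi_n(\zeta) \qquad (\ast)
\]
for all $K \in \mathcal{K}(X(E_k,\mathbf{N}^k,\mathbf{S}^k))$ and $\zeta \in X(E_n,\mathbf{N}^n,\mathbf{S}^n)$, which I would then extend by linearity and continuity.

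The second step is to set $\pi_n^n\big(\sum_{k=0}^n \psi_k^{(1)}(K_k)\big) := \sum_{k=0}^n \iota_k^n(K_k)$ on finite sums. The principal obstacle is well-definedness, because writing an element of $B_{[0,n]}$ as $\sum_k \psi_k^{(1)}(K_k)$ is far from unique: covariance forces $\pi(f) = \psi_1^{(1)}(\phi(f))$ for every $f \in C_0(E^0_{\mathrm{rg}}) = J_{X(E,\mathbf{N},\mathbf{S})}$ by Proposition~\ref{phi^{-1}(K(X)) intersects ker(phi^{perp}) of X_{E,N,S}}. I would sidestep this via $(\ast)$: if $\sum_k \psi_k^{(1)}(K_k) = 0$, then $\psi_n\big(\sum_k \iota_k^n(K_k)\zeta\big) = 0$ for every $\zeta$, and injectivity of $\pi$ together with $\psi_n(\zeta)^*\psi_n(\zeta) = \pi(\langle\zeta,\zeta\rangle_{C_0(E^0)})$ makes $\psi_n$ isometric, so $\sum_k \iota_k^n(K_k) = 0$. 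The same identity yields $\|\pi_n^n(b)\zeta\| = \|b\psi_n(\zeta)\| \le \|b\|\,\|\zeta\|$, so $\pi_n^n$ is contractive and extends continuously to all of $B_{[0,n]}$.

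The three claimed properties then drop out. The equation $\psi_n(\pi_n^n(b)\xi) = b\psi_n(\xi)$ is $(\ast)$ extended by continuity; the retract identity $\pi_n^n(\psi_n^{(1)}(K)) = K$ for $K \in \mathcal{K}(X(E_n,\mathbf{N}^n,\mathbf{S}^n))$ is immediate because $\iota_n^n$ is the identity map; and multiplicativity follows from
\[
\psi_n\big(\pi_n^n(b)\pi_n^n(b')\xi\big) = b\,\psi_n\big(\pi_n^n(b')\xi\big) = bb'\,\psi_n(\xi) = \psi_n\big(\pi_n^n(bb')\xi\big),
\]
combined with injectivity of $\psi_n$ one final time, plus the routine check that adjoints are preserved.
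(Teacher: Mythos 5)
Your proof is correct and is essentially the paper's own argument unrolled: the paper constructs $\pi_n^n$ inductively via $\pi_{n+1}^{n+1}(b+b_{n+1}) := \pi_n^n(b)\otimes\id + (\psi_{n+1}^{(1)})^{-1}(b_{n+1})$, which is exactly your direct sum $\sum_k \iota_k^n(K_k)$ of the maps $K \mapsto K\otimes\id$ over the tensor factorizations of $X(E_n,\mathbf{N}^n,\mathbf{S}^n)$. Your explicit handling of well-definedness through the intertwining identity $(\ast)$ and the isometry of $\psi_n$ supplies precisely the detail the paper compresses into ``straightforward calculation shows that $\pi_{n+1}^{n+1}$ is a homomorphism.''
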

\begin{proof}
We prove the existence of $\pi_{n}^{n}$ by induction on $n \geq 1$. When $n=1, \pi_1^1(\pi(f)+\psi^{(1)}(K)):=\phi(f) +K$ does the job. Suppose that the proposition is true for $n \geq 1$. For $b \in B_{[0,n]}, b_{n+1} \in B_{n+1}$, and for $x_1, \dots, x_{n+1} \in C_c(E,\mathbf{N},\mathbf{S})$, take $(y_m) \subset C_c(E_n,\mathbf{N}^n,\mathbf{S}^n)$ such that $y_m \to \pi_n^n(b)(x_1 \diamond\dots\diamond x_n)$. Define $\pi_n^n(b) \otimes \id \in \mathcal{L}(X(E_{n+1},\mathbf{N}^{n+1},\mathbf{S}^{n+1}))$ by $\pi_n^n(b) \otimes \id (x_1 \diamond\dots\diamond x_{n+1}):=\lim_{m}(y_m \diamond x_{n+1})$. Define $\pi_{n+1}^{n+1}(b+b_{n+1}):=\pi_n^n(b) \otimes \id+(\psi_{n+1}^{(1)})^{-1}(b_{n+1})$. Straightforward calculation shows that $\pi_{n+1}^{n+1}$ is a homomorphism. For $b \in B_{[0,n]}, b_{n+1} \in B_{n+1}, x_1,\dots, x_{n+1} \in C_c(E,\mathbf{N},\mathbf{S})$, we have
\begin{align*}
\psi_{n+1}(\pi_{n+1}^{n+1}&(b+b_{n+1})(x_1\diamond\dots\diamond x_{n+1}))
\\&=\psi_n(\pi_n^n(b)(x_1\diamond\dots\diamond x_n))\psi(x_{n+1})+\psi_{n+1}((\psi_{n+1}^{(1)})^{-1}(b_{n+1})(x_1 \diamond\dots\diamond x_{n+1}))
\\&=(b+b_{n+1})\psi_{n+1}(x_1\diamond\dots\diamond x_{n+1}).
\end{align*}
For $K \in \mathcal{K}(X(E_{n+1},\mathbf{N}^{n+1},\mathbf{S}^{n+1}))$, we have $\pi_{n+1}^{n+1}(\psi_{n+1}^{(1)}(K))=K$ by definition of $\pi_{n+1}^{n+1}$.
\end{proof}

For $n \geq 1$, define a closed subset $E_{\mathrm{sg}}^n:=(s^n)^{-1}(E_{\mathrm{sg}}^0)$ of $E^n$, let $\omega:\mathcal{L}(X(E_n,\mathbf{N}^n,\mathbf{S}^n)) \to \mathcal{L}(X(E_{\mathrm{sg}}^n, (\mathbf{N}^n)^{E_{\mathrm{sg}}^n}, (\mathbf{S}^n)^{E_{\mathrm{sg}}^n}))$ be the homomorphism obtained from Proposition~\ref{omega is a homomorphism}. The following proposition is a generalization of \cite[Lemma~5.1]{Katsura:TAMS04}.

\begin{prop}\label{define pi_{k}^{n} when 0 leq k <n}
Fix an injective covariant Toeplitz representation $(\psi,\pi)$ of $X(E,\mathbf{N},\mathbf{S})$. For $n \geq 1$, there is a homomorphism $\pi_{0}^{n}:B_{[0,n]} \to C_0(E_{\mathrm{sg}}^0)$ such that $\pi_{0}^{n}(b_0+b)=\pi^{-1}(b_0) \vert_{E_{\mathrm{sg}}^0}$ for all $b_0 \in B_0, b \in B_{[1,n]}$. There is a homomorphism $\pi_{0}^{\infty}:B_{[0,\infty]} \to C_0(E_{\mathrm{sg}}^0)$, such that $\pi_{0}^{\infty}(b_0+\dots+b_n)=\pi^{-1}(b_0)\vert_{E_{\mathrm{sg}}^0}$, for all $n \geq 0$, and for all $b_0+\dots + b_n \in B_{[0,n]}$. For $n \geq 2$ and $1 \leq k \leq n-1$, there is a homomorphism $\pi_{k}^{n}:B_{[0,n]} \to \mathcal{L}(X(E_{\mathrm{sg}}^k, (\mathbf{N}^k)^{E_{\mathrm{sg}}^k}, (\mathbf{S}^k)^{E_{\mathrm{sg}}^k}))$ such that $\pi_{k}^{n}(b+c)=\omega\circ\pi_{k}^{k}(b)$ for all $b \in B_{[0,k]}, c \in B_{[k+1,n]}$.
\end{prop}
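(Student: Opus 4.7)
My plan is to construct the three families $\pi_0^n$, $\pi_0^\infty$, and $\pi_k^n$ (for $1 \leq k \leq n-1$) in sequence, with well-definedness as the main content. For $\pi_0^n$, the natural definition is $\pi_0^n(b_0+b) := \pi^{-1}(b_0)\vert_{E^0_{\mathrm{sg}}}$ for $b_0 \in B_0$ and $b \in B_{[1,n]}$; its well-definedness reduces to a \emph{key claim}: $\pi(f) \in B_{[1,n]}$ implies $f \in C_0(E^0_{\mathrm{rg}})$. To prove the key claim, I apply $\pi_n^n$ from Proposition~\ref{define pi_n^n on X(E,N,S)}. Unravelling its inductive definition, $\pi_n^n(\pi(f))$ is the iterated left action $\phi(f) \otimes \mathrm{id}^{\otimes n-1}$ on $X(E_n,\mathbf{N}^n,\mathbf{S}^n)$, and $\pi_n^n(\psi_j^{(1)}(K_j)) = K_j \otimes \mathrm{id}^{\otimes n-j}$ under the decomposition $X(E_n,\ldots) \cong X(E_j,\ldots) \otimes_{C_0(E^0)} X(E_{n-j},\ldots)$ (via Proposition~\ref{X(E,N,S)^{otimes n} isomorphic with X(E_n,N^n,S^n)}). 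Hence $\pi(f) \in B_{[1,n]}$ yields
\[
\phi(f) \otimes \mathrm{id}^{\otimes n-1} = \sum_{j=1}^{n} K_j \otimes \mathrm{id}^{\otimes n-j}
\]
in $\mathcal{L}(X(E_n,\mathbf{N}^n,\mathbf{S}^n))$ with each $K_j \in \mathcal{K}(X(E_j,\ldots))$. Coupling this with Proposition~\ref{phi^{-1}(K(X)) intersects ker(phi^{perp}) of X_{E,N,S}} applied at the various levels, and using injectivity of $\pi$ to exclude the kernel of $\phi$, I conclude $f \in J_{X(E,\mathbf{N},\mathbf{S})} = C_0(E^0_{\mathrm{rg}})$.

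For $\pi_0^\infty$, the family $\{\pi_0^n\}$ is manifestly compatible --- $\pi_0^{n+1}\vert_{B_{[0,n]}} = \pi_0^n$, since any decomposition $b_0 + b$ valid for $\pi_0^n$ remains valid for $\pi_0^{n+1}$ --- so they assemble to a homomorphism on the dense $*$-subalgebra $\bigcup_n B_{[0,n]} \subseteq B_{[0,\infty]}$, and extend by continuity to yield $\pi_0^\infty$.

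For $\pi_k^n$ with $1 \leq k \leq n-1$, I set $\pi_k^n(b+c) := \omega(\pi_k^k(b))$ for $b \in B_{[0,k]}$ and $c \in B_{[k+1,n]}$. First I verify that $B_{[k+1,n]}$ is a two-sided ideal of $B_{[0,n]}$: expanding $\psi_j^{(1)}(K)\psi_l^{(1)}(L)$ in rank-one operators, using Proposition~\ref{X(E,N,S)^{otimes n} isomorphic with X(E_n,N^n,S^n)} to recognize higher tensor products, and applying $\psi(x)^*\psi(y) = \pi(\langle x,y \rangle_{C_0(E^0)})$ yields $B_j B_l \subseteq B_{\max(j,l)}$, from which the ideal property is immediate. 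Well-definedness of $\pi_k^n$ then requires $\omega(\pi_k^k(T)) = 0$ for $T \in B_{[0,k]} \cap B_{[k+1,n]}$; the $E_k$-level analogue of the key claim shows that $\pi_k^k(T)(\xi) \in X(E_k,\mathbf{N}^k,\mathbf{S}^k)_{C_0(E^0_{\mathrm{rg}})}$ for all $\xi$, and the characterization of $\ker(\omega)$ in Proposition~\ref{omega is a homomorphism} then delivers $\pi_k^k(T) \in \ker(\omega)$. Linearity and multiplicativity of $\pi_k^n$ are formal consequences of those of $\omega$ and $\pi_k^k$ together with the ideal property.

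The hard part of the whole construction is the key claim: extracting $f \in C_0(E^0_{\mathrm{rg}})$ from the operator identity displayed above. This requires a careful combination of compactness arguments at each tensor level, Katsura's vertex decomposition via Proposition~\ref{phi^{-1}(K(X)) intersects ker(phi^{perp}) of X_{E,N,S}}, and the injectivity of $\pi$ to rule out both the non-finite-receiver contribution to $E^0_{\mathrm{sg}}$ and the limit-of-sources contribution $\overline{E^0_{\mathrm{sce}}}$. Once this is in hand, all other parts of the proof are formal.
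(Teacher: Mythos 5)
Your overall architecture (define the maps, reduce everything to well-definedness, use the kernel description of $\omega$ for $\pi_k^n$) matches the paper's, but the central well-definedness claim --- which is the entire mathematical content of the proposition --- is not proved, and the mechanism you propose for it is too weak. You reduce to the key claim that $\pi(f) \in B_{[1,n]}$ forces $f \in C_0(E^0_{\mathrm{rg}})$, and you propose to extract this from the single operator identity $\phi(f) \otimes \id^{\otimes n-1} = \sum_{j=1}^{n} K_j \otimes \id^{\otimes n-j}$ obtained by applying $\pi_n^n$. That identity is strictly weaker than the hypothesis $\pi(f) = -\sum_j \psi_j^{(1)}(K_j)$, and for $n \geq 2$ it is genuinely insufficient: take a discrete graph in which a vertex $v$ receives infinitely many edges $e_1, e_2, \dots$ whose sources receive no edges. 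Then $v \notin E^0_{\mathrm{fin}}$, yet $(r^n)^{-1}(v) = \emptyset$, so $\phi_n(\delta_v) = \phi(\delta_v) \otimes \id^{\otimes n-1} = 0$ and your displayed identity holds vacuously with all $K_j = 0$ even though $\phi(\delta_v)$ is a non-compact projection. So no combination of ``compactness arguments at each tensor level'' applied to that identity can recover $f \in C_0(E^0_{\mathrm{fin}})$; the information distinguishing $\pi(f) \in B_{[1,n]}$ from $\pi_n^n(\pi(f)) \in \pi_n^n(B_{[1,n]})$ has been discarded. (Your use of injectivity of $\pi$ to handle the $\overline{E^0_{\mathrm{sce}}}$ contribution is fine; it is the finite-receiver half of $J_{X(E,\mathbf{N},\mathbf{S})}$ that your route cannot reach. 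Applying the $\pi_j^j$ for $j < n$ instead does not help, since they are only defined on $B_{[0,j]}$ and one cannot apply them to the given relation without already knowing the top levels collapse.)

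What is actually needed, and what the paper does, is a level-by-level peeling argument: an induction on $n$ in which the top summand $b_{n+1}$ of a vanishing sum $b_0 + \dots + b_{n+1} = 0$ is shown to lie in $B_n$ (and, for the $\pi_k^n$ statement, in $\psi_k^{(1)}$ of compacts built from $X(E_k,\mathbf{N}^k,\mathbf{S}^k)_{C_0(E^0_{\mathrm{rg}})}$), using Katsura's structural result \cite[Proposition~5.12]{Katsura:JFA04} on $B_{[0,n]} \cap B_{n+1}$. This reduces everything to the case $n=1$, where $\pi(f) + \psi^{(1)}(K) = 0$ and injectivity of $\pi$ give $\phi(f) = -K \in \mathcal{K}(X(E,\mathbf{N},\mathbf{S}))$ directly. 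Your proposal contains no substitute for this reduction, and the same gap propagates to the ``$E_k$-level analogue of the key claim'' that you invoke for the well-definedness of $\pi_k^n$. The remaining ingredients of your proposal --- $B_j B_l \subseteq B_{\max(j,l)}$, the direct-limit construction of $\pi_0^\infty$, and deducing $\omega \circ \pi_k^k(T) = 0$ from the range condition via Equation~(\ref{compute of the kernel of omega}) rather than from the explicit form of $\ker(\omega) \cap \mathcal{K}$ used in the paper --- are all fine, but they are the formal part.
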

\begin{proof}
First of all, we claim that for $n \geq 1$, if $b_0 \in B_0$ and $b \in B_{[1,n]}$ satisfying $b_0+b=0$, then $b_0 \in \pi(C_0(E_{\mathrm{rg}}^0))$. We prove this claim by induction on $n \geq 1$. When $n=1$. For $f \in C_0(E^0)$ and $K \in \mathcal{K}(X(E,\mathbf{N},\mathbf{S}))$, if $\pi(f)+\psi^{(1)}(K)=0$, then $f \in C_0(E_{\mathrm{rg}}^0)$. Suppose the claim is true for $n \geq 1$. Then for $b_0 \in B_0, \dots, b_{n+1} \in B_{n+1}$, if $b_0+\dots+b_{n+1}=0$, then by \cite[Proposition~5.12]{Katsura:JFA04} $b_{n+1} \in B_n$. By the induction assumption, $b_0 \in \pi(C_0(E_{\mathrm{rg}}^0))$. So we finish the proof of the claim. For each $n \geq 1$, straightforward calculation shows that there is a well-defined homomorphism $\pi_{0}^{n}$ satisfying the desired formula.

Since the core $B_{[0,\infty]}$ is the direct limit of the increasing sequence $\{B_{[0,n]}\}_{n=1}^{\infty}$, we obtain the homomorphism $\pi_0^\infty$.

We claim that for $n \geq 2, 1 \leq k \leq n-1, b \in B_{[0,k]}$, and $c \in B_{[k+1,n]}$, if $b+c=0$, then $b=\psi_k^{(1)}(K)$, for some $K \in \overline{\lsp}\{\Theta_{\xi,\eta}:\xi,\eta \in X(E_k,\mathbf{N}^k,\mathbf{S}^k)_{C_0(E_{\mathrm{rg}}^0)} \}$. We prove this claim by induction on $n \geq 2$. When $n=2, k=1$. For $b_0+b_1+b_2 \in B_{[0,2]}$, if $b_0+b_1+b_2=0$, then by \cite[Proposition~5.12]{Katsura:JFA04} $b_0+b_1=\psi^{(1)}(K)$, for some $K \in \overline{\lsp}\{\Theta_{\xi,\eta}:\xi,\eta \in X(E,\mathbf{N},\mathbf{S})_{C_0(E_{\mathrm{rg}}^0)} \}$. Suppose that the claim is true for $n \geq 2$. For $b_0+\dots+b_{n+1} \in B_{[0,n+1]}$, if $b_0+\dots+b_{n+1}=0$, then by \cite[Proposition~5.12]{Katsura:JFA04} $b_0+\dots+b_n=-b_{n+1}=\psi_n^{(1)}(K)$, for some $K \in \overline{\lsp}\{\Theta_{\xi,\eta}:\xi,\eta \in X(E_n,\mathbf{N}^n,\mathbf{S}^n)_{C_0(E_{\mathrm{rg}}^0)} \}$. For $1 \leq k \leq n-1$, by the induction, $b_0+\dots+b_k=\psi_k^{(1)}(K')$, for some $K' \in \overline{\lsp}\{\Theta_{\xi,\eta}:\xi,\eta \in X(E_k,\mathbf{N}^k,\mathbf{S}^k)_{C_0(E_{\mathrm{rg}}^0)} \}$. So we finish the proof of the claim. Now fix $n \geq 2, 1 \leq k \leq n-1, b \in B_{[0,k]}$, and $c \in B_{[k+1,n]}$ with $b+c=0$. By the claim there exists $K \in \overline{\lsp}\{\Theta_{\xi,\eta}:\xi,\eta \in X(E_k,\mathbf{N}^k,\mathbf{S}^k)_{C_0(E_{\mathrm{rg}}^0)} \}$ such that $b=\psi_k^{(1)}(K)$. By Proposition~\ref{define pi_n^n on X(E,N,S)}, $\pi_{k}^{k}(b)=K$. By Proposition~\ref{omega maps K(X(E,N,S)) onto K(X(X^1,N^X^1,S^X^1))}, $\omega(\pi_k^k(b))=0$. Hence straightforward calculation shows that there is a well-defined homomorphism $\pi_{k}^{n}$ satisfying the desired formula.
\end{proof}

\begin{cor}\label{direct sum of pi_{k}^{n} from 0 to n-1, direct sum of pi_{k}^{n} from 0 to n}
Fix an injective covariant Toeplitz representation $(\psi,\pi)$ of $X(E,\mathbf{N},\mathbf{S})$. For $n \geq 1$, we have $\bigcap_{k=0}^{n-1}\ker(\pi_{k}^{n})=B_n$. For $n \geq 0$, we have $\bigcap_{k=0}^{n}\ker(\pi_{k}^{n})=\{0\}$.
\end{cor}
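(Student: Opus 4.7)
I plan to establish the first equality by induction on $n$ and then deduce the second from it. For the easy inclusion $B_n \subseteq \bigcap_{k=0}^{n-1}\ker(\pi_k^n)$, decompose $a \in B_n$ as $a = 0 + \dots + 0 + a$ with the last summand in $B_n$, and apply the defining formulas of Propositions~\ref{define pi_n^n on X(E,N,S)} and~\ref{define pi_{k}^{n} when 0 leq k <n} directly. The base case $n = 1$ of the reverse inclusion is quick: if $a = b_0 + b_1 \in \ker(\pi_0^1)$, then $b_0 = \pi(f)$ with $f \in C_0(E_{\mathrm{rg}}^0) = J_{X(E,\mathbf{N},\mathbf{S})}$ by Proposition~\ref{phi^{-1}(K(X)) intersects ker(phi^{perp}) of X_{E,N,S}}, and covariance of $(\psi,\pi)$ forces $b_0 = \psi^{(1)}(\phi(f)) \in B_1$, so $a \in B_1$.

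For the inductive step, take $a \in \bigcap_{k=0}^{n}\ker(\pi_k^{n+1})$ and fix any decomposition $a = b_0 + \dots + b_{n+1}$. A direct inspection of the defining formulas shows that $\pi_k^{n+1}$ agrees with $\pi_k^n$ on $B_{[0,n]}$ for $0 \le k \le n-1$, so $b_0 + \dots + b_n$ lies in $\bigcap_{k=0}^{n-1}\ker(\pi_k^n) = B_n$ by the inductive hypothesis. Write $b_0 + \dots + b_n = \psi_n^{(1)}(K)$. Combining $\pi_n^n(\psi_n^{(1)}(K)) = K$ with $\pi_n^{n+1}(a) = \omega(\pi_n^n(b_0 + \dots + b_n)) = 0$, Proposition~\ref{omega maps K(X(E,N,S)) onto K(X(X^1,N^X^1,S^X^1))}(\ref{ker of omega on K(X(E,N,S))}) applied at level $n$ places $K$ in $\overline{\lsp}\{\Theta_{\xi,\eta} : \xi, \eta \in X(E_n,\mathbf{N}^n,\mathbf{S}^n)_{C_0(E_{\mathrm{rg}}^0)}\}$.

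The main obstacle is showing $\psi_n^{(1)}(K) \in B_{n+1}$ for such $K$. My plan is to approximate $\xi \in X(E_n,\mathbf{N}^n,\mathbf{S}^n)_{C_0(E_{\mathrm{rg}}^0)}$ by finite sums $\sum_i \xi_i \cdot f_i$ with $f_i \in C_0(E_{\mathrm{rg}}^0) = J_{X(E,\mathbf{N},\mathbf{S})}$, use $\psi_n(\xi_i \cdot f_i) = \psi_n(\xi_i)\pi(f_i)$ together with covariance to replace $\pi(f_i)$ by $\psi^{(1)}(\phi(f_i))$, expand $\phi(f_i)$ via the rank-one decomposition of Equation~(\ref{computation of phi(f) in the twisted graph correspondence}), and recognise each resulting term $\psi_n(\xi_i)\psi(u)\psi(u)^*\psi_n(\eta)^*$ as $\psi_{n+1}^{(1)}(\Theta_{\xi_i \diamond u, \eta \diamond u}) \in B_{n+1}$. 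Since $B_{n+1}$ is the image of the $\ast$-homomorphism $\psi_{n+1}^{(1)}$ and therefore closed, a limit argument yields $\psi_n^{(1)}(K) \in B_{n+1}$, so $a = \psi_n^{(1)}(K) + b_{n+1} \in B_{n+1}$, completing the induction.

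The second identity then falls out quickly: for $n = 0$, $\pi_0^0 = \pi^{-1}$ is injective, so its kernel is trivial. For $n \ge 1$, the first identity reduces $\bigcap_{k=0}^{n}\ker(\pi_k^n)$ to $B_n \cap \ker(\pi_n^n)$, and Proposition~\ref{define pi_n^n on X(E,N,S)} shows that $\pi_n^n$ restricted to $B_n$ is the inverse of the injective homomorphism $\psi_n^{(1)}$, so this intersection is $\{0\}$.
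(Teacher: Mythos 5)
Your argument is correct and its skeleton is the same as the paper's: induction on $n$, the base case via $\pi_0^1$ and covariance, the inductive step via $\pi_k^{n+1}=\pi_k^n$ on $B_{[0,n]}$ for $k\leq n-1$, writing $b_0+\dots+b_n=\psi_n^{(1)}(K)$, and using $\omega(K)=0$ together with Proposition~\ref{omega maps K(X(E,N,S)) onto K(X(X^1,N^X^1,S^X^1))}(\ref{ker of omega on K(X(E,N,S))}) to locate $K$ in $\overline{\lsp}\{\Theta_{\xi,\eta}:\xi,\eta\in X(E_n,\mathbf{N}^n,\mathbf{S}^n)_{C_0(E_{\mathrm{rg}}^0)}\}$; the second identity is handled identically. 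The one place you genuinely diverge is the step $\psi_n^{(1)}(K)\in B_{n+1}$: the paper disposes of this by citing \cite[Proposition~5.12]{Katsura:JFA04}, whereas you prove it directly by factoring $\xi\cdot f$ with $f\in J_{X(E,\mathbf{N},\mathbf{S})}=C_0(E_{\mathrm{rg}}^0)$, invoking covariance to replace $\pi(f)$ by $\psi^{(1)}(\phi(f))$, and expanding $\phi(f)$ into rank-one operators via Equation~(\ref{computation of phi(f) in the twisted graph correspondence}) so that each term becomes $\psi_{n+1}^{(1)}(\Theta_{\xi_i\diamond u,\eta\diamond u})$, then closing with the fact that $B_{n+1}$, being the image of a $*$-homomorphism, is closed. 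This buys self-containedness (it is in effect a proof of the relevant special case of Katsura's Proposition~5.12 adapted to the twisted setting) at the cost of a slightly longer approximation argument; the paper's citation is shorter but leans on the external result. Both are sound.
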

\begin{proof}
We prove the first statement by induction on $n \geq 1$. When $n=1$. For $f \in C_0(E^0)$ and $K \in \mathcal{K}(X(E,\mathbf{N},\mathbf{S}))$, if $\pi(f)+\psi^{(1)}(K) \in \ker(\pi_0^1)$ then by Proposition~\ref{define pi_{k}^{n} when 0 leq k <n} $f(E_{\mathrm{sg}}^0)=0$. The covariance of $(\psi,\pi)$ implies that $\pi(f)+\psi^{(1)}(K)=\psi^{(1)}(\phi(f)+K)$. Conversely, for $K \in \mathcal{K}(X(E,\mathbf{N},\mathbf{S}))$, by Proposition~\ref{define pi_{k}^{n} when 0 leq k <n} $\pi_0^1(\psi^{(1)}(K))=0$. So $\ker(\pi_0^1)=B_1$. Suppose that $\bigcap_{k=0}^{n-1}\ker(\pi_{k}^{n})=B_n$, for some $n \geq 1$. We show that $\bigcap_{k=0}^{n}\ker(\pi_{k}^{n+1})=B_{n+1}$. For $b_0+\dots+b_{n+1} \in \bigcap_{k=0}^{n}\ker(\pi_{k}^{n+1})$, by Proposition~\ref{define pi_{k}^{n} when 0 leq k <n}, $b_0+\dots+b_n \in \bigcap_{k=0}^{n-1}\ker(\pi_{k}^{n})$. By the induction assumption, there exists $K \in \mathcal{K}(X(E_n,\mathbf{N}^n,\mathbf{S}^n))$ such that $b_0+\dots+b_n=\psi_n^{(1)}(K)$. By Proposition~\ref{define pi_n^n on X(E,N,S)}, 
\[
\pi_n^{n+1}(\psi_n^{(1)}(K)+b_{n+1})=\omega\circ\pi_n^n(\psi_n^{(1)}(K))=\omega(K)=0.
\]
By Proposition~\ref{omega maps K(X(E,N,S)) onto K(X(X^1,N^X^1,S^X^1))} and by \cite[Proposition~5.12]{Katsura:JFA04}, $\psi_n^{(1)}(K) \in B_{n+1}$. So $\bigcap_{k=0}^{n}\ker(\pi_{k}^{n+1}) \subset B_{n+1}$. By definition of $\pi_{k}^{n+1} (k \leq n)$ we clearly have $B_{n+1} \subset \bigcap_{k=0}^{n}\ker(\pi_{k}^{n+1})$. So $B_{n+1}=\bigcap_{k=0}^{n}\ker(\pi_{k}^{n+1})$.

The second statement is trivial for $n=0$. When $n \geq 1, \bigcap_{k=0}^{n}\ker(\pi_{k}^{n})=B_n \cap \ker(\pi_{n}^{n})$. For $\psi_n^{(1)}(K) \in B_n \cap \ker(\pi_{n}^{n})$, by Proposition~\ref{define pi_n^n on X(E,N,S)}, $\pi_{n}^{n}(\psi_n^{(1)}(K))=K=0$. Hence $\bigcap_{k=0}^{n}\ker(\pi_{k}^{n})=\{0\}$.
\end{proof}

We finish the section by constructing a covariant Toeplitz representation (of a modified topological graph $E_Y$) from a non-covariant Toeplitz representation of $E$ under certain condition. This technique is a generalization of Katsura's work in \cite[Section~3]{Katsura:IJM06}. Fix a closed subset $Y$ of $E_{\mathrm{rg}}^0$ in the subspace topology of $E_{\mathrm{rg}}^0$ and define $\partial Y:=\overline{Y} \setminus Y$. 

\begin{defn}[{\cite[Page~799]{Katsura:IJM06}}]\label{define E_Y where Y is a closed subset of E_rg^0}
Define a topological graph $E_Y:=(E_Y^0,E_Y^1,r_Y,s_Y)$ as follows:
\begin{enumerate}
\item $E_Y^0:=E^0 \amalg_{\partial Y}\overline{Y}=\{(v,0), (w,1):v \in E^0, w \in \overline{Y}\}$;
\item $E_Y^1:=E^1 \amalg_{s^{-1}(\partial Y)}s^{-1}(\overline{Y})=\{(e,0), (f,1):e \in E^1, f \in s^{-1}(\overline{Y}) \}$;
\item $r_Y(e,n):=(r(e),0)$, for all $(e,n) \in E_Y^1$; and
\item $s_Y(e,n):=(s(e),n)$, for all $(e,n) \in E_Y^1$.
\end{enumerate}
Define a cover $\mathbf{N}_Y:=\big\{\big(N_\alpha \times \{0\}\big) \cup \big((N_\alpha \cap s^{-1}(\overline{Y})) \times \{1\}\big) \big\}_{\alpha \in \Lambda}$ of $E_Y^1$ by precompact open $s_Y$-sections. Define a $1$-cocycle $\mathbf{S}_Y:=\{s_{\alpha\beta, Y}\}_{\alpha, \beta \in \Lambda}$ relative to $\mathbf{N}_Y$ by $s_{\alpha\beta,Y}(e,n):=s_{\alpha\beta}(e)$ for all $(e,n) \in (\overline{N_{\alpha\beta}} \times \{0\}) \cup (\overline{N_{\alpha\beta} \cap s^{-1}(\overline{Y})} \times \{1\})$.
\end{defn}

Let $p_Y^0: E_Y^0 \to E^0$ and $p_Y^1:E_Y^1 \to E^1$ be the two surjective proper continuous projections. Let $(p_Y^0)_*:C_0(E^0) \to C_0(E_Y^0)$ be injective homomorphism obtained from $p_Y^0$, and let $(p_Y^1)_*:X(E,\mathbf{N},\mathbf{S}) \to X(E_Y,\mathbf{N}_Y,\mathbf{S}_Y)$ be the norm-preserving linear map obtained from $p_Y^1$. Let $t_Y=(t_Y^0,t_Y^1)$ be the injective universal covariant Toeplitz representation of $X(E_Y,\mathbf{N}_Y,\mathbf{S}_Y)$ in $\mathcal{O}(E_Y,\mathbf{N}_Y,\mathbf{S}_Y)$. Fix a Toeplitz representation $(\psi,\pi)$ of $X(E,\mathbf{N},\mathbf{S})$ in a $C^*$-algebra $B$ such that $C_0(E_{\mathrm{rg}}^0 \setminus Y) \subset \{f \in C_0(E_{\mathrm{rg}}^0 ): \pi(f)=\psi^{(1)}(\phi(f))\}$.

The following lemma is a generalization of \cite[Page~801, Lemma~3.9]{Katsura:IJM06}.

\begin{lemma}\label{define psi_rg,pi_rg}
There is a linear map $\psi_{\mathrm{rg}}:X(E,\mathbf{N},\mathbf{S})_{C_0(E_{\mathrm{rg}}^0)} \to B$ such that $\psi_{\mathrm{rg}}(x\cdot f)=\psi(x)\psi^{(1)}(\phi(f))$ for all $x \in X(E,\mathbf{N},\mathbf{S})$ and $f \in C_0(E_{\mathrm{rg}}^0)$. There is a homomorphism $\pi_{\mathrm{rg}}:C_0(E_{\mathrm{rg}}^0) \to B$ such that $\pi_{\mathrm{rg}}(f)=\psi^{(1)}(\phi(f))$ for all $f \in C_0(E_{\mathrm{rg}}^0)$.
\end{lemma}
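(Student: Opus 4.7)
The plan is to handle $\pi_{\mathrm{rg}}$ first (which is essentially free) and then construct $\psi_{\mathrm{rg}}$ by defining it on the dense subspace $\lsp\{x \cdot f : x \in X(E,\mathbf{N},\mathbf{S}), f \in C_0(E_{\mathrm{rg}}^0)\}$ of $X(E,\mathbf{N},\mathbf{S})_{C_0(E_{\mathrm{rg}}^0)}$, and extending by continuity. The hypothesis about $Y$ plays no role in the existence assertions themselves; it is simply the context in which this lemma will later be applied.

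For $\pi_{\mathrm{rg}}$: Proposition~\ref{phi^{-1}(K(X)) intersects ker(phi^{perp}) of X_{E,N,S}} gives $C_0(E_{\mathrm{rg}}^0) = J_{X(E,\mathbf{N},\mathbf{S})} \subset \phi^{-1}(\mathcal{K}(X(E,\mathbf{N},\mathbf{S})))$, so $\phi|_{C_0(E_{\mathrm{rg}}^0)}$ lands in $\mathcal{K}(X(E,\mathbf{N},\mathbf{S}))$. I would simply set $\pi_{\mathrm{rg}} := \psi^{(1)} \circ \phi|_{C_0(E_{\mathrm{rg}}^0)}$, which is a homomorphism as the composition of two homomorphisms.

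For $\psi_{\mathrm{rg}}$: I would first record the commutation identities $\pi(a)\psi^{(1)}(T) = \psi^{(1)}(\phi(a) T)$ and $\psi^{(1)}(T) \pi(a) = \psi^{(1)}(T \phi(a))$ for $a \in C_0(E^0)$, $T \in \mathcal{K}(X(E,\mathbf{N},\mathbf{S}))$. These reduce to the case $T = \Theta_{u,v}$, where they follow directly from the Toeplitz relation $\psi(a \cdot u) = \pi(a)\psi(u)$ and its adjoint, and then extend by linearity and continuity. I would then define $\psi_{\mathrm{rg}}(\sum_i x_i \cdot f_i) := \sum_i \psi(x_i) \psi^{(1)}(\phi(f_i))$ on the dense subspace, proving well-definedness and contractivity in one stroke by computing, for $\xi := \sum_i x_i \cdot f_i$,
\[
\Big(\sum_i \psi(x_i)\psi^{(1)}(\phi(f_i))\Big)^* \Big(\sum_j \psi(x_j)\psi^{(1)}(\phi(f_j))\Big) = \psi^{(1)}\bigl(\phi(\langle \xi,\xi \rangle_{C_0(E^0)})\bigr),
\]
using the commutation identities together with $\psi(x)^*\psi(y) = \pi(\langle x,y\rangle_{C_0(E^0)})$ to collapse the double sum into $\sum_{i,j}\psi^{(1)}(\phi(f_i^* \langle x_i,x_j\rangle f_j))$. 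Since $\psi^{(1)}$ and $\phi$ are $*$-homomorphisms they are contractive, so the right-hand side has norm at most $\|\xi\|^2$. Well-definedness follows by specializing to $\xi = 0$, and the contractive bound gives the unique continuous extension to $X(E,\mathbf{N},\mathbf{S})_{C_0(E_{\mathrm{rg}}^0)}$; the prescribed formula $\psi_{\mathrm{rg}}(x \cdot f) = \psi(x)\psi^{(1)}(\phi(f))$ is then immediate.

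The main (and essentially only) obstacle is the key computation above; once the two commutation identities with $\psi^{(1)}$ are in hand, the sum telescopes cleanly into an inner-product expression and the contractive estimate falls out, after which the continuous extension is routine.
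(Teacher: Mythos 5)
Your proof is correct and supplies exactly the "straightforward calculation" that the paper leaves to the reader: $\pi_{\mathrm{rg}}=\psi^{(1)}\circ\phi|_{C_0(E_{\mathrm{rg}}^0)}$ makes sense by Proposition~\ref{phi^{-1}(K(X)) intersects ker(phi^{perp}) of X_{E,N,S}}, and the identity $\bigl(\sum_i\psi(x_i)\psi^{(1)}(\phi(f_i))\bigr)^*\bigl(\sum_j\psi(x_j)\psi^{(1)}(\phi(f_j))\bigr)=\psi^{(1)}(\phi(\langle\xi,\xi\rangle_{C_0(E^0)}))$ gives well-definedness and contractivity on the dense span in one stroke. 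You are also right that the hypothesis involving $Y$ is not needed here; it only enters later in Proposition~\ref{tilde{psi},tilde{pi} is a cov Toep rep}.
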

\begin{proof}
Straightforward calculation yields the results.
\end{proof}

\begin{lemma}\label{decompose function in X amalg_partial Y Y}
Let $X$ be a locally compact Hausdorff space, let $U$ be an open set in $X$, and let $Y$ be a closed subset of $U$ in the subspace topology of $U$. Then for any $f \in C_0(X \amalg_{\overline{Y} \setminus Y} \overline{Y})$, there exist $g \in C_0(U)$ and $h \in C_0(X)$, such that $f(x,0)=g(x)+h(x)$ for all $x \in X$, and $f(y,1)=h(y)$ for all $y \in \overline{Y}$.
\end{lemma}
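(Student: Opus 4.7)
The plan is to construct $h$ first on a natural closed subset of $X$ where its values are forced, then extend using a $C_0$-Tietze theorem, and finally define $g$ as the difference. I will exploit the standard identification of $C_0(U)$ with the closed ideal $\{k\in C_0(X) : k|_{X\setminus U}=0\}$ of $C_0(X)$, so that the conclusion reduces to producing $h\in C_0(X)$ and $g\in C_0(X)$ vanishing on $X\setminus U$ with $h(y)=f(y,1)$ for $y\in\overline{Y}$ and $f(x,0)=g(x)+h(x)$ on $X$.

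The crucial observation is that in the amalgamated space $X\amalg_{\overline{Y}\setminus Y}\overline{Y}$, the points $(y,0)$ and $(y,1)$ are identified for each $y\in\partial Y:=\overline{Y}\setminus Y$; hence any $f$ in $C_0$ of this space satisfies $f(y,0)=f(y,1)$ on $\partial Y$. Also, since $Y$ is closed in $U$, one checks that $\overline{Y}\cap U=Y$, so $\partial Y\subset X\setminus U$. Setting $F:=\overline{Y}\cup(X\setminus U)$, a closed subset of $X$, I would define a candidate $h_0$ on $F$ by
\[
h_0(z):=\begin{cases} f(z,1), & z\in\overline{Y},\\ f(z,0), & z\in X\setminus U. \end{cases}
\]
The two prescriptions agree on $\overline{Y}\cap(X\setminus U)=\partial Y$ by the identification above, so $h_0$ is well-defined, and the continuity and $C_0$-decay of $f$ on the glued space imply $h_0\in C_0(F)$.

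Next I would apply the $C_0$-Tietze extension for locally compact Hausdorff spaces — concretely, extend $h_0$ by $0$ at $\infty$ to get a continuous function on the closed subset $F\cup\{\infty\}$ of the one-point compactification $X^+$, apply Tietze on $X^+$, and restrict back — yielding $h\in C_0(X)$ with $h|_F=h_0$. Setting $g:=f(\cdot,0)-h$, one has $g\in C_0(X)$ as a difference of $C_0$-functions, and for $x\in X\setminus U$ the definition of $h_0$ gives $h(x)=f(x,0)$, hence $g(x)=0$; therefore $g\in C_0(U)$ under the identification above. The required identities $f(x,0)=g(x)+h(x)$ on $X$ and $f(y,1)=h(y)$ on $\overline{Y}$ are then immediate from the construction.

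The main obstacle is verifying that $h_0$ is continuous at points of $\partial Y$ inside $F$: a net in $F$ converging to $y_0\in\partial Y$ can approach either from the $\overline{Y}$-side (where $h_0$ is defined via $f(\cdot,1)$) or from the $(X\setminus U)$-side (where it is defined via $f(\cdot,0)$), and one must unwind the quotient topology on $X\amalg_{\partial Y}\overline{Y}$ to see that both branches produce the same limit $f(y_0,0)=f(y_0,1)$. Once this compatibility at the gluing seam is established, every remaining step is a routine application of Tietze and the identification of $C_0(U)$ with a closed ideal of $C_0(X)$.
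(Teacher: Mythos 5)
Your proof is correct. The paper itself gives no argument here --- it simply defers to the proof of Lemma~3.6 of Katsura's paper \cite{Katsura:IJM06} --- and your construction (restrict to the closed set $F=\overline{Y}\cup(X\setminus U)$, Tietze-extend to get $h$, subtract to get $g$) is the natural one and, as far as the present paper is concerned, fills in exactly the details it outsources. All the supporting facts check out: since $Y$ is closed in $U$ one has $\overline{Y}\cap U=Y$, so $\partial Y=\overline{Y}\cap(X\setminus U)\subset X\setminus U$ and the two branches of $h_0$ overlap precisely on $\partial Y$, where they agree because $(y,0)$ and $(y,1)$ are identified in $X\amalg_{\partial Y}\overline{Y}$; the identification of $C_0(U)$ with the ideal of functions in $C_0(X)$ vanishing on $X\setminus U$ is standard; and $g=f(\cdot,0)-h$ lands in that ideal by construction. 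The one place where you make more of a fuss than necessary is the continuity of $h_0$ at points of $\partial Y$: since the canonical maps $X\to X\amalg_{\partial Y}\overline{Y}$ and $\overline{Y}\to X\amalg_{\partial Y}\overline{Y}$ are closed embeddings, $f(\cdot,0)\in C_0(X)$ and $f(\cdot,1)\in C_0(\overline{Y})$ are each continuous outright, so $h_0$ is a function on the union of the two closed sets $\overline{Y}$ and $X\setminus U$ whose restrictions to each piece are continuous and agree on the overlap, and the pasting lemma gives continuity with no net argument; the same closed-embedding observation also gives the vanishing-at-infinity of $h_0$ on $F$, which is the one small point your write-up does not explicitly address but which is needed before invoking the $C_0$-Tietze theorem.
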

\begin{proof}
It follows from the proof of \cite[Lemma~3.6]{Katsura:IJM06}.
\end{proof}

The following proposition is a generalization of \cite[Proposition~3.15]{Katsura:IJM06}.

\begin{prop}\label{tilde{psi},tilde{pi} is a cov Toep rep}
There is a covariant Toeplitz representation $(\widetilde{\psi},\widetilde{\pi})$ of $X(E_Y,\mathbf{N}_Y,\mathbf{S}_Y)$ in $C^*(\psi,\pi)$ such that
\begin{enumerate}
\item\label{define widetilde{psi}} $\widetilde{\pi}(f)=\pi_{\mathrm{rg}}(g)+\pi(h)$, where $f \in C_0(E_Y^0), g \in C_0(E_{\mathrm{rg}}^0),h \in C_0(E^0)$, such that $f(v,0)=g(v)+h(v)$ for all $v \in E^0$, and $f(v,1)=h(v)$ for all $v \in Y$;
\item\label{define widetilde{pi}} $\widetilde{\psi}(x)=\psi_{\mathrm{rg}}(y)+\psi(z)$, where $x \in C_c(E_Y,\mathbf{N}_Y,\mathbf{S}_Y)$, $y \in X(E,\mathbf{N},\mathbf{S})_{C_0(E_{\mathrm{rg}}^0 )} \cap C_c(E,\mathbf{N},\mathbf{S}), z \in C_c(E,\mathbf{N},\mathbf{S})$ such that $x_\alpha(e,0)=y_\alpha(e)+z_\alpha(e)$ for all $e \in \overline{N_\alpha}$, and $x_\alpha(e,1)=z_\alpha(e)$ for all $e \in \overline{N_\alpha} \cap s^{-1}(Y)$;
\item\label{widetilde{T}^i circ (p_Y^i)^*=T^i}$\widetilde{\pi} \circ (p_Y^0)_*=\pi, \widetilde{\psi} \circ (p_Y^1)_*=\psi$; and
\item\label{C^*(widetilde{psi},widetilde{pi})=C^*(psi,pi)} $C^*(\widetilde{\psi},\widetilde{\pi})=C^*(\psi,\pi)$.
\end{enumerate}
Moreover, $\widetilde{\pi}$ is injective if and only if $\pi$ is injective and $C_0(E_{\mathrm{rg}}^0 \setminus Y)=\{f \in C_0(E_{\mathrm{rg}}^0 ): \pi(f)=\psi^{(1)}(\phi(f))\}$.
\end{prop}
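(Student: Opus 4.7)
The plan is to build $\widetilde{\pi}$ and $\widetilde{\psi}$ by the formulas given in (\ref{define widetilde{psi}}) and (\ref{define widetilde{pi}}), then check in turn: well-definedness, the Toeplitz relations, covariance, the compatibility identities of (\ref{widetilde{T}^i circ (p_Y^i)^*=T^i}) and (\ref{C^*(widetilde{psi},widetilde{pi})=C^*(psi,pi)}), and finally the injectivity characterization. First, for $\widetilde{\pi}$, Lemma~\ref{decompose function in X amalg_partial Y Y} (applied with $X = E^0$, $U = E_{\mathrm{rg}}^0$) produces the decomposition $f \mapsto (g,h)$; I must show that if $(g_1,h_1),(g_2,h_2)$ are two such decompositions, then $g_1 - g_2 \in C_0(E_{\mathrm{rg}}^0 \setminus Y)$ (since $g_1+h_1 = g_2+h_2$ on $E^0$ and $h_1 = h_2$ on $Y$), and then invoke the hypothesis $C_0(E_{\mathrm{rg}}^0 \setminus Y) \subset \{f \in C_0(E_{\mathrm{rg}}^0) : \pi(f)=\psi^{(1)}(\phi(f))\}$ and the definition of $\pi_{\mathrm{rg}}$ from Lemma~\ref{define psi_rg,pi_rg} to conclude $\pi_{\mathrm{rg}}(g_1-g_2) = \pi(g_1-g_2) = \pi(h_2-h_1)$, giving independence of the choice. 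An entirely analogous argument, using a twisted-correspondence version of Lemma~\ref{decompose function in X amalg_partial Y Y} applied fibrewise over each $N_\alpha$, handles the well-definedness of $\widetilde{\psi}$ on $C_c(E_Y,\mathbf{N}_Y,\mathbf{S}_Y)$; I then extend $\widetilde{\psi}$ by continuity after checking $\|\widetilde{\psi}(x)\| \leq \|x\|_{C_0(E_Y^0)}$ using the inner-product identity below.

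Next, verifying that $(\widetilde{\psi},\widetilde{\pi})$ is a Toeplitz representation amounts to straightforward calculations separating $E^0$-parts from $Y$-parts in each decomposition. The inner-product identity $\widetilde{\psi}(x)^*\widetilde{\psi}(y) = \widetilde{\pi}(\langle x,y\rangle_{C_0(E_Y^0)})$ is the main check: writing $x = y_x + z_x$ and $y = y_y + z_y$ as in (\ref{define widetilde{pi}}), one expands $(\psi_{\mathrm{rg}}(y_x)+\psi(z_x))^*(\psi_{\mathrm{rg}}(y_y)+\psi(z_y))$ and uses $\psi_{\mathrm{rg}}(y)^* = \psi(y)^*\psi^{(1)}(\phi(\cdot))$-type identities from Lemma~\ref{define psi_rg,pi_rg} together with the original Toeplitz relation for $(\psi,\pi)$; the resulting four terms assemble to $\pi_{\mathrm{rg}}(\cdot) + \pi(\cdot)$ applied to the correct decomposition of $\langle x,y\rangle_{C_0(E_Y^0)}$.

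The hard part will be covariance. I need to identify $J_{X(E_Y,\mathbf{N}_Y,\mathbf{S}_Y)} = C_0((E_Y)_{\mathrm{rg}}^0)$ via Proposition~\ref{phi^{-1}(K(X)) intersects ker(phi^{perp}) of X_{E,N,S}}, and then verify $\widetilde{\psi}^{(1)}(\phi_{E_Y}(f))=\widetilde{\pi}(f)$ for such $f$. The construction of $E_Y$ is designed precisely so that the regular vertices of $E_Y$ sit inside $E^0 \times \{0\}$ and correspond to vertices $v \in E_{\mathrm{rg}}^0$ that either avoid $\partial Y$ or lie in $Y$; I will decompose $f$ accordingly via Lemma~\ref{decompose function in X amalg_partial Y Y} so that $g$ lives in $C_0(E_{\mathrm{rg}}^0)$ and $h$ captures the part picked up on $Y \times \{1\}$. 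Using the explicit formula~(\ref{computation of phi(f) in the twisted graph correspondence}) for $\phi_{E_Y}(f)$ together with a partition of unity subordinate to the cover $\mathbf{N}_Y$ of $E_Y^1$, I compute $\widetilde{\psi}^{(1)}(\phi_{E_Y}(f))$ as a sum of rank-one type terms $\widetilde{\psi}^{\otimes 1}(\sqrt{h_\alpha f\circ r_Y}^{\Ind})\widetilde{\psi}^{\otimes 1}(\sqrt{h_\alpha f\circ r_Y}^{\Ind})^*$, and split each factor into its $(\cdot,0)$ and $(\cdot,1)$ pieces: the $(\cdot,1)$ pieces reassemble via $\psi$ and the original covariance (on the closed part of $Y$ where one has rangewise compactness via $r^{-1}(\overline{Y})$) to match $\pi(h)$, and the $(\cdot,0)$ pieces reassemble via $\psi_{\mathrm{rg}} = \psi^{(1)}\circ\phi$ to match $\pi_{\mathrm{rg}}(g)$.

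Finally, assertion (\ref{widetilde{T}^i circ (p_Y^i)^*=T^i}) is immediate, since for $f \in C_0(E^0)$ the decomposition $(g,h) = (0,f)$ works, giving $\widetilde{\pi}((p_Y^0)_*f)=\pi(f)$, and similarly for $\widetilde{\psi}$. Assertion (\ref{C^*(widetilde{psi},widetilde{pi})=C^*(psi,pi)}) then follows because (\ref{widetilde{T}^i circ (p_Y^i)^*=T^i}) places $\psi, \pi \in C^*(\widetilde{\psi},\widetilde{\pi})$ while the defining formulas (\ref{define widetilde{psi}})--(\ref{define widetilde{pi}}) place $\widetilde{\psi},\widetilde{\pi} \in C^*(\psi,\pi)$. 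For the moreover clause, one direction is clear: if $\widetilde{\pi}$ is injective then $\pi = \widetilde{\pi} \circ (p_Y^0)_*$ is injective, and any $f \in C_0(E_{\mathrm{rg}}^0)$ with $\pi(f)=\psi^{(1)}(\phi(f))$ forces the element $(f\cdot \chi_{E^0\times\{0\}} - f\cdot\chi_{\overline{Y}\times\{1\}})$ of $C_0(E_Y^0)$ to lie in $\ker\widetilde{\pi}$, hence to vanish, forcing $f\vert_Y = 0$, i.e., $f \in C_0(E_{\mathrm{rg}}^0 \setminus Y)$. Conversely, if $\pi$ is injective and the containment in the hypothesis is an equality, then given $f \in \ker\widetilde{\pi}$ with decomposition $(g,h)$, one computes that $h$ must vanish on $Y$ (by pairing with cutoff functions in $C_0(E^0)$ and using injectivity of $\pi$) and that $g \in C_0(E_{\mathrm{rg}}^0)$ satisfies $\pi(g) = -\pi(h)$, which together with $\pi$-injectivity and the sharpened equality forces $f=0$.
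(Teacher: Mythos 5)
Your overall route is the same as the paper's: define $\widetilde{\pi}$ and $\widetilde{\psi}$ via the decompositions of Lemma~\ref{decompose function in X amalg_partial Y Y}, use the standing hypothesis $C_0(E_{\mathrm{rg}}^0\setminus Y)\subset\{f:\pi(f)=\psi^{(1)}(\phi(f))\}$ to get well-definedness, identify $J_{X(E_Y,\mathbf{N}_Y,\mathbf{S}_Y)}=C_0(E_{\mathrm{rg}}^0\times\{0\})$, verify covariance with the partition-of-unity formula~(\ref{computation of phi(f) in the twisted graph correspondence}), and run a kernel argument for the injectivity clause. Two points need repair. First, in the covariance paragraph you appeal to ``the original covariance'' of $(\psi,\pi)$; but $(\psi,\pi)$ is not assumed covariant --- the identity $\psi^{(1)}(\phi(h))=\pi(h)$ for the $h$-part is available only because the decomposition of $f\in C_c(E_{\mathrm{rg}}^0\times\{0\})$ can be arranged with $h\in C_0(E_{\mathrm{rg}}^0\setminus Y)$, where the standing inclusion applies; the parenthetical about $r^{-1}(\overline Y)$ is not the mechanism. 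Also note that the cleaner computation does not split the vectors $\sqrt{h_\alpha f\circ r_Y}^{\Ind}$ into $(\cdot,0)$ and $(\cdot,1)$ pieces (which would generate cross terms in the rank-one operators); instead one writes $\phi_{E_Y}(f)$ as the $(p_Y^1)_*$-image of the rank-one decomposition of $\phi(g+h)$ in $\mathcal K(X(E,\mathbf{N},\mathbf{S}))$ and uses $\widetilde{\psi}\circ(p_Y^1)_*=\psi$, so that $\widetilde{\psi}^{(1)}(\phi_{E_Y}(f))=\psi^{(1)}(\phi(g))+\psi^{(1)}(\phi(h))=\pi_{\mathrm{rg}}(g)+\pi(h)$.

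Second, and more concretely, your witness for the forward direction of the ``moreover'' clause fails: for $G:=f\chi_{E^0\times\{0\}}-f\chi_{\overline Y\times\{1\}}$ (i.e.\ $G(v,0)=f(v)$, $G(v,1)=-f(v)$) a valid decomposition is $h=-f$, $g=2f$, so $\widetilde{\pi}(G)=2\psi^{(1)}(\phi(f))-\pi(f)=\pi(f)\neq 0$ in general; $G$ is not in $\ker\widetilde{\pi}$. The correct witness is the function supported on $\overline Y\times\{1\}$, namely $(p_Y^0)_*(f)-F$ with $F(v,0)=f(v)$, $F(v,1)=0$: its decomposition can be taken as $h=f$, $g=-f$, so $\widetilde{\pi}$ sends it to $\pi(f)-\psi^{(1)}(\phi(f))=0$, and its vanishing forces $f\vert_{\overline Y}=0$. (Equivalently, the paper multiplies $(p_Y^0)_*(f)$ by a cutoff $g\in C_0(E_{\mathrm{rg}}^0\times\{0\})$ equal to $1$ on $\supp(f)\times\{0\}$ and compares $\widetilde{\pi}(g(p_Y^0)_*(f))=\psi^{(1)}(\phi(f))$ with $\widetilde{\pi}((p_Y^0)_*(f))=\pi(f)$.) The converse direction of the ``moreover'' clause as you sketch it is fine.
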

\begin{proof}
Fix $f \in C_0(E_Y^0)$. Lemma~\ref{decompose function in X amalg_partial Y Y} yields $g \in C_0(E_{\mathrm{rg}}^0),h \in C_0(E^0)$ such that $f(v,0)=g(v)+h(v)$ for all $v \in E^0$, and $f(v,1)=h(v)$ for all $v \in \overline{Y}$. For $g' \in C_0(E_{\mathrm{rg}}^0),h' \in C_0(E^0)$ such that $f(v,0)=g'(v)+h'(v)$ for all $v \in E^0$, and $f(v,1)=h'(v)$ for all $v \in \overline{Y}$, we have $h'-h=g-g' \in C_0(E_{\mathrm{rg}}^0 \setminus Y)$. So $\pi(h'-h)=\pi(g-g')=\psi^{(1)}(\phi(g-g'))$. Define $\widetilde{\pi}:C_0(E_Y^0) \to C^*(\psi,\pi)$ by $\widetilde{\pi}(f):=\pi_{\mathrm{rg}}(g)+\pi(h)$. It is straightforward to check that $\widetilde{\pi}$ is a homomorphism.

Now fix $f \in C_c\big(\big(N_{\alpha_0} \times \{0\}\big) \cup \big((N_{\alpha_0} \cap s^{-1}(\overline{Y})) \times \{1\}\big)\big)$. Lemma~\ref{decompose function in X amalg_partial Y Y} yields $g \in C_0(s^{-1}(E_{\mathrm{rg}}^0)),h \in C_0(E^1)$ such that $f(e,0)=g(e)+h(e)$ for all $e \in E^1$, and $f(e,1)=h(e)$ for all $e \in s^{-1}(\overline{Y})$. Since $K:=p_Y^1(\supp(f)) \subset N_{\alpha_0}$, the Urysohn's lemma gives $l \in C_0(N_{\alpha_0})$ such that $l(K)=1$. Then $f(e,0)=l(e)g(e)+l(e)h(e)$ for all $e \in E^1$, and $f(e,1)=l(e)h(e)$ for all $e \in s^{-1}(\overline{Y})$. Let $x:=(f^{\Ind_{\alpha_0}^{\alpha}})$, let $y:=((lg)^{\Ind_{\alpha_0}^{\alpha}})$, and let $z:=((lh)^{\Ind_{\alpha_0}^{\alpha}})$. Then $y \in X(E,\mathbf{N},\mathbf{S})_{C_0(E_{\mathrm{rg}}^0 )}, x_\alpha(e,0)=y_\alpha(e)+z_\alpha(e)$ for all $e \in \overline{N_\alpha}$, and $x_\alpha(e,1)=z_\alpha(e)$ for all $e \in \overline{N_\alpha \cap s^{-1}(\overline{Y})}$. Proposition~\ref{induced tuple generates C_c tuple} implies that for $x \in C_c(E_Y,\mathbf{N}_Y,\mathbf{S}_Y)$, there exist $y \in X(E,\mathbf{N},\mathbf{S})_{C_0(E_{\mathrm{rg}}^0 )} \cap C_c(E,\mathbf{N},\mathbf{S}),z \in C_c(E,\mathbf{N},\mathbf{S})$, such that $x_\alpha(e,0)=y_\alpha(e)+z_\alpha(e)$ for all $e \in \overline{N_\alpha}$, and $x_\alpha(e,1)=z_\alpha(e)$ for all $e \in \overline{N_\alpha \cap s^{-1}(\overline{Y})}$. By the similar argument in the previous paragraph, there is a bounded linear map $\widetilde{\psi}:X(E_Y,\mathbf{N}_Y,\mathbf{S}_Y) \to C^*(\psi,\pi)$ by $\widetilde{\psi}(x):=\psi_{\mathrm{rg}}(z)+\psi(z)$.

It is straightforward to check that $(\widetilde{\psi},\widetilde{\pi})$ is a Toeplitz representation of $X(E_Y,\mathbf{N}_Y,\mathbf{S}_Y)$ in $C^*(\psi,\pi)$.

Now we check Equality~(\ref{widetilde{T}^i circ (p_Y^i)^*=T^i}). For $f \in C_0(E^0)$, we have $(p_Y^0)_*(f)(v,0)=f(v)$ for all $v \in E^0$, and $(p_Y^0)_*(f)(v,1)=f(v)$ for all $v \in \overline{Y}$. So $\widetilde{\pi} \circ (p_Y^0)_*(f)=\pi(f)$ by definition of $\widetilde{\pi}$. Hence $\widetilde{\pi} \circ (p_Y^0)_*=\pi$. Similarly, $\widetilde{\psi} \circ (p_Y^1)_*=\psi$. Equality~(\ref{C^*(widetilde{psi},widetilde{pi})=C^*(psi,pi)}) follows easily from Equality~(\ref{widetilde{T}^i circ (p_Y^i)^*=T^i}).

Next we prove the covariance of $(\widetilde{\psi},\widetilde{\pi})$. By Proposition~\ref{phi^{-1}(K(X)) intersects ker(phi^{perp}) of X_{E,N,S}} and \cite[Lemma~3.3]{Katsura:IJM06}, $J_{X(E_Y,\mathbf{N}_Y \mathbf{S}_Y)}=C_0(E_{\mathrm{rg}}^0 \times \{0\})$. Fix a nonnegative function $f \in C_c(E_{\mathrm{rg}}^0 \times \{0\} )$. Lemma~\ref{decompose function in X amalg_partial Y Y} gives $g \in C_0(E_{\mathrm{rg}}^0), h \in C_0(E_{\mathrm{rg}}^0 \setminus Y)$ such that $f(v,0)=g(v)+h(v)$ for all $v \in E^0$. Then there exists a finite open cover $\{N_{\alpha_i} \}_{i=1}^{n}$ of $r^{-1}(\supp(g+h))$. We use a partition of unity to get a finite collection of functions $\{h_i\}_{i=1}^{n} \subset C(E^1,[0,1])$ such that $\supp(h_i) \subset N_{\alpha_i}$ for all $i$, and $\sum_{i=1}^{n}h_i=1$ on $r^{-1}(\supp(g+h))$. By Equation~(\ref{computation of phi(f) in the twisted graph correspondence}), we have 
\[
\phi(g+h)=\sum_{i=1}^{n}\Theta_{\sqrt{ h_i (g+h) \circ r}^{\mathrm{Ind}_{\alpha_i}^{*}},\sqrt{ h_i (g+h) \circ r}^{\mathrm{Ind}_{\alpha_i}^{*}}},
\]
and
\[
\phi(f)=\sum_{i=1}^{n}\Theta_{(p_Y^1)_*\Big(\sqrt{ (h_i(g+h) \circ r}^{\mathrm{Ind}_{\alpha_i}^{*}}\Big),(p_Y^1)_*\Big(\sqrt{h_i (g+h)\circ r}^{\mathrm{Ind}_{\alpha_i}^{*}}\Big)}.
\]
So
\begin{align*}
\widetilde{\psi}^{(1)}(\phi(f))&=\sum_{i=1}^{n}\psi\Big(\sqrt{ h_i (g+h) \circ r}^{\mathrm{Ind}_{\alpha_i}^{*}}\Big)\psi\Big(\sqrt{ h_i (g+h) \circ r}^{\mathrm{Ind}_{\alpha_i}^{*}}\Big)^*
\\&=\psi^{(1)}(\phi(g))+\psi^{(1)}(\phi(h))=\pi_{\mathrm{rg}}(g)+\pi(h)=\widetilde{\pi}(f).
\end{align*}
Hence $(\widetilde{\psi},\widetilde{\pi})$ is covariant.

We prove the last statement. Suppose that $\widetilde{\pi}$ is injective. For $f \in C_0(E^0)$, if $\pi(f)=0$ then by Equality~\ref{widetilde{T}^i circ (p_Y^i)^*=T^i} $(p_Y^0)_*(f)=0$. So $f=0$ and $\pi$ is injective. Fix $f \in C_c(E_\mathrm{rg}^0)$ such that $\psi^{(1)}(\phi(f))=\pi(f)$. By the Urysohn's lemma, there exists $g \in C_0(E_\mathrm{rg}^0 \times \{0\})$ such that $g(\supp(f)\times \{0\})=1$. Then $\widetilde{\pi}(g(p_Y^0)_*(f))=\psi^{(1)}(\phi(f))$, and $\widetilde{\pi}((p_Y^0)_*(f))=\pi(f)$. So $g(p_Y^0)_*(f)=(p_Y^0)_*(f)$ since $\widetilde{\pi}$ is injective. Hence $f(Y)=0$ and $C_0(E_{\mathrm{rg}}^0 \setminus Y)=\{f \in C_0(E_{\mathrm{rg}}^0 ): \pi(f)=\psi^{(1)}(\phi(f))\}$. Conversely, suppose that $\pi$ is injective and $C_0(E_{\mathrm{rg}}^0 \setminus Y)=\{f \in C_0(E_{\mathrm{rg}}^0 ): \pi(f)=\psi^{(1)}(\phi(f))\}$. Fix $f \in C_0(E_Y^0)$ such that $\widetilde{\pi}(f)=0$. Lemma~\ref{decompose function in X amalg_partial Y Y} yields $g \in C_0(E_{\mathrm{rg}}^0),h \in C_0(E^0)$ such that $f(v,0)=g(v)+h(v)$ for all $v \in E^0$, and $f(v,1)=h(v)$ for all $v \in \overline{Y}$. Then $\pi(h)=\psi^{(1)}(\phi(-g))$ because $\widetilde{\pi}(f)=0$. So $h \in C_0(E_{\mathrm{rg}}^0), \pi(h)=\psi^{(1)}(\phi(h))$. By assumption $h \in C_0(E_{\mathrm{rg}}^0 \setminus Y)$. Hence $f(v,1)=0$ for all $v \in \overline{Y}$. Since $\pi$ is injective, $\phi(g+h)=0$. We get $g+h=0$ and $f=0$. Therefore $\widetilde{\pi}$ is injective.
\end{proof}

\section{The Cuntz-Krieger Uniqueness Theorem}

In this section we prove a version of the Cuntz-Krieger uniqueness theorem for twisted topological graph algebras by following Katsura's idea in \cite{Katsura:TAMS04}. To begin with, we recall the notion of topological freeness from \cite{Katsura:TAMS04}.

\begin{defn}[{\cite[Definitions~5.3--5.5]{Katsura:TAMS04}}]
Let $E$ be a topological graph. For $n \geq 2$, a nonempty subset $S \subset E^n$ is \emph{non-returning} if $e_n \neq e_i'$ for all $i=1, \dots, n-1$, whenever $(e_1,\dots,e_n), (e_1',\dots,e_n') \in S$. For $n \geq 1$, a finite path $(e_1,\dots, e_n) \in E^n$ is a \emph{cycle} if $r(e_1)=s(e_n)$, and the vertices $r(e_1),\dots, r(e_n)$ are the \emph{base points} of the cycle. The cycle is \emph{without entrances} if $r^{-1}(r(e_i))=\{e_i\}$, for $i=1, \dots, n$. The graph $E$ is \emph{topologically free} if the set of base points of cycles without entrances has empty interior.
\end{defn}

\begin{thm}[The Cuntz-Krieger uniqueness theorem]\label{twisted version of Cuntz-Krieger Uniqueness Theorem}
Let $E$ be a topologically free topological graph, let $\mathbf{N}=\{N_\alpha\}_{\alpha \in \Lambda}$ be a cover of $E^1$ by precompact open $s$-sections, let $\mathbf{S}=\{s_{\alpha\beta}\}_{\alpha,\beta \in \Lambda}$ be a $1$-cocycle relative to $\mathbf{N}$, and let $(j_X,j_A)$ be the injective universal covariant Toeplitz representation of $X(E,\mathbf{N},\mathbf{S})$ in $\mathcal{O}(E,\mathbf{N},\mathbf{S})$. Fix an injective covariant Toeplitz representation $(\psi,\pi)$ of $X(E,\mathbf{N},\mathbf{S})$. Let $h:\mathcal{O}(E,\mathbf{N},\mathbf{S}) \to C^*(\psi,\pi)$ be the homomorphism such that $h \circ \psi=j_X, h \circ \pi=j_A$. Then $h$ is an isomorphism.
\end{thm}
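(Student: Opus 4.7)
\emph{Plan of proof.} Since $\psi(X(E,\mathbf{N},\mathbf{S}))$ and $\pi(C_0(E^0))$ generate $C^*(\psi,\pi)$, $h$ is surjective by the universal property of $\mathcal{O}(E,\mathbf{N},\mathbf{S})$, so only injectivity is at stake. The plan is to show $h$ is injective on the core $B_{[0,\infty]}$ first, and then to promote this to all of $\mathcal{O}(E,\mathbf{N},\mathbf{S})$ by a cutting-down argument modelled on the one in \cite[Theorem~5.12]{Katsura:TAMS04}.

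For the core, I would observe that both $(j_X,j_A)$ and $(\psi,\pi)$ are injective covariant Toeplitz representations, and so both yield the families of homomorphisms $\pi_k^n$ from Propositions~\ref{define pi_n^n on X(E,N,S)} and~\ref{define pi_{k}^{n} when 0 leq k <n}. Writing $\pi_k^{n,j}$ and $\pi_k^{n,\psi}$ for the two families attached to $(j_X,j_A)$ and $(\psi,\pi)$ respectively, the explicit formulas $\pi_0^n(j_A(f)+b)=f\vert_{E_{\mathrm{sg}}^0}$ and $\pi_k^n(b+c)=\omega\circ\pi_k^k(b)$, together with the inductive description of $\pi_n^n$, make the identity $\pi_k^{n,\psi}\circ h=\pi_k^{n,j}$ a direct check on generators. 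Hence $h(b)=0$ for $b\in B_{[0,n]}$ forces $\pi_k^{n,j}(b)=0$ for every $k$, and Corollary~\ref{direct sum of pi_{k}^{n} from 0 to n-1, direct sum of pi_{k}^{n} from 0 to n} then gives $b=0$. This proves injectivity of $h$ on every $B_{[0,n]}$, and hence on the core.

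To extend injectivity to all of $\mathcal{O}(E,\mathbf{N},\mathbf{S})$, I would follow Katsura's strategy. By~\eqref{simplification of C^*(psi,pi)} together with Proposition~\ref{X(E,N,S)^{otimes n} isomorphic with X(E_n,N^n,S^n)}, an arbitrary element of $\mathcal{O}(E,\mathbf{N},\mathbf{S})$ is approximated by a finite sum
\[
a=a_0+\sum_{n\ne m} j_X^{\otimes n}(\xi_{n,m})j_X^{\otimes m}(\eta_{n,m})^*
\]
with $a_0$ in the core. Given $\epsilon>0$ and a length bound $N$ for the sum, topological freeness of $E$ produces a vertex $v\in E^0$ that is not a base point of any cycle of length at most $N$ without entrances; a nonnegative $f\in C_c(E^0)$ peaked at $v$ with $\Vert f\Vert=1$ should then satisfy, by the same estimates as in the untwisted case, that $j_A(f)$-conjugation drives every off-diagonal term below $\epsilon$ in norm while $\Vert j_A(f)a_0 j_A(f)\Vert$ stays close to $\Vert a_0\Vert$. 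Combined with injectivity on the core and with $h\circ j_A=\pi$ being isometric, the hypothesis $h(a)=0$ would then force $\Vert a\Vert<C\epsilon$ for arbitrary $\epsilon$, hence $a=0$.

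The hard part is the cutting-down step in the presence of the twist. The cocycle enters each off-diagonal product only through inner products of the shape $\langle\xi,\eta\rangle_{C_0(E^0)}\times(s_{\alpha\beta}\vert_{N_{\alpha\beta}}\circ s\vert_{N_{\alpha\beta}}^{-1})$ coming out of Proposition~\ref{induced tuple generates C_c tuple}, all of which restrict to genuine functions on $E^0$. One can therefore hope to trivialize $\mathbf{S}$ on a small precompact $s$-section containing $v$ and reduce the norm estimate to Katsura's untwisted computation. Verifying rigorously that this local trivialization preserves the two opposing estimates -- killing off-diagonal terms while retaining a point evaluation of $a_0$ at $v$ -- without the cocycle producing spurious matchings between paths at $v$ is, I expect, the technical heart of the proof.
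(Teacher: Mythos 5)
Your overall strategy is the paper's: reduce injectivity of $h$ to the estimate that the diagonal part $x_0=\sum_{n_i=m_i}\psi_{n_i}(\xi_i)\psi_{m_i}(\eta_i)^*$ of a finite sum $x=\sum_i\psi_{n_i}(\xi_i)\psi_{m_i}(\eta_i)^*$ satisfies $\Vert x_0\Vert\leq\Vert x\Vert$, get injectivity on the core from the maps $\pi_k^n$ and Corollary~\ref{direct sum of pi_{k}^{n} from 0 to n-1, direct sum of pi_{k}^{n} from 0 to n}, and then run the argument of \cite[Theorem~5.12]{Katsura:TAMS04}. The core step is fine. The genuine gap is in the cutting-down step. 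Conjugating by $j_A(f)$ (or $\pi(f)$) for a function $f$ on $E^0$ peaked at a vertex cannot drive the off-diagonal terms below $\epsilon$: one has $\pi(f)\psi_{n_i}(\xi_i)\psi_{m_i}(\eta_i)^*\pi(f)=\psi_{n_i}(f\cdot\xi_i)\psi_{m_i}(f^*\cdot\eta_i)^*$, which retains essentially full norm whenever $r^{n_i}(\supp[\xi_i\vert\xi_i])$ and $r^{m_i}(\supp[\eta_i\vert\eta_i])$ both meet the peak of $f$ --- already visible for a discrete graph, where $p_vs_\mu s_\nu^*p_v=s_\mu s_\nu^*$ if $r(\mu)=r(\nu)=v$, regardless of $|\mu|\neq|\nu|$. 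What actually kills the terms with $n_i\neq m_i$ is compression by $\psi_m(y)$ for $y=(h^{\mathrm{Ind}_{\alpha_1,\dots,\alpha_m}^{\beta_1,\dots,\beta_m}})$ supported on a \emph{non-returning} precompact open $s^m$-section $O$ with $m$ exceeding all the $n_i,m_i$; the existence of such an $O$ near a point where the core part is large is precisely what topological freeness supplies via \cite[Lemmas~5.6, 5.9]{Katsura:TAMS04}, and non-returning is what forces $\psi_m(y)^*\psi_{n_i}(\xi_i)\psi_{m_i}(\eta_i)^*\psi_m(y)=0$ for $n_i\neq m_i$. Your sketch also omits the other half of the construction: to produce $a,b$ and $f$ with $\Vert a^*xb-\pi(f)\Vert<\epsilon$ and $\Vert f\Vert=\Vert x_0\Vert$, one must first locate the level $k$ at which $\Vert\pi_k^n(x_0)\Vert=\Vert x_0\Vert$, compress by $\psi_k(\widetilde{\xi})^*(\cdot)\psi_k(\widetilde{\eta})$ to turn the level-$k$ part into a function $g$ on $E^0$, and, when $k<n$ (so the norm is attained over $E_{\mathrm{sg}}^0$), extend $\xi,\eta$ from $E_{\mathrm{sg}}^k$ to $E^k$ by Proposition~\ref{Katsura's Ext Thm} and split into the source and infinite-receiver subcases. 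None of this is recovered by a single vertex function.

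By contrast, the issue you single out as the technical heart --- trivializing the cocycle near $v$ --- does not arise. All the cutting elements are of the form $(h^{\mathrm{Ind}_{\alpha_1,\dots,\alpha_M}^{\beta_1,\dots,\beta_M}})$ for $h$ supported in a single chart, the relevant inner products are honest functions on $E^0$ (as in the display preceding Proposition~\ref{induced tuple generates C_c tuple} and in Proposition~\ref{existence of an inj uni cov twisted Toep rep}), and Katsura's untwisted norm estimates then go through verbatim; no local trivialization of $\mathbf{S}$ is needed. The missing ingredient is not the twist but the correct choice of compressing elements and the case analysis at singular vertices.
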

\begin{proof}
Fix $L \geq 1$, $n_i, m_i \geq 1, \xi_i \in X(E_{n_i}, \mathbf{N}^{n_i}, \mathbf{S}^{n_i})$, and $\eta_i \in X(E_{m_i}, \mathbf{N}^{m_i}, \mathbf{S}^{m_i}), i=1,\dots, L$. Suppose that $\xi_i=\xi_{i1} \diamond\dots\diamond \xi_{in_i}$, where $\xi_{i1}, \dots, \xi_{in_i} \in C_c(E,\mathbf{N},\mathbf{S})$  whenever $n_i \ge 1$; and similarly for the $\eta_i$. Suppose that if $n=0$ then $\xi_i \in C_c(E^0)$; and if $m=0$ then $\eta_i \in C_c(E^0)$. Let $x:=\sum_{i=1}^{L}\psi_{n_i}(\xi_i)\psi_{m_i}(\eta_i)^*$, and let $x_0:=\sum_{n_i=m_i}\psi_{n_i}(\xi_i)\psi_{m_i}(\eta_i)^*$. An argument like that in the proof of \cite[Theorem~5.12]{Katsura:TAMS04} shows that we only need to show $\Vert x_0 \Vert \leq \Vert x \Vert$.

Let $n:=\max\{n_i,m_i:1\leq i \leq L\}$. If $n=0$ then $x_0=x$ which implies that $\Vert x_0 \Vert \leq \Vert x \Vert$ automatically. We now assume that $n \geq 1$ and $x_0 \neq 0$. To prove that $\Vert x_0 \Vert \leq \Vert x \Vert$, it is enough to verify that for any $\epsilon >0$, there exist $a, b \in C^*(\psi,\pi)$ and $f \in C_0(E^0)$, such that $\Vert a \Vert, \Vert b \Vert \leq 1, \Vert f \Vert=\Vert x_0 \Vert$, and $\Vert a^*xb-\pi(f)\Vert<\epsilon$. So we fix $\epsilon>0$ with $\epsilon <\Vert x_0 \Vert$. 

Propositions~\ref{define pi_n^n on X(E,N,S)}, \ref{define pi_{k}^{n} when 0 leq k <n} yield a homomorphism 
\[
\bigoplus_{k=0}^{n}\pi_k^n:B_{[0,n]} \to C_0(E_{\mathrm{sg}}^0) \oplus \Big(\bigoplus_{k=1}^{n-1}\mathcal{L}(X(E_{\mathrm{sg}}^k, (\mathbf{N}^k)^{E_{\mathrm{sg}}^k}, (\mathbf{S}^k)^{E_{\mathrm{sg}}^k}))\Big) \oplus \mathcal{L}(X(E_n,\mathbf{N}^n,\mathbf{S}^n))
\]
By Corollary~\ref{direct sum of pi_{k}^{n} from 0 to n-1, direct sum of pi_{k}^{n} from 0 to n}, $\bigoplus_{k=0}^{n}\pi_k^n$ is injective, so there exists $0\leq k \leq n$, such that $\Vert \pi_{k}^{n}(x_0) \Vert=\Vert x_0\Vert$. We consider three cases: $k = 0$, $1 \leq k \leq n-1$, and $k = n$.

Case $1$: Suppose that $1 \leq k \leq n-1$. Take $\xi, \eta \in C_c(E_{\mathrm{sg}}^k, (\mathbf{N}^k)^{E_{\mathrm{sg}}^k}, (\mathbf{S}^k)^{E_{\mathrm{sg}}^k})$ with $\Vert \xi \Vert_{C_0(E_{\mathrm{sg}}^0)}=\Vert \eta \Vert_{C_0(E_{\mathrm{sg}}^0)}=1$, such that $\Vert x_0\Vert-\epsilon /2 < \Vert\langle \xi,\pi_{k}^{n}(x_0)\eta \rangle_{C_0(E_{\mathrm{sg}}^0)}\Vert\leq\Vert x_0 \Vert$. By Proposition~\ref{Katsura's Ext Thm}, there exist $\widetilde{\xi}, \widetilde{\eta} \in C_c(E_k,\mathbf{N}^k,\mathbf{S}^k)$ with $\Vert \widetilde{\xi}\Vert_{C_0(E^0)}=\Vert \widetilde{\eta}\Vert_{C_0(E^0)}=1$, such that $\widetilde{\xi}_{\alpha_1,\dots,\alpha_k}\vert_{\overline{N_{\alpha_1}\times\dots\times N_{\alpha_k} \cap E_{\mathrm{sg}}^k}}=\xi_{\alpha_1,\dots,\alpha_k}$ and $\widetilde{\eta}_{\alpha_1,\dots,\alpha_k}\vert_{\overline{N_{\alpha_1}\times\dots\times N_{\alpha_k} \cap E_{\mathrm{sg}}^k}}=\eta_{\alpha_1,\dots,\alpha_k}$. By Proposition~\ref{define pi_{k}^{n} when 0 leq k <n},
\begin{align*}
\langle \xi,\pi_{k}^{n}(x_0)\eta \rangle_{C_0(E_{\mathrm{sg}}^0)}&=\Big\langle \xi,\omega\circ \pi_k^k\Big(\sum_{n_i=m_i \leq k}\psi_{n_i}(\xi_i)\psi_{m_i}(\eta_i)^*\Big)(\eta)\Big\rangle_{C_0(E_{\mathrm{sg}}^0)}
\\&=\Big\langle \widetilde{\xi},\pi_k^k\Big(\sum_{n_i=m_i \leq k}\psi_{n_i}(\xi_i)\psi_{m_i}(\eta_i)^*\Big)(\widetilde{\eta})\Big\rangle_{C_0(E^0)}\Big\vert_{E_{\mathrm{sg}}^0}.
\end{align*}
Define $g:=\Big\langle \widetilde{\xi},\pi_k^k\Big(\sum_{n_i=m_i \leq k}\psi_{n_i}(\xi_i)\psi_{m_i}(\eta_i)^*\Big)(\widetilde{\eta})\Big\rangle_{C_0(E^0)}$. By Proposition~\ref{define pi_n^n on X(E,N,S)}, we have
\begin{align*}
\pi(g)&=\psi_k(\widetilde{\xi})^*\psi_k\Big(\pi_k^k\Big(\sum_{n_i=m_i \leq k}\psi_{n_i}(\xi_i)\psi_{m_i}(\eta_i)^*\Big)(\widetilde{\eta})\Big)
\\&=\sum_{n_i=m_i\leq k}\psi_k(\widetilde{\xi})^*\psi_{n_i}(\xi_i)\psi_{m_i}(\eta_i)^*\psi_k(\widetilde{\eta}). 
\end{align*}
Hence $\Vert x_0 \Vert-\epsilon /2< \Vert g \vert_{E_{\mathrm{sg}}^0}\Vert  \leq \Vert x_0 \Vert$. Therefore there exists $v \in E_{\mathrm{sg}}^0$, such that $\Vert x_0 \Vert-\epsilon /2< \vert g(v)\vert \leq \Vert x_0 \Vert$. By definition of $E_{\mathrm{sg}}^0$ we now split into two subcases.

Subcase $1.1$: Suppose that $v \in \overline{E^0 \setminus \overline{r(E^1)}}$. By continuity of $g$, there exists $v' \in E^0 \setminus \overline{r(E^1)}$, such that $\Vert x_0 \Vert-\epsilon /2< \vert g(v')\vert \leq \Vert x_0 \Vert$. By the Urysohn's lemma, there exists $h \in C_0(E^0,[0,1])$, such that $h(\overline{r(E^1)})=0, h(v')=1$. Let $a:=\psi_k(\widetilde{\xi})\pi(h)$, let $b:=\psi_k(\widetilde{\eta})\pi(h)$, and let $f:=(\Vert x_0 \Vert / \Vert hgh \Vert) hgh$. If $n_i=m_i >k$ or $n_i \neq m_i$, we deduce that $a^*\psi_{n_i}(\xi_i)\psi_{m_i}(\eta_i)^*b=0$ because $h \cdot y = 0$ for all $y \in X(E,\mathbf{N},\mathbf{S})$. So
\begin{align*}
\Vert a^*xb-\pi(f) \Vert=\Vert \pi(h)\pi(g)\pi(h)-\pi(f)\Vert<\epsilon.
\end{align*}

Subcase $1.2$: Suppose that $v \in E^0 \setminus E_{\mathrm{fin}}^0$. By continuity of $g$, there exists an open neighborhood $V$ of $v$, such that $\vert g(w)-g(w')\vert<\epsilon /2$ for all $w, w' \in V$. Define a compact subset of $E^1$
\begin{align*}
K:&=\Big( \bigcup_{n_i \geq 1}\{e_1,\dots,e_{n_i}: (e_1,\dots,e_{n_i}) \in \supp([\xi_i \vert \xi_i]) \}\Big) \bigcup 
\\&\ \ \ \  \Big( \bigcup_{m_i \geq 1}\{e_1,\dots,e_{m_i}: (e_1,\dots,e_{m_i}) \in \supp([\eta_i \vert \eta_i]) \}\Big) \bigcup
\\&\ \ \ \ \{e_1,\dots,e_k:(e_1,\dots,e_k) \in \supp([\widetilde{\xi}\vert\widetilde{\xi}])\cup \supp([\widetilde{\eta}\vert\widetilde{\eta}])\}.
\end{align*}
By definition of $E_{\mathrm{fin}}^0$, we have $r^{-1}(V) \not\subset K$. Take $e \in N_{\alpha_0} \cap r^{-1}(V) \setminus K$. By the Urysohn's lemma there exists $h \in C_0(N_{\alpha_0},[0,1])$, such that $h(e)=1$, and $h(K \cup r^{-1}(V^c))=0$. Define $y:=(h^{\mathrm{Ind}_{\alpha_0}^{\alpha}})_{\alpha \in \Lambda}$. Let $a:=\psi_k(\widetilde{\xi})\psi(y)$, let $b:=\psi_k(\widetilde{\eta})\psi(y)$, and let $f:=(\Vert x_0 \Vert/ \Vert \langle y,g \cdot y \rangle_{C_0(E^0)}\Vert)\langle y,g \cdot y \rangle_{C_0(E^0)}$. Since $\langle y,z \rangle_{C_0(E^0)}=0$ for all $z \in C_c(E,\mathbf{N},\mathbf{S})$ satisfying $\supp([z \vert z]) \subset K$, if $n_i=m_i >k$ or $n_i \neq m_i$, then $a^*\psi_{n_i}(\xi_i)\psi_{m_i}(\eta_i)^*b=0$. So
\begin{align*}
\Vert a^*xb-\pi(f) \Vert=\Vert \psi(y)^*\pi(g)\psi(y)-\pi(f)\Vert=\Vert \langle y,g \cdot y \rangle_{C_0(E^0)}-f \Vert<\epsilon.
\end{align*}
This completes the case for $1 \leq k \leq n-1$.

Case $2$: Suppose that $k=0$. The argument for this case is similar to the Case $1$.

Case $3$: Suppose that $k=n$. Take $\xi, \eta \in C_c(E_n, \mathbf{N}^n,\mathbf{S}^n)$ with $\Vert \xi \Vert_{C_0(E_0)}=\Vert \eta \Vert_{C_0(E^0)}=1$, such that $\Vert x_0\Vert-\epsilon /2 < \Vert\langle \xi,\pi_{n}^{n}(x_0)\eta \rangle_{C_0(E_0)}\Vert\leq\Vert x_0 \Vert$. Define $g:=\langle \xi,\pi_{n}^{n}(x_0)\eta \rangle_{C_0(E_0)}$. By Proposition~\ref{define pi_n^n on X(E,N,S)}, $\psi_n(\widetilde{\xi})^*x_0 \psi_n(\widetilde{\eta})=\pi(g)$. Since $\Vert x_0\Vert-\epsilon /2 < \Vert g \Vert \leq\Vert x_0 \Vert$, by continuity of $g$, there exist $v \in E^0$ and an open neighborhood $V$ of $v$, such that $\Vert x_0\Vert-\epsilon /2 < \vert g(w) \vert \leq\Vert x_0 \Vert$, for all $w \in V$. 

Subcase $3.1$: Suppose that there exists $1 \leq M \leq n$ such that $(r^M)^{-1}(V) \neq \emptyset$ and $(r^{M+1})^{-1}(V) = \emptyset$. Take $z \in (r^M)^{-1}(V) \cap (N_{\alpha_1} \times \dots \times N_{\alpha_M})=:O$. By the Urysohn's lemma there exists $h \in C_c(O,[0,1])$ such that $h(z)=1$. Let $y:=(h^{\mathrm{Ind}_{\alpha_1,\dots,\alpha_M}^{\beta_1,\dots,\beta_M}})_{\beta_1,\dots,\beta_M \in \Lambda}$. Now we take $a:=\psi_n(\widetilde{\xi})\psi_M(y), b:=\psi_n(\widetilde{\eta})\psi_M(y)$, and $f:=(\Vert x_0 \Vert / \Vert \langle y,g \cdot y\rangle_{C_0(E^0)} \Vert)\langle y,g \cdot y\rangle_{C_0(E^0)}$. If $n_i \neq m_i$, suppose without loss of generality that $n_i >m_i$, for $\zeta_1, \dots, \zeta_{M+1} \in C_c(E,\mathbf{N},\mathbf{S}), e \in E^1$, and $(e_1,\dots,e_M) \in E^M$ with $s^M(e_M)=r(e)$. Since $(r^{M+1})^{-1}(U)=\emptyset$, it is not possible that $(e_1,\dots,e_M) \in O$. So $\psi_M(y)^* \psi_M(\gamma_1 \diamond\dots\diamond \gamma_M)\psi(\gamma_{M+1})=0$. 
Hence if $n_i \neq m_i$ then $a^* \psi_{n_i}(\xi_i)\psi_{m_i}(\eta_i)^* b=0$. Therefore
\begin{align*}
\Vert a^*xb-\pi(f) \Vert&=\Vert \psi_M(y)^*\psi_n(\widetilde{\xi})^*x_0\psi_n(\widetilde{\eta})\psi_M(y)-\pi(f) \Vert=\Vert \langle y,g \cdot y \rangle_{C_0(E^0)}-f \Vert<\epsilon.
\end{align*}

Subcase $3.2$: Suppose that $r^{-1}(V)=\emptyset$. The argument for this subcase is similar to Subcase 3.1.

Subcase $3.3$: Suppose that $(r^{n+1})^{-1}(V) \neq \emptyset$. Since $E$ is topologically free, by \cite[Lemmas~5.9, 5.6]{Katsura:TAMS04} there exist $m \geq n+1$ and a nonempty non-returning precompact open $s^m$-section $O \subset (r^m)^{-1}(V) \cap (N_{\alpha_1} \times\dots\times N_{\alpha_m})$. Fix $z \in O$. By the Urysohn's lemma there exists $h \in C_c(O,[0,1])$ such that $h(z)=1$. Define $y:=(h^{\mathrm{Ind}_{\alpha_1,\dots,\alpha_M}^{\beta_1,\dots,\beta_M}})_{\beta_1,\dots,\beta_M \in \Lambda}$. Let $a:=\psi_n(\widetilde{\xi})\psi_m(y), b:=\psi_n(\widetilde{\eta})\psi_m(y)$, and $f:=(\Vert x_0 \Vert / \Vert \langle y,g \cdot y\rangle_{C_0(E^0)} \Vert)\langle y,g \cdot y\rangle_{C_0(E^0)}$. If $n_i \neq m_i$, suppose without loss of generality that $n_i >m_i$. For $\zeta_1, \dots, \zeta_{n_i-m_i}, \gamma_1,\dots,\gamma_m \in C_c(E,\mathbf{N},\mathbf{S})$ such that $[\gamma_1 \diamond\dots\diamond \gamma_m \vert \gamma_1 \diamond\dots\diamond \gamma_m] \in C_0(O)$, we have
\begin{align*}
\psi_m(y)^* &\psi_{n_i-m_i}(\zeta_1 \diamond\dots\diamond \zeta_{n_i-m_i})\psi_m(\gamma_1 \diamond\dots\diamond \gamma_m)
\\&=\psi(\langle y, \zeta_1 \diamond\dots\diamond \zeta_{n_i-m_i} \diamond \gamma_1 \diamond\dots\diamond \gamma_{m+m_i-n_i}\rangle_{C_0(E^0)} \cdot \gamma_{m+m_i-n_i+1} \diamond\dots\diamond \gamma_m). 
\end{align*}
For $(e_{m+m_i-n_i+1},\dots,e_m) \in E^{n_i-m_i}$ and $(e_1',\dots,e_m') \in E^m$ with $s(e_m')=r(e_{m+m_i-n_i+1})$, we notice that $(e_1',\dots,e_m')$, and $(e_{n_i-m_i+1}',\dots,e_m', e_{m+m_i-n_i+1},\dots,e_m)$ can not lie in $O$ at the same time because $O$ is non-returning. So for $n_i \neq m_i$ we have $a^* \psi_{n_i}(\xi_i)\psi_{m_i}(\eta_i)^* b=0$. Therefore
\[
\Vert a^*xb-\pi(f) \Vert=\Vert \psi_m(y)^*\pi(g)\psi_m(y)-\pi(f)\Vert=\Vert \langle y,g\cdot y\rangle_{C_0(E^0)}-f \Vert<\epsilon.   \qedhere
\]
\end{proof}

\section{The Gauge-invariant Ideal Structure}

In this section we investigate the gauge-invariant ideal structure of the twisted topological graph algebra of a topological graph. We adopt the approach developed by Katsura in \cite{Katsura:ETDS06}.

Throughout the section, we fix a topological graph $E$, a cover $\mathbf{N}=\{N_\alpha\}_{\alpha \in \Lambda}$ of $E^1$ by precompact open $s$-sections, and a $1$-cocycle $\mathbf{S}=\{s_{\alpha\beta}\}_{\alpha, \beta \in \Lambda}$ relative to $\mathbf{N}$. Let $(\psi,\pi)$ be the injective universal covariant Toeplitz representation of $X(E,\mathbf{N},\mathbf{S})$ in the twisted topological graph algebra $\mathcal{O}(E,\mathbf{N},\mathbf{S})$. Let $\gamma:\mathbf{T} \to \mathrm{Aut}(\mathcal{O}(E,\mathbf{N},\mathbf{S}))$ be the gauge action. Then 
\[
B_{[0,\infty]}=\mathcal{O}(E,\mathbf{N},\mathbf{S})^\gamma:=\{a \in \mathcal{O}(E,\mathbf{N},\mathbf{S}): \gamma_z(a)=a, \text{ for all } z \in \mathbf{T}\}.
\]
Let $\Gamma:\mathcal{O}(E,\mathbf{N},\mathbf{S}) \to \mathcal{O}(E,\mathbf{N},\mathbf{S})^\gamma$ be the expectation induced from the gauge action.

\begin{defn}[{\cite[Definitions~2.1, 2.3]{Katsura:ETDS06}}]
Let $F^0$ be a closed subset of $E^0$, and let $F^1:=s^{-1}(F^0)$. Then $F^0$ is called \emph{invariant} if the quadruple $F:=(F^0,F^1,r \vert_{F^1},s \vert_{F^1})$ is a topological graph, and for $v \in E_{\mathrm{rg}}^0 \cap F^0$, we have $r^{-1}(v) \cap F^1 \neq \emptyset$. A pair $\rho=(F^0,Z)$ is an \emph{admissible pair} if $F^0$ is a closed invariant subset of $E^0, Z$ is closed in $E^0,$ and $F_{\mathrm{sg}}^0 \subset Z \subset E_{\mathrm{sg}}^0 \cap F^0$.
\end{defn}

Firstly, we aim to define a map from the set of all admissible pairs of $E$ to the set of all closed two-sided ideals of $\mathcal{O}(E,\mathbf{N},\mathbf{S})$. 

The following definition is a generalization of \cite[Definition~3.1]{Katsura:ETDS06}.

\begin{defn}\label{define J_rho I(rho)}
Let $\rho=(F^0,Z)$ be an admissible pair and let $\omega$ be the homomorphism of Proposition~\ref{omega is a homomorphism} from $\mathcal{L}(X(E,\mathbf{N},\mathbf{S}))$ to $\mathcal{L}(X(F^1,\mathbf{N}^{F^1},\mathbf{S}^{F^1}))$. Define
\begin{align*}
J_\rho:=\{\pi(f)+\psi^{(1)}(K): f(Z)=0, \mathrm{and} \ \omega(\phi(f)+K)=0 \},
\end{align*}
and define $I(\rho)$ to be the closed two-sided ideal in $\mathcal{O}(E,\mathbf{N},\mathbf{S})$ generated by $J_\rho$.
\end{defn}

The following proposition is a generalization of \cite[Proposition~3.5]{Katsura:ETDS06}.

\begin{prop}\label{simplification of I_rho}
Let $\rho=(F^0,Z)$ be an admissible pair. Then $I(\rho)$ is gauge-invariant, and 
\[
I(\rho)=\overline{\lsp}\{\psi_n(\xi)a \psi_m(\eta)^*: \xi \in C_c(E_n,\mathbf{N}^n,\mathbf{S}^n), \eta \in C_c(E_m,\mathbf{N}^m,\mathbf{S}^m),a \in J_\rho \}.
\]
\end{prop}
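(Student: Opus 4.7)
The plan is to address the two assertions in turn. Gauge-invariance is straightforward: each generator $\pi(f) + \psi^{(1)}(K) \in J_\rho$ is fixed by every $\gamma_z$ because $\gamma_z\circ\pi = \pi$ and $\gamma_z(\psi(x)\psi(y)^*) = z\bar z\,\psi(x)\psi(y)^* = \psi^{(1)}(\Theta_{x,y})$. Hence $J_\rho$ lies in the gauge fixed-point algebra, and the ideal $I(\rho)$ it generates is gauge-invariant.

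For the characterization, let $S$ denote the closed linear span on the right-hand side. The inclusion $S \subseteq I(\rho)$ is immediate. For the reverse my plan is to exhibit $S$ as a closed two-sided ideal containing $J_\rho$. Given $a \in J_\rho$ and a self-adjoint approximate identity $(e_\mu) \subseteq C_c(E^0)$, the elements $\pi(e_\mu)a\pi(e_\mu) = \psi_0(e_\mu)a\psi_0(e_\mu)^*$ are generators of $S$ converging in norm to $a$ because $\pi$ is nondegenerate in $\mathcal{O}(E,\mathbf{N},\mathbf{S})$; so $J_\rho \subseteq S$. To show $S$ is a two-sided ideal, I would first verify that $J_\rho$ is itself a $*$-closed two-sided ideal of $B_{[0,1]}$: the direct product computation combined with the multiplicativity of $\omega$ (Proposition~\ref{omega is a homomorphism}) gives $\omega((\phi(g)+L)(\phi(f)+K)) = \omega(\phi(g)+L)\omega(\phi(f)+K) = 0$ whenever $\pi(f)+\psi^{(1)}(K) \in J_\rho$, so in particular $S$ is $*$-closed. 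By Equation~(2.1) combined with Proposition~\ref{X(E,N,S)^{otimes n} isomorphic with X(E_n,N^n,S^n)}, it then suffices to check that $\psi_p(\alpha)\psi_q(\beta)^* \cdot \psi_n(\xi)a\psi_m(\eta)^* \in S$ for all admissible data and $a \in J_\rho$. Reducing $\psi_q(\beta)^*\psi_n(\xi)$ via Toeplitz relations splits into three cases: for $q < n$ the product collapses to a pure $\psi_{n-q}$-word that immediately places the full expression in $S$, while for $q = n$ the product is $\pi(\langle\beta,\xi\rangle)$ and $\pi(\langle\beta,\xi\rangle)a \in J_\rho$ by the ideal property, again placing the expression in $S$.

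The remaining case $q > n$ leaves $\psi_p(\alpha)\psi_l(\beta')^* a \psi_m(\eta)^*$ with $l = q - n \geq 1$, a $\psi^*$-word sitting immediately to the left of $a$; this is the main obstacle. My plan is to apply the identity $\psi_l(\beta')^* a = \psi_l(\widetilde\beta)^*$ (the adjoint of Proposition~\ref{define pi_n^n on X(E,N,S)}), with $\widetilde\beta_1 = \pi_1^1(a^*)\beta'_1$ and $\widetilde\beta_i = \beta'_i$ for $i \geq 2$, and then exploit that $\pi_1^1(a^*) \in \ker\omega$ because $\omega(\pi_1^1(a)) = \omega(\phi(f)+K) = 0$. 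By Proposition~\ref{omega is a homomorphism} this forces $\widetilde\beta_1 \in X(E,\mathbf{N},\mathbf{S})_{C_0(E^0 \setminus F^0)}$, hence $\widetilde\beta_1 = \lim_\nu y_\nu \cdot g_\nu$ with $g_\nu \in C_0(E^0 \setminus F^0)$. Because $Z \subseteq F^0$ and $r(F^1) \subseteq F^0$ (the latter by invariance of $F^0$), each $\pi(g_\nu)$ itself lies in $J_\rho$. Substituting $\psi(\widetilde\beta_1)^* = \lim_\nu \pi(g_\nu)^*\psi(y_\nu)^*$ and absorbing the $\psi(y_\nu)^*$ into $\psi_m(\eta)^*$ on its right rewrites the original product as a norm-limit of expressions of the same form, but with $l$ replaced by $l-1$ and a new $J_\rho$-element inserted. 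Iterating this reduction $l$ times collapses the leading $\psi^*$-word and yields generators of $S$, showing $S$ is a two-sided ideal and completing the proof.
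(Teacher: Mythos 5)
Your proposal is correct and follows essentially the same route as the paper: the crux in both arguments is that for $a=\pi(f)+\psi^{(1)}(K)\in J_\rho$ one has $(\phi(f)+K)x\in X(E,\mathbf{N},\mathbf{S})_{C_0(E^0\setminus F^0)}$, which via Cohen factorization, $Z\subset F^0$, and the invariance of $F^0$ produces a new element of $J_\rho$ on the other side of a $\psi$-factor (you invoke this in adjoint form through $\pi_l^l$ and $\ker\omega$). The paper packages the same mechanism as $a\,\psi_m(\xi)\in\lsp\{\psi_m(\eta)b:b\in J_\rho\}$ together with the symmetric statement and an approximation argument, whereas you verify directly that the closed span is a $*$-closed two-sided ideal containing $J_\rho$; the difference is organizational rather than mathematical.
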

\begin{proof}
Since the core of $\mathcal{O}(E,\mathbf{N},\mathbf{S})$ coincides with the fixed point algebra $\mathcal{O}(E,\mathbf{N},\mathbf{S})^\gamma, \gamma$ fixes $J_\rho$. So $I(\rho)$ is gauge-invariant.

The set inclusion $\supset$ is obvious. We prove the other direction. For $\pi(f)+\psi^{(1)}(K) \in J_\rho$ and $g \in C_0(E^0)$, we have $(\pi(f)+\psi^{(1)}(K))\pi(g)=\pi(fg)+\psi^{(1)}(K\phi(g)), (fg)(Z)=0$, and $\omega(\phi(fg)+K\phi(g))=\omega(\phi(f)+K)\omega(\phi(g))=0$. For $\pi(f)+\psi^{(1)}(K) \in J_\rho$ and $x \in C_c(E,\mathbf{N},\mathbf{S})$, we have $(\pi(f)+\psi^{(1)}(K))\psi(x)=\psi((\phi(f)+K)x)$. By Proposition~\ref{omega is a homomorphism}, $(\phi(f)+K)x \in X(E,\mathbf{N},\mathbf{S})_{C_0(E^0 \setminus F^0)}$. The Cohen factorization theorem shows that $(\phi(f)+K)x=y \cdot g$ for some $y \in X(E,\mathbf{N},\mathbf{S}), g \in C_0(E^0 \setminus F^0)$. We have $g(Z)=0$ because $Z \subset F^0$. By Proposition~\ref{omega is a homomorphism} $\omega(g)=0$ since $F^0$ is invariant and $\langle g \cdot z, g \cdot z \rangle_{C_0(E^0)}(F^0)=0$ for all $z \in C_c(E,\mathbf{N},\mathbf{S})$. So $\pi(g) \in J_\rho$. Inductively, we deduce that for $a \in J_\rho$, and for $\xi \in C_c(E_m,\mathbf{N}^m,\mathbf{S}^m), a\psi_m(\xi)\in\lsp\{\psi_m(\eta)b:\eta \in C_c(E_m,\mathbf{N}^m,\mathbf{S}^m), b \in J_\rho\}$. A symmetric argument gives $\psi_m(\xi)^*a\in\lsp\{b\psi_m(\eta)^*:\eta \in C_c(E_m,\mathbf{N}^m,\mathbf{S}^m),b \in J_\rho\}$. A straightforward approximation argument then yields the required result.
\end{proof}

Secondly, we want to construct a map from the set of all closed two-sided ideals of $\mathcal{O}(E,\mathbf{N},\mathbf{S})$ to the set of all admissible pairs of $E$.

The following definition is a generalization of \cite[Definition~2.4]{Katsura:ETDS06}.

\begin{defn}\label{defined rho(I)}
Let $I$ be a closed two-sided ideal of $\mathcal{O}(E,\mathbf{N},\mathbf{S})$. Let $F_I^0, Z_I$ be the closed subsets of $E^0$ such that $\pi^{-1}(I)=C_0(E^0 \setminus F_I^0)$ and $\pi^{-1}(I+B_1)=C_0(E^0 \setminus Z_I)$. Define $\rho(I):=(F_I^0,Z_I)$, and define $F_I^1:=s^{-1}(F_I^0)$.
\end{defn}

The following lemma is a generalization of \cite[Lemma~2.6]{Katsura:ETDS06}.

\begin{lemma}\label{criteria to check whether element in I}
Let $I$ be a closed two-sided ideal of $\mathcal{O}(E,\mathbf{N},\mathbf{S})$. For $x \in X(E,\mathbf{N},\mathbf{S})$, we have $\psi(x) \in I$ if and only if $x \in X(E,\mathbf{N},\mathbf{S})_{C_0(E^0 \setminus F_I^0)}$. For $K \in \mathcal{K}(X(E,\mathbf{N},\mathbf{S})), \psi^{(1)}(K) \in I$ if and only if $\psi(Kx) \in I$ for all $x \in X(E,\mathbf{N},\mathbf{S})$ if and only if $Kx \in X(E,\mathbf{N},\mathbf{S})_{C_0(E^0 \setminus F_I^0)}$ for all $x \in X(E,\mathbf{N},\mathbf{S})$.
\end{lemma}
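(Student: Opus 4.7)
The plan is to reduce both statements to the well-known characterization that for a right Hilbert $A$-module $X$ and a closed two-sided ideal $J$ of $A$, one has $X_J = \{x \in X : \langle x,x\rangle_A \in J\}$. One inclusion is immediate from continuity; for the reverse, if $\langle x,x\rangle_A \in J$ and $(u_\lambda)$ is an approximate identity of $J$, then the standard computation
\[
\Vert x - x \cdot u_\lambda \Vert^2 = \Vert \langle x,x\rangle_A - \langle x,x\rangle_A u_\lambda - u_\lambda \langle x,x\rangle_A + u_\lambda \langle x,x\rangle_A u_\lambda \Vert \longrightarrow 0
\]
shows $x \cdot u_\lambda \to x$, so $x \in X_J$. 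I will apply this with $A = C_0(E^0)$ and $J = C_0(E^0 \setminus F_I^0) = \pi^{-1}(I)$, which is precisely the defining property of $F_I^0$ from Definition~\ref{defined rho(I)}.

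For the first statement, if $\psi(x) \in I$ then $\psi(x)^*\psi(x) = \pi(\langle x,x\rangle_{C_0(E^0)}) \in I$, hence $\langle x,x\rangle_{C_0(E^0)} \in \pi^{-1}(I) = C_0(E^0 \setminus F_I^0)$, and the characterization above gives $x \in X(E,\mathbf{N},\mathbf{S})_{C_0(E^0 \setminus F_I^0)}$. Conversely, any $x \in X(E,\mathbf{N},\mathbf{S})_{C_0(E^0 \setminus F_I^0)}$ is a norm-limit of finite sums $\sum y_i \cdot f_i$ with $f_i \in C_0(E^0 \setminus F_I^0)$, and since $\psi(y_i \cdot f_i) = \psi(y_i)\pi(f_i) \in I$ for each $i$, continuity yields $\psi(x) \in I$.

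For the second statement, I will first record the identity $\psi^{(1)}(K)\psi(x) = \psi(Kx)$ for all $K \in \mathcal{K}(X(E,\mathbf{N},\mathbf{S}))$ and $x \in X(E,\mathbf{N},\mathbf{S})$: this is immediate for a rank-one operator $K = \Theta_{y,z}$ from the Toeplitz relations, and extends to arbitrary $K$ by linearity and continuity. Thus (i)$\Rightarrow$(ii) is immediate, and (ii)$\Leftrightarrow$(iii) follows by applying the first statement to $Kx$ in place of $x$. For (ii)$\Rightarrow$(i), the key observation is the identity $K \Theta_{x,y} = \Theta_{Kx,y}$, so
\[
\psi^{(1)}(K \Theta_{x,y}) = \psi^{(1)}(\Theta_{Kx,y}) = \psi(Kx)\psi(y)^* \in I
\]
whenever $\psi(Kx) \in I$. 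By linearity and continuity, $\psi^{(1)}(KT) \in I$ for every $T \in \mathcal{K}(X(E,\mathbf{N},\mathbf{S}))$; taking $(e_\lambda)$ to be an approximate identity of $\mathcal{K}(X(E,\mathbf{N},\mathbf{S}))$, we have $Ke_\lambda \to K$ in norm, hence $\psi^{(1)}(Ke_\lambda) = \psi^{(1)}(K)\psi^{(1)}(e_\lambda) \to \psi^{(1)}(K)$, and closedness of $I$ delivers $\psi^{(1)}(K) \in I$.

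The argument is largely routine Hilbert module bookkeeping once the characterization $X_J = \{x : \langle x,x\rangle_A \in J\}$ is established; the only delicate step is the approximate-identity calculation feeding into that characterization, which is where the specific definition $\pi^{-1}(I) = C_0(E^0 \setminus F_I^0)$ couples the abstract correspondence picture to the topological graph data.
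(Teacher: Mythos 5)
Your proof is correct and follows essentially the same route as the paper: the first statement via $\psi(x)^*\psi(x)=\pi(\langle x,x\rangle_{C_0(E^0)})$ together with $\pi^{-1}(I)=C_0(E^0\setminus F_I^0)$, and the second via the identities $\psi^{(1)}(K)\psi(x)=\psi(Kx)$ and $K\Theta_{x,y}=\Theta_{Kx,y}$. You merely make explicit two steps the paper leaves implicit, namely the characterization $X_J=\{x:\langle x,x\rangle_A\in J\}$ and the approximate-identity argument passing from $\psi^{(1)}(K\Theta_{x,y})\in I$ to $\psi^{(1)}(K)\in I$.
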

\begin{proof}
Fix $x \in X(E,\mathbf{N},\mathbf{S})$. Then $\psi(x) \in I$ if and only if $\pi(\langle x, x \rangle_{C_0(E^0)}) \in I$ if and only if $x \in X(E,\mathbf{N},\mathbf{S})_{C_0(E^0 \setminus F_I^0)}$.

Fix $K \in \mathcal{K}(X(E,\mathbf{N},\mathbf{S}))$. Suppose that $\psi^{(1)}(K) \in I$. For $x \in X(E,\mathbf{N},\mathbf{S})$, we have $\psi^{(1)}(K)\psi(x)=\psi(Kx) \in I$. Now suppose that $\psi(Kx) \in I$ for all $x \in X(E,\mathbf{N},\mathbf{S})$. For $x,y \in X(E,\mathbf{N},\mathbf{S})$, we have $\psi^{(1)}(K \Theta_{x,y})=\psi^{(1)}(\Theta_{Kx,y})=\psi(Kx)\psi(y)^* \in I$. So $\psi^{(1)}(K) \in I$. By the first statement, $\psi^{(1)}(K) \in I$ if and only if $Kx \in X(E,\mathbf{N},\mathbf{S})_{C_0(E^0 \setminus F_I^0)}$ for all $x \in X(E,\mathbf{N},\mathbf{S})$.
\end{proof}

The following proposition is a generalization of \cite[Proposition~2.8]{Katsura:ETDS06}.

\begin{prop}\label{rho I is admissible pair}
Let $I$ be a closed two-sided ideal of $\mathcal{O}(E,\mathbf{N},\mathbf{S})$. Then $\rho(I)$ is an admissible pair.
\end{prop}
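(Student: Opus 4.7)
The plan is to verify the three requirements for $\rho(I)=(F_I^0,Z_I)$ to be admissible: $F_I^0$ is closed and invariant, $Z_I$ is closed, and $F_{I,\mathrm{sg}}^0 \subset Z_I \subset E_{\mathrm{sg}}^0 \cap F_I^0$. Closedness of both subsets will be immediate: $\pi^{-1}(I)$ is clearly a closed ideal of $C_0(E^0)$, and since $B_1=\psi^{(1)}(\mathcal{K}(X(E,\mathbf{N},\mathbf{S})))$ is a closed $C^*$-subalgebra (using injectivity of $\psi^{(1)}$ from Proposition~\ref{define pi_n^n on X(E,N,S)}), the standard observation $I+B_1=q^{-1}(q(B_1))$ for the quotient $q:\mathcal{O}(E,\mathbf{N},\mathbf{S})\to\mathcal{O}(E,\mathbf{N},\mathbf{S})/I$ makes $I+B_1$ closed, so $\pi^{-1}(I+B_1)$ is also a closed ideal. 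For the sandwich, $Z_I\subset F_I^0$ follows from $I\subset I+B_1$, and $Z_I\cap E_{\mathrm{rg}}^0=\emptyset$ follows from covariance together with Proposition~\ref{phi^{-1}(K(X)) intersects ker(phi^{perp}) of X_{E,N,S}}, which puts $\pi(C_0(E_{\mathrm{rg}}^0))$ inside $B_1\subset I+B_1$.

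Invariance of $F_I^0$ splits into two parts. To show $r(F_I^1)\subset F_I^0$, I would argue by contradiction: given $e\in F_I^1$ with $r(e)\notin F_I^0$, I pick $f\in C_0(E^0)$ vanishing on $F_I^0$ with $f(r(e))\neq 0$, together with $y\in C_c(E,\mathbf{N},\mathbf{S})$ whose $\alpha_0$-component is nonzero at $e$; then $\pi(f)\in I$ forces $\psi(f\cdot y)\in I$, and Lemma~\ref{criteria to check whether element in I} places $f\cdot y\in X(E,\mathbf{N},\mathbf{S})_{C_0(E^0\setminus F_I^0)}$, contradicting positivity of $\langle f\cdot y,f\cdot y\rangle$ at $s(e)\in F_I^0$. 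For the second part, given $v\in E_{\mathrm{rg}}^0\cap F_I^0$ with $r^{-1}(v)\cap F_I^1=\emptyset$, the compactness built into the definition of $E_{\mathrm{rg}}^0$ would yield an open $V\ni v$ with $r^{-1}(V)\cap F_I^1=\emptyset$; choosing $f\in C_c(V\cap E_{\mathrm{rg}}^0)$ with $f(v)=1$ makes $(f\cdot x)_\alpha$ vanish on $F_I^1\cap\overline{N_\alpha}$ for every $x$, so $\phi(f)x\in X(E,\mathbf{N},\mathbf{S})_{C_0(E^0\setminus F_I^0)}$; covariance combined with Lemma~\ref{criteria to check whether element in I} then forces $\pi(f)=\psi^{(1)}(\phi(f))\in I$, contradicting $f(v)=1$.

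The main obstacle is the inclusion $F_{I,\mathrm{sg}}^0\subset Z_I$. I would fix $v\in F_{I,\mathrm{sg}}^0$ and $\pi(f)=a+\psi^{(1)}(K)\in I+B_1$ with $f(v)\neq 0$, and aim for a contradiction. For every $x\in X(E,\mathbf{N},\mathbf{S})$, the identity $\psi(f\cdot x-Kx)=a\psi(x)\in I$ together with Lemma~\ref{criteria to check whether element in I} forces $f\cdot x-Kx$ to vanish on $F_I^1$; letting $x$ range over $C_c$-extensions of all elements of $C_c(F_I^1,\mathbf{N}^{F_I^1},\mathbf{S}^{F_I^1})$ (Proposition~\ref{Katsura's Ext Thm}) then gives $\omega(\phi(f))=\omega(K)$, which by Proposition~\ref{omega maps K(X(E,N,S)) onto K(X(X^1,N^X^1,S^X^1))} lies in $\mathcal{K}(X(F_I^1,\mathbf{N}^{F_I^1},\mathbf{S}^{F_I^1}))$. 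Since $\omega(\phi(f))$ coincides with the left action of $f|_{F_I^0}$ on the restricted correspondence, Proposition~\ref{phi^{-1}(K(X)) intersects ker(phi^{perp}) of X_{E,N,S}} applied to the subgraph $F=(F_I^0,F_I^1,r|_{F_I^1},s|_{F_I^1})$ places $f|_{F_I^0}\in C_0(F_{I,\mathrm{fin}}^0)$, so $v\in F_{I,\mathrm{fin}}^0$ and hence $v\in\overline{F_{I,\mathrm{sce}}^0}$. Finally, I would approximate $v$ by a net $v_i\in F_{I,\mathrm{sce}}^0$ with $f(v_i)\neq 0$, and for each $i$ choose an open $V_i\ni v_i$ in $E^0$ with $V_i\cap r(F_I^1)=\emptyset$ together with a bump $g_i\in C_c(V_i)$ such that $g_i(v_i)=1$; then $\pi(fg_i)\in I+B_1$ and $\mathrm{supp}(fg_i)\cap r(F_I^1)=\emptyset$, so rerunning the argument from the second part of invariance on $fg_i$ would yield $\pi(fg_i)\in I$, contradicting $(fg_i)(v_i)=f(v_i)\neq 0$ at $v_i\in F_I^0$.
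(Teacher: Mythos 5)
Your proposal is correct and verifies the same three properties, but in the hardest step it takes a genuinely different route from the paper. For the inclusion $(F_I^0)_{\mathrm{sg}} \subset Z_I$, the paper argues directly: assuming $v \in F_I^0\setminus Z_I$ lies outside $(F_I^0)_{\mathrm{fin}}$, it approximates $K$ by a finite-rank operator within $1/2$, uses non-compactness of $r^{-1}(C)$ to produce an edge $e$ outside all the relevant supports, and derives a contradiction by evaluating an inner product at $s(e)$. You instead observe that $\phi(f)-K\in\ker(\omega)$ (via Lemma~\ref{criteria to check whether element in I} and Equation~(\ref{compute of the kernel of omega})), so that $\omega(\phi(f))=\omega(K)\in\mathcal{K}(X(F_I^1,\mathbf{N}^{F_I^1},\mathbf{S}^{F_I^1}))$ by Proposition~\ref{omega maps K(X(E,N,S)) onto K(X(X^1,N^X^1,S^X^1))}, and then invoke Proposition~\ref{phi^{-1}(K(X)) intersects ker(phi^{perp}) of X_{E,N,S}} for the restricted graph $F_I$ to conclude $f\vert_{F_I^0}\in C_0((F_I^0)_{\mathrm{fin}})$. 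This is cleaner in that it reuses the already-established computation $\phi^{-1}(\mathcal{K})=C_0(E^0_{\mathrm{fin}})$ rather than redoing the approximation by hand; the price is that you must note explicitly that $\omega(\phi(f))$ is the left action of $f\vert_{F_I^0}$ on the subgraph correspondence (which uses the invariance $r(F_I^1)\subset F_I^0$ you prove first) and that the cited proposition applies to $(F_I^0,F_I^1,r\vert_{F_I^1},s\vert_{F_I^1})$ with the restricted cover and cocycle. Your treatment of the source case (bumps $g_i$ at points of $(F_I^0)_{\mathrm{sce}}$ near $v$) is essentially the paper's first subcase in disguise; the only imprecision is the phrase ``rerunning the argument from the second part of invariance,'' since that argument relied on covariance for $f\in C_0(E^0_{\mathrm{rg}})$, which $fg_i$ need not satisfy --- what you actually need, and what your stated hypotheses deliver, is that $(fg_i)\cdot x$ and $(fg_i)\cdot x-K'x$ both lie in $X(E,\mathbf{N},\mathbf{S})_{C_0(E^0\setminus F_I^0)}$, whence $\psi^{(1)}(K')\in I$ and $\pi(fg_i)\in I$ by two applications of Lemma~\ref{criteria to check whether element in I}. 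Your remaining steps (closedness of $I+B_1$, the sandwich $Z_I\subset E^0_{\mathrm{sg}}\cap F_I^0$, and both halves of invariance) coincide with the paper's.
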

\begin{proof}
Firstly, we prove that $F_I^0$ is invariant. Fix $e \in F_I^1 \cap N_{\alpha_0}$. Suppose that $r(e) \notin F_I^0$, for a contradiction. By the Urysohn's lemma, there exist $x \in C_0(N_{\alpha_0})$ and $f \in C_0(E^0 \setminus F_I^0)$ such that $x(e)=1$ and $f(r(e))=1$. Then $\pi(f)\psi((x^{\Ind_{\alpha_0}^{\alpha}}))=\psi(f\cdot (x^{\Ind_{\alpha_0}^{\alpha}})) \in I$ because $\pi(f) \in I$. By Lemma~\ref{criteria to check whether element in I}, $\langle f \cdot (x^{\Ind_{\alpha_0}^{\alpha}}), f \cdot (x^{\Ind_{\alpha_0}^{\alpha}}) \rangle_{C_0(E^0)} \in C_0(E^0 \setminus F^0)$. However,
\[
\langle f \cdot (x^{\Ind_{\alpha_0}^{\alpha}}), f \cdot (x^{\Ind_{\alpha_0}^{\alpha}}) \rangle_{C_0(E^0)}(s(e)) \geq [f \cdot (x^{\Ind_{\alpha_0}^{\alpha}}) \vert f \cdot (x^{\Ind_{\alpha_0}^{\alpha}})](e)=1,
\]
which is a contradiction. So $r(e) \in F_I^0$. Now fix $v \in E_{\mathrm{rg}}^0 \cap F_I^0$. Suppose that $r^{-1}(v) \cap F_I^1=\emptyset$, for a contradiction. By \cite[Lemma~1.4]{Katsura:ETDS06}, there exists an open neighborhood $V \subset E_{\mathrm{rg}}^0$ of $v$, such that $r^{-1}(V) \cap F_I^1=\emptyset$. By the Urysohn's lemma, there exists $f \in C_0(V) \subset C_0(E_{\mathrm{rg}}^0)$ such that $f(v)=1$. Then $\pi(f) \notin I$ because $v \in F_I^0$. However, since $r^{-1}(V) \cap F_I^1=\emptyset$, we have $\langle f\cdot x , f\cdot x \rangle_{C_0(E^0)}(F_I^0)=0$ for all $x \in C_c(E,\mathbf{N},\mathbf{S})$. By Lemma~\ref{criteria to check whether element in I}, $\psi^{(1)}(\phi(f)) \in I$. By the covariance of $(\psi,\pi)$, we get $\pi(f)=\psi^{(1)}(f)$, which is a contradiction. So $r^{-1}(v) \cap F_I^1\neq\emptyset$, and $F_I^0$ is invariant.

Now we show that $Z_I \subset E_{\mathrm{sg}}^0 \cap F_I^0$. It is obvious that $Z_I \subset F_I^0$. Fix $v \in Z_I$. Suppose that $v \in E_{\mathrm{rg}}^0$, for a contradiction. By the Urysohn's lemma, there exists $f \in C_0(E_{\mathrm{rg}}^0)$ such that $f(v)=1$. By the covariance of $(\psi,\pi)$, we have $\pi(f)=\psi^{(1)}(\phi(f)) \in I+B_1$. So $f(Z_I)=0$, which is a contradiction. Hence $v \in E_{\mathrm{sg}}^0$ and $Z_I \subset E_{\mathrm{sg}}^0 \cap F_I^0$.

Finally we prove that $(F_I^0)_{\mathrm{sg}} \subset Z_I$. It is equivalent to show that $F_I^0 \setminus Z_I \subset (F_I^0)_{\mathrm{rg}}$. Fix $v \in F_I^0 \setminus Z_I$. Suppose that $v \in \overline{F_I^0 \setminus \overline{r(F_I^1)}}$ for a contradiction. Choose an open neighborhood $V$ of $v$ such that $V \cap Z_I=\emptyset$. Then there exists $w \in V \setminus \overline{r(F_I^1)}$. By the Urysohn's lemma, there exists $f \in C_0(E^0)$ such that $f(w)=1,f(\overline{r(F_I^1)})=0$, and $f(Z_I)=0$. So $\pi(f)=i+\psi^{(1)}(K)\in I+B_1$, and $f \cdot x \in X(E,\mathbf{N},\mathbf{S})_{C_0(E^0 \setminus F_I^0)}$ for all $x \in X(E,\mathbf{N},\mathbf{S})$. By Lemma~\ref{criteria to check whether element in I}, $\psi(f \cdot x)=\pi(f)\psi(x)=i\psi(x)+\psi(Kx) \in I$ for all $x \in X(E,\mathbf{N},\mathbf{S})$. By Lemma~\ref{criteria to check whether element in I} again, $\psi^{(1)}(K) \in I$. So $\pi(f) \in I$ and $f(F_I^0)=0$, which contradicts with $f(w)=1$. Hence $v \in F_I^0 \setminus \overline{F_I^0 \setminus \overline{r(F_I^1)}}$.

Suppose that $v \notin (F_I^0)_{\mathrm{fin}}$, for a contradiction. Choose a compact neighborhood $C \subset F_I^0$ of $v$ such that $C \cap Z_I=\emptyset$. By the Urysohn's lemma, there exists $f \in C_0(E^0)$ such that $f(Z_I)=0$ and $f(C)=1$. Then $\pi(f)=i+\psi^{(1)}(K) \in I+B_1$. By Lemma~\ref{criteria to check whether element in I}, $f\cdot x-Kx \in X(E,\mathbf{N},\mathbf{S})_{C_0(E^0\setminus F_I^0)}$ for all $x \in X(E,\mathbf{N},\mathbf{S})$. Take $\{ x_i,y_i \}_{i=1}^{n} \subset C_c(E,\mathbf{N},\mathbf{S})$ with $\Vert \sum_{i=1}^{n}\Theta_{x_i,y_i}-K \Vert <1/2$. Since $r^{-1}(C)$ is not compact, there exists $e \in (r^{-1}(C) \cap F_I^1 \cap N_{\alpha_0}) \setminus \bigcup_{i=1}^{n}(\supp([x_i \vert x_i]) \cup \supp([y_i \vert y_i]))$. The Urysohn's lemma gives $g\in C_0(N_{\alpha_0})$ such that $g\big(\bigcup_{i=1}^{n}(\supp([x_i \vert x_i]) \cup \supp([y_i \vert y_i]))\big)=0$ and $g(e)=1$. Let $x:=(g^{\mathrm{Ind}_{\alpha_0}^{\alpha}})$. For any $w \in F_I^0$, we have
\begin{align*}
\Big\vert \Big\langle f \cdot x-\sum_{i=1}^{n}\Theta_{x_i,y_i}x,x \Big\rangle_{C_0(E^0)}(w) \Big\vert&\leq \Big\vert \Big\langle Kx-\sum_{i=1}^{n}\Theta_{x_i,y_i}x,x \Big\rangle_{C_0(E^0)}(w) \Big\vert
\\& \leq   \Big\Vert K-\sum_{i=1}^{n}\Theta_{x_i,y_i} \Big\Vert<1/2.
\end{align*}
However,
\begin{align*}
\Big\vert \Big\langle f \cdot x-\sum_{i=1}^{n}\Theta_{x_i,y_i}x,x \Big\rangle_{C_0(E^0)}(s(e)) \Big\vert &=\vert [f \cdot x \vert x ](e)\vert=1,
\end{align*}
which is a contradiction. So $v \in (F_I^0)_{\mathrm{fin}}$ and $v \in (F_I^0)_{\mathrm{rg}}$. Hence $\rho(I)$ is an admissible pair.
\end{proof}

We have now defined a map from the set of all admissible pairs of $E$ to the set of all closed two-sided ideals of $\mathcal{O}(E,\mathbf{N},\mathbf{S})$ by $\rho \to I(\rho)$, and defined a map from the set of all closed two-sided ideals of $\mathcal{O}(E,\mathbf{N},\mathbf{S})$ to the set of all admissible pairs of $E$ by $I \to \rho(I)$. Unfortunately, these two maps are not inverse to each other in general: $\rho \to I(\rho)$ is not surjective and $I \to \rho(I)$ is not injective. However, these two maps are in fact inverse to each other when we restrict to the set of all gauge-invariant closed two-sided ideals of $\mathcal{O}(E,\mathbf{N},\mathbf{S})$ (see Theorem~\ref{one-to-one corr between adm pairs and gauge-inv ideals}).

The following theorem is a generalization of \cite[Proposition~3.10]{Katsura:ETDS06}.

\begin{thm}\label{rho I_rho=rho}
Let $\rho=(F^0,Z)$ be an admissible pair of $E$. Then $\rho(I(\rho))=\rho$. 
\end{thm}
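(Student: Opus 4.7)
The plan is to show $F^0_I=F^0$ and $Z_I=Z$, where $I:=I(\rho)$ and $\rho(I)=(F^0_I,Z_I)$, by establishing each inclusion separately: the easy inclusions $F^0_I\subseteq F^0$ and $Z_I\subseteq Z$ by directly exhibiting elements in $J_\rho$ and $J_\rho+B_1$, and the hard inclusions $F^0\subseteq F^0_I$ and $Z\subseteq Z_I$ by constructing a covariant Toeplitz representation of $X(E,\mathbf{N},\mathbf{S})$ into a suitable modified Cuntz-Pimsner algebra whose integrated homomorphism kills $I$ and has $C_0(E^0)$-kernel precisely $C_0(E^0\setminus F^0)$ and $B_1$-preimage precisely $C_0(E^0\setminus Z)$.

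For the easy inclusions, given $f\in C_0(E^0\setminus F^0)$, the hypothesis $Z\subseteq F^0$ gives $f|_Z=0$, while $f\cdot x\in X(E,\mathbf{N},\mathbf{S})_{C_0(E^0\setminus F^0)}$ for every $x$ forces $\phi(f)\in\ker(\omega)$ via Equation~(\ref{compute of the kernel of omega}); hence $\pi(f)\in J_\rho\subseteq I$, yielding $F^0_I\subseteq F^0$. For $Z_I\subseteq Z$, I will take $f\in C_c(E^0\setminus Z)$ and use a partition of unity on $\supp(f)$ subordinate to the open cover $\{E^0\setminus F^0,U\}$, where $U$ is an open neighborhood of $F^0\cap\supp(f)$ disjoint from the closed set $Z$, to write $f=f_1+f_2$ with $f_2\in C_c(E^0\setminus F^0)$ (so $\pi(f_2)\in I$). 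The admissibility condition $F^0_{\mathrm{sg}}\subseteq Z$ yields $f_1|_{F^0}\in C_c(F^0\setminus Z)\subseteq C_0(F^0_{\mathrm{rg}})$, and surjectivity of $\omega$ on compact operators (Proposition~\ref{omega maps K(X(E,N,S)) onto K(X(X^1,N^X^1,S^X^1))}(\ref{image of omega on K(X(E,N,S))})) supplies $K\in\mathcal{K}(X(E,\mathbf{N},\mathbf{S}))$ with $\omega(K)$ equal to the left action of $f_1|_{F^0}$ on $X(F^1,\mathbf{N}^{F^1},\mathbf{S}^{F^1})$; then $\omega(\phi(f_1)-K)=0$, so $\pi(f_1)-\psi^{(1)}(K)\in J_\rho$ and $\pi(f)\in I+B_1$.

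For the hard inclusions, I set $Y:=Z\cap F^0_{\mathrm{rg}}$. Since $F^0_{\mathrm{sg}}\subseteq Z$, the set $F^0_{\mathrm{rg}}\setminus Y=F^0\setminus Z$ is open in $F^0$, so $Y$ is closed in $F^0_{\mathrm{rg}}$ in its subspace topology, and the $E_Y$-construction of Definition~\ref{define E_Y where Y is a closed subset of E_rg^0} applied to $F$ produces a graph $F_Y$ with $(F_Y^0)_{\mathrm{rg}}=(F^0\setminus Z)\times\{0\}$. Let $(t_Y,\tau_Y)$ be the universal injective covariant representation of $X(F_Y,\mathbf{N}_Y^F,\mathbf{S}_Y^F)$ in $\mathcal{O}(F_Y,\mathbf{N}_Y^F,\mathbf{S}_Y^F)$. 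I will define a Toeplitz representation $(\widetilde{\psi},\widetilde{\pi})$ of $X(E,\mathbf{N},\mathbf{S})$ in $\mathcal{O}(F_Y,\mathbf{N}_Y^F,\mathbf{S}_Y^F)$ by $\widetilde{\pi}(f):=\tau_Y((p_Y^0)_*(f|_{F^0}))$ and $\widetilde{\psi}(x):=t_Y((p_Y^1)_*(x|_{F^1}))$, and verify its covariance using that the duplicated vertices $Y\times\{1\}$ are sources in $F_Y$ and therefore absorb the covariance obstruction at $Y$. The integrated homomorphism $h:\mathcal{O}(E,\mathbf{N},\mathbf{S})\to\mathcal{O}(F_Y,\mathbf{N}_Y^F,\mathbf{S}_Y^F)$ kills $J_\rho$: for $\pi(f)+\psi^{(1)}(K)\in J_\rho$, the condition $f|_Z=0$ puts $(p_Y^0)_*(f|_{F^0})$ in $C_0((F_Y^0)_{\mathrm{rg}})$, and $\omega(\phi(f)+K)=0$ combined with covariance of $(t_Y,\tau_Y)$ yields $h(\pi(f)+\psi^{(1)}(K))=0$. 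Injectivity of $\tau_Y$ then gives $\ker(h\circ\pi)=C_0(E^0\setminus F^0)$, forcing $F^0\subseteq F^0_I$, and the identity $\tau_Y^{-1}(t_Y^{(1)}(\mathcal{K}(X(F_Y,\mathbf{N}_Y^F,\mathbf{S}_Y^F))))=C_0((F^0\setminus Z)\times\{0\})$ forces $Z\subseteq Z_I$. The hard part will be verifying covariance of $(\widetilde{\psi},\widetilde{\pi})$ on all of $C_0(E^0_{\mathrm{rg}})$ and precisely identifying $(F_Y^0)_{\mathrm{rg}}$, both of which reduce to carefully tracking how vertices change regular-vs-singular status across $E$, $F$, and $F_Y$.
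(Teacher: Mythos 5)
Your two ``easy'' inclusions coincide with the paper's: membership of $\pi(f)$ in $J_\rho$ for $f\in C_0(E^0\setminus F^0)$, and of $\pi(f)-\psi^{(1)}(K)$ in $J_\rho$ with $\omega(K)=\phi(f\vert_{F^0})$ for $f$ vanishing on $Z$ (your partition-of-unity decomposition $f=f_1+f_2$ is harmless but redundant, since $f\vert_{F^0}\in C_0(F^0\setminus Z)\subseteq C_0(F^0_{\mathrm{rg}})$ already). For the hard inclusions, however, you take a genuinely different route. The paper argues directly inside $\mathcal{O}(E,\mathbf{N},\mathbf{S})$: for $Z\subseteq Z_{I(\rho)}$ it approximates an element of $I(\rho)+B_1$ by the spanning set of Proposition~\ref{simplification of I_rho} and applies $\pi_0^\infty\circ\Gamma$ to evaluate at points of $Z\subseteq E^0_{\mathrm{sg}}$; for $F^0\subseteq F^0_{I(\rho)}$ it runs a case analysis on paths with range $v$ (ending in $Z$, or extendable to arbitrary length inside $F^0$ --- exhaustive by invariance) and derives a contradiction from explicit inner-product estimates. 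You instead build a single covariant representation of $X(E,\mathbf{N},\mathbf{S})$ into $\mathcal{O}(F_Y,\ldots)$ with $Y=Z\cap F^0_{\mathrm{rg}}$ and read both hard inclusions off the integrated homomorphism; this is essentially the machinery the paper only deploys later, in Lemma~\ref{construct (psi_rho I,pi_rho I) from an ideal I of O(E,N,S)} and Theorem~\ref{I(rho(I))=I if I is gauge-inv}, but run in the forward direction. The plan is sound: covariance of $(\widetilde\psi,\widetilde\pi)$ over $C_0(E^0_{\mathrm{rg}})$ works because $Z\subseteq E^0_{\mathrm{sg}}$ forces $(p_Y^0)_*(f\vert_{F^0})$ into $C_0((F_Y^0)_{\mathrm{rg}})$ and because invariance gives $E^0_{\mathrm{rg}}\cap F^0\subseteq F^0_{\mathrm{rg}}$; and $Z\subseteq Z_{I(\rho)}$ follows from $\tau_Y^{-1}(t_Y^{(1)}(\mathcal{K}))=J_{X(F_Y)}$, which is the standard core fact for universal covariant representations that Definition~\ref{defined rho(I)} already presupposes. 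Two cautions. First, your explanation of covariance is slightly miscast: since $Y\subseteq Z\subseteq E^0_{\mathrm{sg}}$, no covariance is demanded of $X(E,\mathbf{N},\mathbf{S})$ at $Y$ in the first place; the duplicated sources $Y\times\{1\}$ matter not for covariance but for enlarging $\tau_Y^{-1}(t_Y^{(1)}(\mathcal{K}))$-complement from $F^0_{\mathrm{sg}}$ to $Z$, which is precisely what makes $Z\subseteq Z_{I(\rho)}$ (rather than only $F^0_{\mathrm{sg}}\subseteq Z_{I(\rho)}$) come out. Second, the identification $(F_Y^0)_{\mathrm{rg}}=(F^0\setminus Z)\times\{0\}$ and the identity $\widetilde\psi^{(1)}(K)=t_Y^{(1)}$ of the pushforward of $\omega(K)$ are exactly the points where real work hides; they are true (the former via the computation of $J_{X(E_Y)}$ used in Proposition~\ref{tilde{psi},tilde{pi} is a cov Toep rep}, the latter by checking on rank-one operators), but your proof is not complete until they are carried out. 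What your approach buys is uniformity --- one structural construction replaces two separate ad hoc estimates --- at the cost of importing the $E_Y$ machinery into a theorem the paper proves by elementary means.
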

\begin{proof}
To prove that $Z=Z_{I(\rho)}$, it suffices to show that $\pi^{-1}(I(\rho)+B_1)=C_0(E^0 \setminus Z)$. Fix $f \in \pi^{-1}(I(\rho)+B_1)$, and fix $v \in Z$. Then $\pi(f)=i+\psi^{(1)}(K) \in I(\rho)+B_1$. For $\epsilon>0$, by Proposition~\ref{simplification of I_rho}, there exist $\xi_i \in C_c(E_{n_i},\mathbf{N}^{n_i},\mathbf{S}^{n_i}), \eta_i \in C_c(E_{m_i},\mathbf{N}^{m_i},\mathbf{S}^{m_i})$, and $\pi(f_i)+\psi^{(1)}(K_i) \in J_\rho$, such that
\[
\Big\Vert \sum_i \psi_{n_i}(\xi_i)(\pi(f_i)+\psi^{(1)}(K_i))\psi_{m_i}(\eta_i)^*-(\pi(f)-\psi^{(1)}(K)) \Big\Vert<\epsilon.
\]
By Proposition~\ref{define pi_{k}^{n} when 0 leq k <n}, we have
\begin{align*}
\Big\Vert \pi_0^\infty\circ\Gamma\Big( \sum_i \psi_{n_i}(\xi_i)(\pi(f_i)&+\psi^{(1)}(K_i))\psi_{m_i}(\eta_i)^*-(\pi(f)-\psi^{(1)}(K))\Big) \Big\Vert\\&=\Big\Vert \sum_{n_i=m_i=0}(\xi_if_i\eta_i^*)\vert_{E_{\mathrm{sg}}^0}-f \vert_{E_{\mathrm{sg}}^0} \Big\Vert<\epsilon.
\end{align*}
So $\vert \sum_{n_i=m_i=0}(\xi_if_i\eta_i^*)(v)-f(v) \vert=\vert f(v) \vert<\epsilon$, giving $f \in C_0(E^0 \setminus Z)$. Conversely, fix $f \in C_0(E^0 \setminus Z)$. Let $\omega:\mathcal{L}(X(E,\mathbf{N},\mathbf{S})) \to \mathcal{L}(X(F^1,\mathbf{N}^{F^1},\mathbf{S}^{F^1}))$ be the homomorphism of Proposition~\ref{omega is a homomorphism}. Since $F_{\mathrm{sg}}^0 \subset Z$, we have $\phi(f \vert_{F^0}) \in \mathcal{K}(X(F^1,\mathbf{N}^{F^1},\mathbf{S}^{F^1}))$. By Proposition~\ref{omega maps K(X(E,N,S)) onto K(X(X^1,N^X^1,S^X^1))}, there exists $K \in \mathcal{K}(X(E,\mathbf{N},\mathbf{S}))$ such that $\omega(K)=\phi(f \vert_{F^0})$. Since $\omega(\phi(f)-K)=0$, we have $\pi(f)+\psi^{(1)}(K) \in J_\rho$. So $f \in \pi^{-1}(I(\rho)+B_1)$.

We verify that $F_{I(\rho)}^0=F^0$. Fix $f \in C_0(E^0 \setminus F^0)$. Since $Z \subset F^0$, we have $f(Z)=0$. Since $\phi(f) x \in X(E,\mathbf{N},\mathbf{S})_{C_0(E^0 \setminus F^0)}$ for all $x \in X(E,\mathbf{N},\mathbf{S})$, we have $\phi(f) \in \ker(\omega)$. So $\pi(f) \in J_\rho \subset I(\rho)$. Hence $C_0(E^0 \setminus F^0) \subset \pi^{-1}(I(\rho))$ and $F_{I(\rho)}^0 \subset F^0$. Fix $v \in F^0$. Suppose that $v \notin F_{I(\rho)}^0$, for a contradiction. By the Urysohn's lemma, there exists $f \in C_0(E^0 \setminus F_{I(\rho)}^0)$ such that $f(v)=1$. So $\pi(f) \in I_\rho$. By Proposition~\ref{rho I is admissible pair}, $Z=Z_{I(\rho)} \subset F_{I(\rho)}^0$. We consider two cases.

Case $1$: There exist $n \geq 1$ and $(e_1,\dots,e_n) \in (N_{\alpha_1} \times\dots\times N_{\alpha_n}) \cap E^n$ such that $r(e_1)=v$ and $s(e_n) \in Z$. By the Urysohn's lemma there exists $g \in C_0(N_{\alpha_1} \times\dots\times N_{\alpha_n} \cap E^n)$ such that $g(e_1,\dots,e_n)=1$. Let $\xi:=(g^{\Ind_{\alpha_1, \dots, \alpha_n}^{\beta_1,\dots,\beta_n}})$. Since $\pi(f) \in I(\rho)$, we have $\pi(\langle f \cdot \xi,f \cdot \xi \rangle_{C_0(E^0)}) \in I(\rho)$. So $\langle f \cdot \xi,f \cdot \xi \rangle_{C_0(E^0)}(F_{I(\rho)}^0)=0$. However, 
\begin{align*}
\langle f \cdot \xi,f \cdot \xi \rangle_{C_0(E^0)}(s(e_n))&\geq [f \cdot \xi,f \cdot \xi](e_1,\dots,e_n)
=\vert f(r^n(e_1,\dots,e_n))g(e_1,\dots,e_n)\vert^2=1,
\end{align*}
which contradicts $\langle f \cdot \xi,f \cdot \xi \rangle_{C_0(E^0)}(F_{I(\rho)}^0)=0$.

Case $2$: For any $n \geq 1$ there exists $(e_1,\dots,e_n) \in E^n$ such that $r(e_1)=v$ and $s(e_n) \in F^0$. Since $\pi(f) \in I(\rho)$, by Proposition~\ref{simplification of I_rho}, there exist $\xi_i \in C_c(E_{n_i},\mathbf{N}^{n_i},\mathbf{S}^{n_i}), \eta_i \in C_c(E_{m_i},\mathbf{N}^{m_i},\mathbf{S}^{m_i})$, and $\pi(f_i)+\psi^{(1)}(K_i) \in J_\rho$, such that
\[
\Big\Vert \sum_i \psi_{n_i}(\xi_i)(\pi(f_i)+\psi^{(1)}(K_i))\psi_{m_i}(\eta_i)^*-\pi(f) \Big\Vert<1/2.
\]
So
\begin{align*}
\Big\Vert \Gamma\Big(\sum_i \psi_{n_i}(\xi_i)(\pi(f_i)&+\psi^{(1)}(K_i))\psi_{m_i}(\eta_i)^*-\pi(f)\Big) \Big\Vert
\\&\leq\Big\Vert \sum_{n_i=m_i} \psi_{n_i}(\xi_i)(\pi(f_i)+\psi^{(1)}(K_i))\psi_{m_i}(\eta_i)^*-\pi(f) \Big\Vert<1/2.
\end{align*}
Let $n:=\max\{n_i:n_i=m_i\}+1$. Then there exists $(e_1,\dots,e_n) \in (N_{\alpha_1} \times\dots\times N_{\alpha_n} ) \cap E^n$ such that $r(e_1)=v$ and $s(e_n) \in F^0$. By the Urysohn's lemma, there exists $g \in C_0((N_{\alpha_1} \times\dots\times N_{\alpha_n}) \cap E^n)$, such that $g(e_1,\dots,e_n)=1$. Let $\xi:=(g^{\Ind_{\alpha_1, \dots, \alpha_n}^{\beta_1,\dots,\beta_n}})$. Since $(N_{\alpha_1} \times\dots\times N_{\alpha_n} )\cap E^n$ is an $s^n$-section, we have $\Vert \xi\Vert_{C_0(E^0)}=1$. We notice that if $n_i=m_i$ then 
\[
\psi_n(\xi)^*\psi_{n_i}(\xi_i)(\pi(f_i)+\psi^{(1)}(K_i))\psi_{m_i}(\eta_i)^*\psi_n(\xi) \in \pi(C_0(E^0 \setminus F^0)).
\]
So for any $w \in F^0, \vert \langle \xi,f \cdot \xi \rangle_{C_0(E^0)}(w)\vert<1/2$. However, 
\[
\vert \langle \xi,f \cdot \xi \rangle_{C_0(E^0)}(s(e_1,\dots,e_n))\vert=\vert f(r(e_1,\dots,e_n))g(e_1,\dots,e_n)^2\vert=1,
\]
which is a contradiction. Hence $v \in F_{I(\rho)}^0,F_{I(\rho)}^0=F^0$, and $\rho(I(\rho))=\rho$.
\end{proof}

The following proposition is a generalization of \cite[Lemma~3.11]{Katsura:ETDS06}.

\begin{prop}\label{J_rho I,I_rho I subset I}
Let $I$ be a closed two-sided ideal of $\mathcal{O}(E,\mathbf{N},\mathbf{S})$. Then $J_{\rho(I)} \subset I$, and hence $I(\rho(I)) \subset I$.
\end{prop}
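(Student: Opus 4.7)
The plan is to reduce the problem to the criterion supplied by Lemma~\ref{criteria to check whether element in I}, by splitting $\pi(f)+\psi^{(1)}(K)$ into a piece that is manifestly in $I$ and a compact piece to which the criterion applies. To show the second statement, note that once $J_{\rho(I)}\subset I$ is established, the closed two-sided ideal $I(\rho(I))$ generated by $J_{\rho(I)}$ automatically lies in $I$, so only the first assertion requires work.

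For the first assertion, fix $\pi(f)+\psi^{(1)}(K)\in J_{\rho(I)}$, so $f(Z_I)=0$ and $\omega(\phi(f)+K)=0$, with $\omega$ taken relative to the closed subset $F_I^0$. My first step would invoke Definition~\ref{defined rho(I)}: since $f\in C_0(E^0\setminus Z_I)=\pi^{-1}(I+B_1)$, I can write $\pi(f)=i+\psi^{(1)}(K_0)$ for some $i\in I$ and some $K_0\in\mathcal{K}(X(E,\mathbf{N},\mathbf{S}))$. It then suffices to prove that $\psi^{(1)}(K_0+K)\in I$, because $\pi(f)+\psi^{(1)}(K)=i+\psi^{(1)}(K_0+K)$.

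Next I would exploit the cocycle condition $\omega(\phi(f)+K)=0$. By Equation~(\ref{compute of the kernel of omega}) of Proposition~\ref{omega is a homomorphism}, this says $(\phi(f)+K)x\in X(E,\mathbf{N},\mathbf{S})_{C_0(E^0\setminus F_I^0)}$ for every $x\in X(E,\mathbf{N},\mathbf{S})$. Applying Lemma~\ref{criteria to check whether element in I} gives $\psi\bigl((\phi(f)+K)x\bigr)\in I$; but $\psi\bigl((\phi(f)+K)x\bigr)=\pi(f)\psi(x)+\psi^{(1)}(K)\psi(x)=(\pi(f)+\psi^{(1)}(K))\psi(x)$, so $(\pi(f)+\psi^{(1)}(K))\psi(x)\in I$ for all $x$. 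Subtracting $i\psi(x)\in I$ and using the decomposition from the previous step yields $\psi^{(1)}(K_0+K)\psi(x)=\psi\bigl((K_0+K)x\bigr)\in I$ for every $x\in X(E,\mathbf{N},\mathbf{S})$.

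Finally, one more application of Lemma~\ref{criteria to check whether element in I} concludes $\psi^{(1)}(K_0+K)\in I$, which combined with the decomposition gives $\pi(f)+\psi^{(1)}(K)\in I$ as required. The only subtle point is the bookkeeping around the decomposition $\pi(f)=i+\psi^{(1)}(K_0)$: one must verify that the remaining compact operator $K_0+K$ sends each $x$ into $X(E,\mathbf{N},\mathbf{S})_{C_0(E^0\setminus F_I^0)}$, which I expect to be the main (mild) obstacle; it is handled cleanly by using Lemma~\ref{criteria to check whether element in I} twice, first to extract information from the covariance-style identity $\psi\bigl((\phi(f)+K)x\bigr)=(\pi(f)+\psi^{(1)}(K))\psi(x)$ and then to push that information back to the compact operator level.
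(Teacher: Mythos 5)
Your proposal is correct and follows essentially the same route as the paper's proof: both decompose $\pi(f)=i+\psi^{(1)}(K')$ using $f(Z_I)=0$, use $\omega(\phi(f)+K)=0$ together with Equation~(\ref{compute of the kernel of omega}) to see that $(\phi(f)+K)x$ lands in $X(E,\mathbf{N},\mathbf{S})_{C_0(E^0\setminus F_I^0)}$, and then apply Lemma~\ref{criteria to check whether element in I} twice to conclude $\psi^{(1)}(K+K')\in I$. The bookkeeping you flag as the only subtle point is handled exactly as you describe.
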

\begin{proof}
Let $\omega:\mathcal{L}(X(E,\mathbf{N},\mathbf{S})) \to \mathcal{L}(X(F_I^1,\mathbf{N}^{F_I^1},\mathbf{S}^{F_I^1}))$ be the homomorphism in Proposition~\ref{omega is a homomorphism}. Fix $\pi(f)+\psi^{(1)}(K) \in J_{\rho(I)}$. Since $f(Z_I)=0, \pi(f)=i+\psi^{(1)}(K') \in I+B_1$. Since $\omega(\phi(f)+K)=0$, by Proposition~\ref{omega is a homomorphism}, $f \cdot x+Kx \in X(E,\mathbf{N},\mathbf{S})_{C_0(E^0 \setminus F_I^0)}$ for all $x \in X(E,\mathbf{N},\mathbf{S})$. By Lemma~\ref{criteria to check whether element in I}, $\pi(f)\psi(x)+\psi(Kx) \in I$. So $\psi((K+K')x) \in I$. By Lemma~\ref{criteria to check whether element in I}, $\psi^{(1)}(K)+\psi^{(1)}(K') \in I$. So $\pi(f)+\psi^{(1)}(K) \in I$. Hence $J_{\rho(I)} \subset I$. Since $I(\rho(I))$ is a closed two-sided ideal of $\mathcal{O}(E,\mathbf{N},\mathbf{S})$ generated by $J_{\rho(I)}$, the result follows.
\end{proof}

Let $I$ be a closed two-sided ideal of $\mathcal{O}(E,\mathbf{N},\mathbf{S})$. Next we prove that if $I$ is gauge-invariant then $I(\rho(I))=I$. We set up some notation. By Proposition~\ref{rho I is admissible pair}, $\rho(I)=(F_I^0,Z_I)$ is an admissible pair and $F_I:=(F_I^0,F_I^1,r \vert_{F_I^1},s \vert_{F_I^1})$ is a topological graph. Define a closed subset $Y_I:=Z_I \cap (F_I^0)_{\mathrm{rg}}$ of $(F_I^0)_{\mathrm{rg}}$ in the subspace topology of $(F_I^0)_{\mathrm{rg}}$. By Definition~\ref{define E_Y where Y is a closed subset of E_rg^0}, define a topological graph $(F_I)_{Y_I}:=((F_I^0)_{Y_I},(F_I^1)_{Y_I},r_{Y_I},s_{Y_I})$, define a cover $(\mathbf{N}^{F_I^1})_{Y_I}$ of $(F_I^1)_{Y_I}$ by precompact open $s_{Y_I}$-sections, and define a $1$-cocycle $(\mathbf{S}^{F_I^1})_{Y_I}$ relative to $(\mathbf{N}^{F_I^1})_{Y_I}$. As described in the paragraph following Definition~\ref{define E_Y where Y is a closed subset of E_rg^0}, let $p_I^0:(F_I^0)_{Y_I} \to F_I^0$ and $p_I^1:(F_I^1)_{Y_I} \to F_I^1$ be two projections. Let $(p_I^0)_*:C_0(F_I^0) \to C_0((F_I^0)_{Y_I})$ be the homomorphism obtained from $p_I^0$, and let $(p_I^1)_*:X(F_I^1,\mathbf{N}^{F_I^1},\mathbf{S}^{F_I^1}) \to X((F_I^1)_{Y_I},(\mathbf{N}^{F_I^1})_{Y_I},(\mathbf{S}^{F_I^1})_{Y_I})$ be the norm-preserving linear map obtained from $p_I^1$. Let $(t_I^0,t_I^1)$ be the injective universal covariant Toeplitz representation of $X((F_I)_{Y_I},(\mathbf{N}^{F_I^1})_{Y_I},(\mathbf{S}^{F_I^1})_{Y_I})$ in $\mathcal{O}((F_I)_{Y_I},(\mathbf{N}^{F_I^1})_{Y_I},(\mathbf{S}^{F_I^1})_{Y_I})$. 

The following lemma is a generalization of \cite[Proposition~3.15]{Katsura:ETDS06}.

\begin{lemma}\label{construct (psi_rho I,pi_rho I) from an ideal I of O(E,N,S)}
Let $I$ be a closed two-sided ideal of $\mathcal{O}(E,\mathbf{N},\mathbf{S})$. Then there exists an injective Toeplitz representation $(\psi_I,\pi_I)$ of $X(F_I^1,\mathbf{N}^{F_I^1},\mathbf{S}^{F_I^1})$ in $\mathcal{O}(E,\mathbf{N},\mathbf{S}) / I$ such that $C^*(\psi_I,\pi_I)=\mathcal{O}(E,\mathbf{N},\mathbf{S}) / I$, and 
\begin{equation}\label{Y_{psi_I}=Y_I}
C_0((F_I^0)_{\mathrm{rg}} \setminus Y_I)=\{f \in C_0((F_I^0)_{\mathrm{rg}}): \psi_I^{(1)}(\phi(f))=\pi_I(f) \}.
\end{equation}
\end{lemma}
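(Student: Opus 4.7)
The plan is to lift elements of $X(F_I^1,\mathbf{N}^{F_I^1},\mathbf{S}^{F_I^1})$ and $C_0(F_I^0)$ to $X(E,\mathbf{N},\mathbf{S})$ and $C_0(E^0)$ via Proposition~\ref{Katsura's Ext Thm} and the Tietze extension theorem, apply the injective universal pair $(\psi,\pi)$, and then project through the quotient map $q:\mathcal{O}(E,\mathbf{N},\mathbf{S}) \to \mathcal{O}(E,\mathbf{N},\mathbf{S})/I$. Concretely, for $f \in C_0(F_I^0)$ I set $\pi_I(f):=q(\pi(\widetilde{f}))$ for any Tietze extension $\widetilde{f} \in C_0(E^0)$; both well-definedness and injectivity of $\pi_I$ follow from the defining identity $\pi^{-1}(I)=C_0(E^0 \setminus F_I^0)$ of Definition~\ref{defined rho(I)}. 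For $x \in C_c(F_I^1,\mathbf{N}^{F_I^1},\mathbf{S}^{F_I^1})$ I use Proposition~\ref{Katsura's Ext Thm} to obtain a norm-preserving extension $\widetilde{x} \in C_c(E,\mathbf{N},\mathbf{S})$ and set $\psi_I(x):=q(\psi(\widetilde{x}))$. Well-definedness follows from Lemma~\ref{criteria to check whether element in I}: the difference of two extensions lies in $X(E,\mathbf{N},\mathbf{S})_{C_0(E^0 \setminus F_I^0)}$ and is therefore mapped into $I$ by $\psi$; norm-preservation then gives $\Vert\psi_I(x)\Vert \leq \Vert x\Vert_{C_0(F_I^0)}$, allowing $\psi_I$ to extend continuously to all of $X(F_I^1,\mathbf{N}^{F_I^1},\mathbf{S}^{F_I^1})$.

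Next I verify the Toeplitz representation axioms by picking extensions $\widetilde{x},\widetilde{y}$ of $x,y$ and $\widetilde{f}$ of $f$ and observing that $\widetilde{f}\cdot\widetilde{x}$, $\widetilde{x}\cdot\widetilde{f}$, and $\langle\widetilde{x},\widetilde{y}\rangle_{C_0(E^0)}$ extend $f\cdot x$, $x \cdot f$, and $\langle x,y\rangle_{C_0(F_I^0)}$ respectively; the last uses $s^{-1}(F_I^0)=F_I^1$, so that the defining sum in Definition~\ref{define C_c(E,N,S)}(\ref{define langle x,y rangle_C_0(E^0)}) over edges mapping to $v \in F_I^0$ is the same whether computed in $E$ or in $F_I$. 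Applying $q$ then yields the required identities. For $C^*(\psi_I,\pi_I)=\mathcal{O}(E,\mathbf{N},\mathbf{S})/I$, any $z \in C_c(E,\mathbf{N},\mathbf{S})$ restricts coordinatewise to an element of $C_c(F_I^1,\mathbf{N}^{F_I^1},\mathbf{S}^{F_I^1})$ for which $z$ itself is a valid extension, hence $q(\psi(z)) \in \psi_I(X(F_I^1,\mathbf{N}^{F_I^1},\mathbf{S}^{F_I^1}))$; an analogous remark for $\pi$ shows $C^*(\psi_I,\pi_I)$ exhausts the image of $q$.

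The main obstacle is equation~(\ref{Y_{psi_I}=Y_I}). My first step is to compute $\psi_I^{(1)}$: for $\xi,\eta \in C_c(F_I^1,\mathbf{N}^{F_I^1},\mathbf{S}^{F_I^1})$ with extensions $\widetilde{\xi},\widetilde{\eta}$, Proposition~\ref{omega maps K(X(E,N,S)) onto K(X(X^1,N^X^1,S^X^1))}(\ref{Theta_{x,y}=Theta_{x_{X^1},y_{X^1}}}) gives $\omega(\Theta_{\widetilde{\xi},\widetilde{\eta}})=\Theta_{\xi,\eta}$, while by construction $\psi_I^{(1)}(\Theta_{\xi,\eta})=q(\psi^{(1)}(\Theta_{\widetilde{\xi},\widetilde{\eta}}))$. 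Linearity, continuity, and surjectivity of $\omega$ on the compacts (Proposition~\ref{omega maps K(X(E,N,S)) onto K(X(X^1,N^X^1,S^X^1))}(\ref{image of omega on K(X(E,N,S))})) yield $\psi_I^{(1)}(\omega(K))=q(\psi^{(1)}(K))$ for every $K \in \mathcal{K}(X(E,\mathbf{N},\mathbf{S}))$. Given $f \in C_0((F_I^0)_{\mathrm{rg}})$, I use $\phi(f) \in \mathcal{K}(X(F_I^1,\mathbf{N}^{F_I^1},\mathbf{S}^{F_I^1}))$ together with Proposition~\ref{omega maps K(X(E,N,S)) onto K(X(X^1,N^X^1,S^X^1))}(\ref{image of omega on K(X(E,N,S))}) to pick $K \in \mathcal{K}(X(E,\mathbf{N},\mathbf{S}))$ with $\omega(K)=\phi(f)$ and any Tietze extension $\widetilde{f} \in C_0(E^0)$ of $f$. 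A short calculation using the explicit formula for $\omega$ in Proposition~\ref{omega is a homomorphism} gives $\omega(\phi(\widetilde{f}))=\phi(f)$, so $\omega(\phi(-\widetilde{f})+K)=0$, and therefore $\psi_I^{(1)}(\phi(f))=\pi_I(f)$ is equivalent to $\pi(-\widetilde{f})+\psi^{(1)}(K) \in I$.

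To close the argument I combine this with Definition~\ref{define J_rho I(rho)} and Proposition~\ref{J_rho I,I_rho I subset I}. If $f \in C_0((F_I^0)_{\mathrm{rg}} \setminus Y_I)$, then, viewed by zero extension as an element of $C_0(F_I^0)$, $f$ vanishes on $Z_I$ since $Z_I \subset F_I^0$ and $Z_I \cap (F_I^0)_{\mathrm{rg}}=Y_I$; hence any Tietze extension $\widetilde{f}$ also satisfies $\widetilde{f}(Z_I)=0$, placing $\pi(-\widetilde{f})+\psi^{(1)}(K)$ in $J_{\rho(I)} \subset I$. Conversely, if $\psi_I^{(1)}(\phi(f))=\pi_I(f)$, then $\pi(\widetilde{f}) \in I+\psi^{(1)}(K) \subset I+B_1$, so by Definition~\ref{defined rho(I)} $\widetilde{f} \in C_0(E^0 \setminus Z_I)$, forcing $f$ to vanish on $Y_I \subset Z_I$, i.e., $f \in C_0((F_I^0)_{\mathrm{rg}} \setminus Y_I)$.
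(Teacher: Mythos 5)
Your proof is correct and, for the most part, is the paper's own argument: the construction of $(\psi_I,\pi_I)$ by lifting through the Tietze extension theorem and Proposition~\ref{Katsura's Ext Thm} and passing to the quotient, the well-definedness via Definition~\ref{defined rho(I)} and Lemma~\ref{criteria to check whether element in I}, and the converse inclusion in Equation~(\ref{Y_{psi_I}=Y_I}) all match. The one place you take a genuinely different route is the forward inclusion of Equation~(\ref{Y_{psi_I}=Y_I}). The paper starts from $f\in C_0(F_I^0\setminus Z_I)$, uses the definition of $Z_I$ to write $\pi(\widetilde f)=\psi^{(1)}(K)+i$ with $i\in I$, deduces $\pi_I(f)=\psi_I^{(1)}(\omega(K))$, and then invokes the injectivity of $\pi_I$ together with \cite[Proposition~3.3]{Katsura:JFA04} to upgrade this to $\pi_I(f)=\psi_I^{(1)}(\phi(f))$. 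You instead choose $K$ with $\omega(K)=\phi(f)$ via Proposition~\ref{omega maps K(X(E,N,S)) onto K(X(X^1,N^X^1,S^X^1))}, check that $\pi(-\widetilde f)+\psi^{(1)}(K)$ satisfies the two defining conditions of $J_{\rho(I)}$ in Definition~\ref{define J_rho I(rho)} (namely $\widetilde f(Z_I)=0$ and $\omega(\phi(-\widetilde f)+K)=0$), and conclude by Proposition~\ref{J_rho I,I_rho I subset I}. Your version is more self-contained, staying entirely within the Section~7 machinery and avoiding the external citation; its price is that you must make explicit the intertwining identity $\psi_I^{(1)}\circ\omega=q\circ\psi^{(1)}$ on $\mathcal{K}(X(E,\mathbf{N},\mathbf{S}))$, which you correctly reduce to rank-one operators using part~(\ref{Theta_{x,y}=Theta_{x_{X^1},y_{X^1}}}) of Proposition~\ref{omega maps K(X(E,N,S)) onto K(X(X^1,N^X^1,S^X^1))} — the paper uses the same identity silently when it writes $\pi_I(f)=\psi_I^{(1)}(\omega(K))$, so nothing is lost. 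Both arguments also rest on the identification $C_0((F_I^0)_{\mathrm{rg}}\setminus Y_I)=C_0(F_I^0\setminus Z_I)$, which requires $(F_I^0)_{\mathrm{sg}}\subset Z_I$ from Proposition~\ref{rho I is admissible pair}; you handle this correctly.
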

\begin{proof}
Let $\omega:\mathcal{L}(X(E,\mathbf{N},\mathbf{S})) \to \mathcal{L}(X(F_I^1,\mathbf{N}^{F_I^1},\mathbf{S}^{F_I^1}))$ be the homomorphism in Proposition~\ref{omega is a homomorphism}. For $x \in C_c(F_I^1,\mathbf{N}^{F_I^1},\mathbf{S}^{F_I^1})$, and for $y,z \in C_c(E,\mathbf{N},\mathbf{S})$ such that $y_\alpha \vert_{\overline{N_\alpha \cap F_I^1}}=z_\alpha \vert_{\overline{N_\alpha \cap F_I^1}}=x_\alpha$ for all $\alpha \in \Lambda$. We have $\langle y-z,y-z\rangle_{C_0(E^0)} \in C_0(E^0 \setminus F_I^0)$. So $\pi(\langle y-z,y-z \rangle_{C_0(E^0)}) \in I$ and $\psi(y-z) \in I$. Hence there is a bounded linear map $\psi_I:X(F_I^1,\mathbf{N}^{F_I^1},\mathbf{S}^{F_I^1}) \to \mathcal{O}(E,\mathbf{N},\mathbf{S})/I$ such that $\psi_I(x)=\psi(y)+I$. For $f \in C_0(F_I^0)$, for extensions $g,h \in C_0(E^0)$ of $f$, we have $g-h \in C_0(E^0 \setminus F_I^0)$. So $\pi(g-h) \in I$. So there is a homomorphism $\pi_I:C_0(F_I^0) \to \mathcal{O}(E,\mathbf{N},\mathbf{S})/I$ such that $\pi_I(f)=\pi(g)+I$. It is straightforward to check that $(\psi_I,\pi_I)$ is an injective Toeplitz representation of $X(F_I^1,\mathbf{N}^{F_I^1},\mathbf{S}^{F_I^1})$ such that $C^*(\psi_I,\pi_I)=\mathcal{O}(E,\mathbf{N},\mathbf{S}) / I$.

Now we prove Equation~(\ref{Y_{psi_I}=Y_I}). We notice that $C_0((F_I^0)_{\mathrm{rg}} \setminus Y_I)=C_0(F_I^0 \setminus Z_I)$. Fix $f \in C_0(F_I^0 \setminus Z_I)$. Take an extension $\widetilde{f}$ of $f$ in $C_0(E^0)$. By definition of $Z_I$, we have $\pi(\widetilde{f})=\psi^{(1)}(K)+i$ for some $K \in \mathcal{K}(X(E,\mathbf{N},\mathbf{S}))$ and $i \in I$. Then $\pi_I(f)=\pi(\widetilde{f})+I=\psi_I^{(1)}(\omega(K))$. Since $\pi_I$ is injective, by \cite[Proposition~3.3]{Katsura:JFA04}, we have $\pi_I(f)=\psi_I^{(1)}(\phi(f))$. Conversely, fix $f \in C_0((F_I^0)_{\mathrm{rg}})$ with $\psi_I^{(1)}(\phi(f))=\pi_I(f)$. Then $\phi(f) \in \mathcal{K}(X(F_I^1,\mathbf{N}^{F_I^1},\mathbf{S}^{F_I^1}))$. By Proposition~\ref{omega maps K(X(E,N,S)) onto K(X(X^1,N^X^1,S^X^1))}, there exists $K \in \mathcal{K}(X(E,\mathbf{N},\mathbf{S}))$, such that $\omega(K)=\phi(f)$. Take an extension $\widetilde{f}$ of $f$ in $C_0(E^0)$. Then $\psi^{(1)}(K)+I=\pi(\widetilde{f})+I$. By definition of $Z_I$, we have $f(Z_I)=0$. So Equation~(\ref{Y_{psi_I}=Y_I}) holds.
\end{proof}

The following theorem generalizes \cite[Proposition~3.16]{Katsura:ETDS06}.

\begin{thm}\label{I(rho(I))=I if I is gauge-inv}
Let $I$ be a closed two-sided ideal of $\mathcal{O}(E,\mathbf{N},\mathbf{S})$ which is gauge-invariant. Then $I(\rho (I))= I$.
\end{thm}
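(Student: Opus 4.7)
The plan is to show the reverse inclusion $I \subset I(\rho(I))$ (the other direction is Proposition~\ref{J_rho I,I_rho I subset I}), by realising both $\mathcal{O}(E,\mathbf{N},\mathbf{S})/I$ and $\mathcal{O}(E,\mathbf{N},\mathbf{S})/I(\rho(I))$ as the twisted topological graph algebra $\mathcal{O}((F_I)_{Y_I},(\mathbf{N}^{F_I^1})_{Y_I},(\mathbf{S}^{F_I^1})_{Y_I})$ of the modified graph from Definition~\ref{define E_Y where Y is a closed subset of E_rg^0} with $Y_I := Z_I \cap (F_I^0)_{\mathrm{rg}}$, and then arguing that the canonical quotient between them is an isomorphism.

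First, Lemma~\ref{construct (psi_rho I,pi_rho I) from an ideal I of O(E,N,S)} produces the injective Toeplitz representation $(\psi_I,\pi_I)$ of $X(F_I^1,\mathbf{N}^{F_I^1},\mathbf{S}^{F_I^1})$ in $\mathcal{O}(E,\mathbf{N},\mathbf{S})/I$ satisfying equation~(\ref{Y_{psi_I}=Y_I}). Applying Proposition~\ref{tilde{psi},tilde{pi} is a cov Toep rep} to the topological graph $F_I$ and the closed set $Y_I \subset (F_I^0)_{\mathrm{rg}}$ promotes this to an injective covariant Toeplitz representation $(\widetilde{\psi_I},\widetilde{\pi_I})$ of $X((F_I)_{Y_I},(\mathbf{N}^{F_I^1})_{Y_I},(\mathbf{S}^{F_I^1})_{Y_I})$ whose generated $C^*$-algebra is $\mathcal{O}(E,\mathbf{N},\mathbf{S})/I$; the last sentence of that proposition, combined with equation~(\ref{Y_{psi_I}=Y_I}), ensures $\widetilde{\pi_I}$ is injective. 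The universal property therefore yields a surjective homomorphism
\[
\Phi_I : \mathcal{O}((F_I)_{Y_I},(\mathbf{N}^{F_I^1})_{Y_I},(\mathbf{S}^{F_I^1})_{Y_I}) \twoheadrightarrow \mathcal{O}(E,\mathbf{N},\mathbf{S})/I.
\]

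Because $I$ is gauge-invariant, $\mathcal{O}(E,\mathbf{N},\mathbf{S})/I$ carries an induced gauge action. The explicit formulas in Proposition~\ref{tilde{psi},tilde{pi} is a cov Toep rep} show that $\widetilde{\pi_I}$ lies in the fixed-point subalgebra and $\widetilde{\psi_I}$ is homogeneous of degree one, so $\Phi_I$ is gauge-equivariant. Combined with the injectivity of $\widetilde{\pi_I}$, the gauge-invariant uniqueness theorem for Cuntz--Pimsner algebras (\cite[Theorem~6.4]{Katsura:JFA04}) forces $\Phi_I$ to be an isomorphism. By Proposition~\ref{simplification of I_rho} the ideal $I(\rho(I))$ is also gauge-invariant, and by Theorem~\ref{rho I_rho=rho} we have $\rho(I(\rho(I))) = \rho(I)$; in particular $F_{I(\rho(I))}^0 = F_I^0$ and $Z_{I(\rho(I))} = Z_I$. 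Repeating the construction with $I(\rho(I))$ in place of $I$ yields an analogous isomorphism
\[
\Phi_{I(\rho(I))} : \mathcal{O}((F_I)_{Y_I},(\mathbf{N}^{F_I^1})_{Y_I},(\mathbf{S}^{F_I^1})_{Y_I}) \to \mathcal{O}(E,\mathbf{N},\mathbf{S})/I(\rho(I)).
\]

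Finally, $I(\rho(I)) \subset I$ supplies a canonical surjection $q : \mathcal{O}(E,\mathbf{N},\mathbf{S})/I(\rho(I)) \to \mathcal{O}(E,\mathbf{N},\mathbf{S})/I$. Unwinding the constructions of the two covariant representations shows that both are built from $(\psi,\pi)$ by the same formulas, formulas that depend only on the admissible pair $\rho(I)$ rather than on $I$ itself. Consequently $q \circ \Phi_{I(\rho(I))} = \Phi_I$ on generators, forcing $q$ to be an isomorphism and hence $I = I(\rho(I))$. The main delicacy is verifying this last commutation on generators, which amounts to tracing through Lemma~\ref{construct (psi_rho I,pi_rho I) from an ideal I of O(E,N,S)} and Proposition~\ref{tilde{psi},tilde{pi} is a cov Toep rep} to confirm that the decompositions of $C_0(E_Y^0)$ and $C_c(E_Y,\mathbf{N}_Y,\mathbf{S}_Y)$ used to define $\widetilde{\pi_I},\widetilde{\psi_I}$ descend compatibly along $q$.
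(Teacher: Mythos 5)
Your proposal is correct and follows essentially the same route as the paper's proof: both realise the two quotients as $\mathcal{O}((F_I)_{Y_I},(\mathbf{N}^{F_I^1})_{Y_I},(\mathbf{S}^{F_I^1})_{Y_I})$ via Lemma~\ref{construct (psi_rho I,pi_rho I) from an ideal I of O(E,N,S)}, Proposition~\ref{tilde{psi},tilde{pi} is a cov Toep rep} and the gauge-invariant uniqueness theorem, and then check on the generators $\psi(x)+I(\rho(I))$ and $\pi(f)+I(\rho(I))$ that the canonical surjection $q$ agrees with the composite isomorphism. The ``main delicacy'' you flag at the end is exactly the short computation the paper carries out explicitly in its final two displays.
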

\begin{proof}
By Proposition~\ref{J_rho I,I_rho I subset I}, $I(\rho(I)) \subset I$. So there is a well-defined quotient map $q: \mathcal{O}(E,\mathbf{N},\mathbf{S})/I(\rho(I)) \to \mathcal{O}(E,\mathbf{N},\mathbf{S})/I$. Lemma~\ref{construct (psi_rho I,pi_rho I) from an ideal I of O(E,N,S)} yields an injective Toeplitz representation $(\psi_I,\pi_I)$ of $X(F_I,\mathbf{N}^{F_I^1},\mathbf{S}^{F_I^1})$ in $\mathcal{O}(E,\mathbf{N},\mathbf{S}) / I$ such that $C^*(\psi_I,\pi_I)=\mathcal{O}(E,\mathbf{N},\mathbf{S}) / I$ and Equation~(\ref{Y_{psi_I}=Y_I}) holds. Proposition~\ref{tilde{psi},tilde{pi} is a cov Toep rep} gives an injective covariant Toeplitz representation $(\widetilde{\psi}_I,\widetilde{\pi}_I)$ of $X((F_I)_{Y_I},(\mathbf{N}^{F_I^1})_{Y_I},(\mathbf{S}^{F_I^1})_{Y_I})$ in $\mathcal{O}(E,\mathbf{N},\mathbf{S}) / I$ such that $\widetilde{\pi}_I \circ(p_I^0)_*=\pi_I, \widetilde{\psi}_I \circ(p_I^1)_*=\psi_I$, and $C^*(\widetilde{\psi}_I,\widetilde{\pi}_I)=\mathcal{O}(E,\mathbf{N},\mathbf{S}) / I$. So there is a surjective homomorphism $\varphi:\mathcal{O}((F_I)_{Y_I},(\mathbf{N}^{F_I^1})_{Y_I},(\mathbf{S}^{F_I^1})_{Y_I})\to\mathcal{O}(E,\mathbf{N},\mathbf{S}) / I$ such that $\varphi \circ t_I^0=\widetilde{\pi}_I$ and $\varphi\circ t_I^1=\widetilde{\psi}_I$. Hence $\varphi \circ t_I^0\circ (p_I^0)_*=\pi_I$ and $\varphi \circ t_I^1\circ (p_I^1)_*=\psi_I$. Since $I$ is gauge-invariant, there is a gauge action $\gamma_I:\mathbb{T} \to \mathrm{Aut}(\mathcal{O}(E,\mathbf{N},\mathbf{S}) / I)$ such that $\gamma_I(z)(a+I)=\gamma_z(a)+I$ for all $a \in \mathcal{O}(E,\mathbf{N},\mathbf{S})$. By the gauge-invariant uniqueness theorem, $\varphi$ is an isomorphism.

Let $(\psi_{I(\rho(I))},\pi_{I(\rho(I))})$ be an injective Toeplitz representation of $X(F_I,\mathbf{N}^{F_I^1},\mathbf{S}^{F_I^1})$ in the quotient $\mathcal{O}(E,\mathbf{N},\mathbf{S}) / {I(\rho(I))}$ obtained from Lemma~\ref{construct (psi_rho I,pi_rho I) from an ideal I of O(E,N,S)}. By Theorem~\ref{rho I_rho=rho}, $\rho(I(\rho(I)))=\rho(I)$. Since $I(\rho(I))$ is gauge-invariant by Proposition~\ref{simplification of I_rho}, we repeat the argument in the previous paragraph, then we obtain an isomorphism $\varphi':\mathcal{O}((F_I)_{Y_I},(\mathbf{N}^{F_I^1})_{Y_I},(\mathbf{S}^{F_I^1})_{Y_I})\to\mathcal{O}(E,\mathbf{N},\mathbf{S}) / I(\rho(I))$ such that $\varphi' \circ t_I^0\circ (p_I^0)_*=\pi_{I(\rho(I))}$ and $\varphi' \circ t_I^1\circ (p_I^1)_*=\psi_{I(\rho(I))}$. Hence $\varphi\circ\varphi'^{-1}$ is an isomorphism.

For $x \in C_c(E,\mathbf{N},\mathbf{S})$, we have
\begin{align*}
\varphi\circ\varphi'^{-1}(\psi(x)+I(\rho(I)))&=\varphi\circ\varphi'^{-1} \circ \psi_{I(\rho(I))} (x_\alpha \vert_{\overline{N_\alpha \cap F_I^1}})=\psi_I(x_\alpha \vert_{\overline{N_\alpha \cap F_I^1}})=\psi(x)+I.
\end{align*}
For $f \in C_0(E^0)$, we have
\begin{align*}
\varphi\circ\varphi'^{-1}(\pi(f)+I(\rho(I)))&=\varphi\circ\varphi'^{-1}\circ \pi_{I(\rho(I))}(f \vert_{F_I^0})=\pi_I(f \vert_{F_I^0})=\pi(f)+I.
\end{align*}
So $\varphi\circ\varphi'^{-1}=q$. Hence $I(\rho (I))= I$.
\end{proof}

\begin{thm}\label{one-to-one corr between adm pairs and gauge-inv ideals}
The map $\rho \to I(\rho)$ from the set of all admissible pairs of $E$ to the set of all gauge-invariant closed two-sided ideals of $\mathcal{O}(E,\mathbf{N},\mathbf{S})$ is a bijection with inverse $I \to \rho(I)$.
\end{thm}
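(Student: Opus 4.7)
The plan is to assemble this result directly from the earlier propositions and theorems in this section, as it essentially just repackages them. First I would verify that the two maps are well-defined between the claimed sets: Proposition~\ref{simplification of I_rho} shows that $I(\rho)$ is gauge-invariant whenever $\rho$ is an admissible pair, so $\rho \mapsto I(\rho)$ really does land in the set of gauge-invariant closed two-sided ideals; and Proposition~\ref{rho I is admissible pair} shows that $\rho(I)$ is always an admissible pair, so $I \mapsto \rho(I)$ really does land in the set of admissible pairs.

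Next I would check that the two compositions are identities. In one direction, Theorem~\ref{rho I_rho=rho} gives $\rho(I(\rho)) = \rho$ for every admissible pair $\rho$, so $I \mapsto \rho(I)$ is a left inverse of $\rho \mapsto I(\rho)$; in particular the latter map is injective. In the other direction, Theorem~\ref{I(rho(I))=I if I is gauge-inv} gives $I(\rho(I)) = I$ for every gauge-invariant closed two-sided ideal $I$, so $I \mapsto \rho(I)$ is also a right inverse. Together these show that $\rho \mapsto I(\rho)$ is a bijection with inverse $I \mapsto \rho(I)$.

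There is no real obstacle here, since all the substantive work has already been done in Theorems~\ref{rho I_rho=rho} and \ref{I(rho(I))=I if I is gauge-inv} and in Propositions~\ref{simplification of I_rho} and \ref{rho I is admissible pair}. The proof should therefore be no more than a few lines invoking these four results in sequence. If anything, the only thing worth emphasizing is that the restriction to gauge-invariant ideals on the codomain is necessary precisely because Proposition~\ref{J_rho I,I_rho I subset I} only yields the inclusion $I(\rho(I)) \subset I$ in general, with equality requiring the gauge-invariance hypothesis used via the gauge-invariant uniqueness theorem in the proof of Theorem~\ref{I(rho(I))=I if I is gauge-inv}.
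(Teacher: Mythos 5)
Your proposal is correct and follows exactly the same route as the paper, which simply cites Theorem~\ref{rho I_rho=rho} and Theorem~\ref{I(rho(I))=I if I is gauge-inv}; your additional remarks on well-definedness via Propositions~\ref{simplification of I_rho} and \ref{rho I is admissible pair} are accurate and only make the argument more explicit.
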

\begin{proof}
This is a direct consequence of Theorem~\ref{rho I_rho=rho} and Theorem~\ref{I(rho(I))=I if I is gauge-inv}.
\end{proof}

\section{Simplicity Conditions}

In our final section we show that the twisted topological graph algebra of a topological graph is simple  if and only if the ordinary topological graph algebra is simple. Our result generalizes \cite[Theorem~8.12]{Katsura:ETDS06}.

\begin{thm}\label{equ conditions of simple algebras}
Let $E$ be a topological graph, let $\mathbf{N}=\{N_\alpha\}_{\alpha \in \Lambda}$ be a cover of $E^1$ by precompact open $s$-sections, and let $\mathbf{S}=\{s_{\alpha\beta}\}_{\alpha,\beta \in \Lambda}$ be a $1$-cocycle relative to $\mathbf{N}$. Then the following conditions are equivalent:
\begin{enumerate}
\item\label{O(E,N,S) is simple} $\mathcal{O}(E,\mathbf{N},\mathbf{S})$ is simple;
\item\label{E is minimal and top free} $E$ is minimal (see \cite[Definition~8.8]{Katsura:ETDS06}) and topologically free;
\item\label{E is minimal and not gen by a cycle} $E$ is minimal and not generated by a cycle (see \cite[Definition~8.4]{Katsura:ETDS06});
\item\label{E is minimal and free} $E$ is minimal and free (see \cite[Definition~7.2]{Katsura:ETDS06});
\item\label{O(E) is simple} $\mathcal{O}(E)$ is simple.
\end{enumerate}
\end{thm}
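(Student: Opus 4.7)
The approach is to invoke Katsura's theorem \cite[Theorem~8.12]{Katsura:ETDS06}, which already establishes the equivalences among conditions (\ref{E is minimal and top free})--(\ref{O(E) is simple}) since each of these conditions is formulated purely in terms of the topological graph $E$ and does not involve the cocycle. It then suffices to prove (\ref{E is minimal and top free}) $\Leftrightarrow$ (\ref{O(E,N,S) is simple}).

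For (\ref{E is minimal and top free}) $\Rightarrow$ (\ref{O(E,N,S) is simple}), assume $E$ is minimal and topologically free, and suppose for contradiction that $I$ is a nonzero proper closed two-sided ideal of $\mathcal{O}(E,\mathbf{N},\mathbf{S})$, with quotient map $q$. The pair $(q \circ \psi, q \circ \pi)$ is still a covariant Toeplitz representation. If $q \circ \pi$ were injective, Theorem~\ref{twisted version of Cuntz-Krieger Uniqueness Theorem} (applicable because $E$ is topologically free) would identify $q$ with an isomorphism, contradicting $I \neq 0$. Hence $\pi^{-1}(I) \neq 0$, so the closed invariant subset $F_I^0$ of $E^0$ from Proposition~\ref{rho I is admissible pair} is proper. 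Minimality then forces $F_I^0 = \emptyset$, so $\pi(C_0(E^0)) \subset I$. Since $\psi(x \cdot e_\lambda) \to \psi(x)$ for a $C_0(E^0)$-approximate unit $(e_\lambda)$, this places every generator $\psi(x)$ inside $I$, forcing $I = \mathcal{O}(E,\mathbf{N},\mathbf{S})$, a contradiction.

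For (\ref{O(E,N,S) is simple}) $\Rightarrow$ (\ref{E is minimal and top free}), simplicity forces the only gauge-invariant closed two-sided ideals to be $\{0\}$ and the whole algebra, so by the bijection in Theorem~\ref{one-to-one corr between adm pairs and gauge-inv ideals} no admissible pair $(F^0,Z)$ with $F^0$ a proper nonempty closed invariant subset of $E^0$ can exist; this is exactly minimality of $E$. For topological freeness I argue contrapositively: if $E$ is minimal but not topologically free, Katsura's equivalence (\ref{E is minimal and top free}) $\Leftrightarrow$ (\ref{E is minimal and not gen by a cycle}) in \cite[Theorem~8.12]{Katsura:ETDS06} shows that $E$ is generated by a cycle. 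Such graphs are rigid enough that $E^1$ is essentially discrete, so $H^1(E^1,\mathcal{S})$ is trivial and every $1$-cocycle is cohomologous to the trivial one; by the remark following Example~\ref{Vasselli's eg}, the twisted graph correspondence is then isomorphic to the untwisted one, giving $\mathcal{O}(E,\mathbf{N},\mathbf{S}) \cong \mathcal{O}(E)$. But $\mathcal{O}(E)$ fails to be simple in this situation by Katsura's theorem, contradicting (\ref{O(E,N,S) is simple}).

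The main obstacle is the topological-freeness direction of (\ref{O(E,N,S) is simple}) $\Rightarrow$ (\ref{E is minimal and top free}), since simplicity of a \emph{twisted} algebra supplies, a priori, only gauge-invariant ideal information; one needs a bridge to non-gauge-invariant structure. The bridge used above exploits the rigidity of minimal graphs generated by a cycle: their combinatorial simplicity collapses $H^1$, forces the twist to be cohomologically trivial, and thus reduces the obstruction to the untwisted case. If that rigidity were unavailable, the alternative would be to construct an explicit nontrivial ideal in $\mathcal{O}(E,\mathbf{N},\mathbf{S})$ directly from the cycle (for instance via a surjection to a twisted version of $M_n(C(\mathbb{T}))$), but this seems less economical than the reduction sketched here.
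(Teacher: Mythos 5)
Your proposal is correct and follows essentially the same route as the paper: Katsura's Theorem~8.12 disposes of the equivalences (2)--(5), the Cuntz--Krieger uniqueness theorem combined with the admissible-pair/gauge-invariant-ideal machinery gives the equivalence of (1) with minimality plus topological freeness, and the decisive trick for the remaining direction --- that a minimal graph generated by a cycle has discrete vertex and edge spaces, so $H^1(E^1,\mathcal{S})$ is trivial, the cocycle is cohomologically trivial, and $\mathcal{O}(E,\mathbf{N},\mathbf{S}) \cong \mathcal{O}(E)$ --- is exactly the paper's argument. The only cosmetic differences are that in (2) $\Rightarrow$ (1) you split on whether $q \circ \pi$ is injective instead of reading injectivity off from $\rho(L)=(E^0,E^0_{\mathrm{sg}})$ via the bijection of Theorem~\ref{one-to-one corr between adm pairs and gauge-inv ideals}, and your phrase ``essentially discrete'' should be made precise as in the paper: $E^0$ is discrete by the definition of being generated by a cycle, hence $E^1=s^{-1}(E^0)$ is discrete as well.
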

\begin{proof}
Let $(\psi,\pi)$ be the injective universal covariant Toeplitz representation of $X(E,\mathbf{N},\mathbf{S})$ in $\mathcal{O}(E,\mathbf{N},\mathbf{S})$.

Firstly we prove $(\ref{E is minimal and top free}) \implies (\ref{O(E,N,S) is simple})$. Suppose that $E$ is minimal and topologically free. Fix a closed two-sided ideal $L$ of $\mathcal{O}(E,\mathbf{N},\mathbf{S})$ such that $L \neq \mathcal{O}(E,\mathbf{N},\mathbf{S})$. The minimality of $E$ implies that there are only two admissible pairs $(\emptyset,\emptyset)$ and $(E^0,E_{\mathrm{sg}}^0)$. By Theorem~\ref{one-to-one corr between adm pairs and gauge-inv ideals}, there is a bijection from the set of all admissible pairs of $E$ onto the set of all gauge-invariant closed two-sided ideal of $\mathcal{O}(E,\mathbf{N},\mathbf{S})$ such that $I(\emptyset,\emptyset)=\mathcal{O}(E,\mathbf{N},\mathbf{S})$ and $I(E^0,E_{\mathrm{sg}}^0)=\{0\}$. Since $I(\rho(L)) \subset L$ by Proposition~\ref{J_rho I,I_rho I subset I}, we deduce that $I(\rho(L))=\{0\}$. So $\rho(L)=(E^0,E_{\mathrm{sg}}^0)=:(F_L^0,Z_L)$. Define $\psi_L:X(E,\mathbf{N},\mathbf{S}) \to \mathcal{O}(E,\mathbf{N},\mathbf{S})/L$ by $\psi_L(x):=\psi(x)+L$, and define $\pi_L:C_0(E^0) \to \mathcal{O}(E,\mathbf{N},\mathbf{S})/L$ by $\pi_L(f):=\pi(f)+L$. Then $(\psi_L,\pi_L)$ is a covariant Toeplitz representation of $X(E,\mathbf{N},\mathbf{S})$. By Definition~\ref{defined rho(I)}, we have $\pi_L$ is injective. By the universal property of $(\psi,\pi)$, there is a homomorphism $h:\mathcal{O}(E,\mathbf{N},\mathbf{S}) \to \mathcal{O}(E,\mathbf{N},\mathbf{S})/L$ such that $h \circ \psi=\psi_L$ and $h \circ \pi=\pi_L$. So $h$ coincides with the quotient map $q:\mathcal{O}(E,\mathbf{N},\mathbf{S}) \to \mathcal{O}(E,\mathbf{N},\mathbf{S})/I$. Since $E$ is topologically free, by Theorem~\ref{twisted version of Cuntz-Krieger Uniqueness Theorem} (the Cuntz-Krieger uniqueness theorem), the quotient map $q$ is an isomorphism. Hence $L=\{0\}$. Therefore $\mathcal{O}(E,\mathbf{N},\mathbf{S})$ is simple.

Secondly we prove $(\ref{O(E,N,S) is simple}) \implies (\ref{E is minimal and not gen by a cycle})$. Suppose that $\mathcal{O}(E,\mathbf{N},\mathbf{S})$ is simple. By Theorem~\ref{one-to-one corr between adm pairs and gauge-inv ideals}, there are only two admissible pairs $(\emptyset,\emptyset)$ and $(E^0,E_{\mathrm{sg}}^0)$. So $E$ is minimal. Suppose that $E$ is generated by a cycle, for a contradiction. Then $E^0$ is discrete by \cite[Definition~8.4]{Katsura:ETDS06}. So $E^1$ is also discrete and $H^1(E^1,\mathcal{S})=\{0\}$. By \cite[Theorem~3.3.3]{HLiPhDThesis}, $X(E,\mathbf{N},\mathbf{S}) \cong X(E)$. So $\mathcal{O}(E,\mathbf{N},\mathbf{S}) \cong \mathcal{O}(E)$. Hence $\mathcal{O}(E)$ is simple. By \cite[Theorem~8.12]{Katsura:ETDS06} $E$ is not generated by a cycle, which is a contradiction. Therefore $E$ is not generated by a cycle.

The implication $(\ref{E is minimal and not gen by a cycle}) \implies (\ref{E is minimal and top free})$ follows from \cite[Theorem~8.12]{Katsura:ETDS06}. So $(\ref{O(E,N,S) is simple}) \iff (\ref{E is minimal and top free}) \iff (\ref{E is minimal and not gen by a cycle})$. Again by \cite[Theorem~8.12]{Katsura:ETDS06}, we have $(\ref{E is minimal and not gen by a cycle}) \iff (\ref{E is minimal and free}) \iff (\ref{O(E) is simple})$.
\end{proof}

\begin{rmk}
Theorem~\ref{equ conditions of simple algebras} tells us that the twisted topological graph algebra $\mathcal{O}(E,\mathbf{N},\mathbf{S})$ is simple if and only if $\mathcal{O}(E)$ is simple. So our $1$-cocycle twisting data does not affect the simplicity of the original topological graph algebra at all.
\end{rmk}

\section*{Acknowledgments}
The author would like to thank the University of Wollongong and his PhD supervisors Professor David Pask and Professor Aidan Sims for supporting this research. The author in particular wants to thank Professor David Pask and Professor Aidan Sims for spending much time to share their ideas and taking lots of effort to read many versions of the author's PhD thesis and drafts of this paper.

\end{document}